\newtheorem{theorem}{Theorem}[section]
\newtheorem{corollary}[theorem]{Corollary}
\newtheorem{lemma}[theorem]{Lemma}
\newtheorem{proposition}[theorem]{Proposition}
\newtheorem{remark}[theorem]{Remark}
\def\RR{{\mathbb{R}}}
\def\11{\textbf{$1$}}
\begin{document}

\title[The Mazur-Ulam property for commutative von Neumann algebras]{The Mazur-Ulam property for commutative von Neumann algebras}

\author[A.M. Peralta, M. Cueto-Avellaneda]{Antonio M. Peralta, Mar{\'i}a Cueto-Avellaneda}

\address[A.M. Peralta, M. Cueto-Avellaneda]{Departamento de An{\'a}lisis Matem{\'a}tico, Facultad de
Ciencias, Universidad de Granada, 18071 Granada, Spain.}
\email{aperalta@ugr.es, mcueto@ugr.es}


\subjclass[2010]{Primary 47B49, Secondary 46A22, 46B20, 46B04, 46A16, 46E40.}

\keywords{Tingley's problem; Mazur-Ulam property; extension of isometries; $C(K)$; commutative von Neumann algebras, $L^{\infty} (\Omega,\mu)$.}

\date{}

\begin{abstract} Let $(\Omega,\mu)$ be a $\sigma$-finite measure space. Given a Banach space $X$, let the symbol $S(X)$ stand for the unit sphere of $X$. We prove that the space $L^{\infty} (\Omega,\mu)$ of all complex-valued measurable essentially bounded functions equipped with the essential supremum norm, satisfies the Mazur-Ulam property, that is, if $X$ is any complex Banach space, every surjective isometry $\Delta: S(L^{\infty} (\Omega,\mu))\to S(X)$ admits an extension to a surjective real linear isometry $T: L^{\infty} (\Omega,\mu)\to X$. This conclusion is derived from a more general statement which assures that every surjective isometry $\Delta : S(C(K))\to S(X),$ where $K$ is a Stonean space, admits an extension to a surjective real linear isometry from $C(K)$ onto $X$.
\end{abstract}

\maketitle
\thispagestyle{empty}

\section{Introduction}
A Banach space $X$ satisfies the \emph{Mazur-Ulam property} if for any Banach space $Y$, every surjective isometry $\Delta: S(X)\to S(Y)$ admits an extension to a surjective real linear isometry from $X$ onto $Y$, where $S(X)$ and $S(Y)$ denote the unit spheres of $X$ and $Y$, respectively. An equivalent reformulation tells that $X$ satisfies the Mazur-Ulam property if the so-called Tingley's problem admits a positive solution for every surjective isometry from $S(X)$ onto the unit sphere of any Banach space $Y$. Positive solutions to Tingley's problem have been established when $X$ and $Y$ are sequence spaces \cite{Ding2002,Di:p,Di:8,Di:1}, $L^{p}(\Omega, \Sigma, \mu)$ spaces with $1\leq p\leq \infty$ \cite{Ta:8, Ta:1, Ta:p}, $C(K)$ spaces \cite{Wang}, spaces of compact operators on complex Hilbert spaces and compact C$^*$-algebras \cite{PeTan16}, spaces of bounded linear operators on complex Hilbert spaces, atomic von Neumann algebras and JBW$^*$-triples \cite{FerPe17b,FerPe17c}, general von Neumann algebras \cite{FerPe17d}, spaces of trace class operators \cite{FerGarPeVill17}, preduals of von Neumann algebras \cite{Mori2017}, and spaces of $p$-Schatten von Neumann operators on a complex Hilbert space (with $2<p<\infty$) \cite{FerJorPer2018}.  We refer to the surveys \cite{Ding2009,YangZhao2014,Pe2018} for a detailed overview on Tingley's problem.\smallskip

Our knowledge on the class of Banach spaces satisfying the Mazur-Ulam property is a bit more reduced. This class includes the space $c_0(\Gamma,\mathbb{R})$ of real null sequences, and the space $\ell_{\infty}(\Gamma,\mathbb{R})$ of all bounded real-valued functions on a discrete set $\Gamma$ (see \cite[Corollary 2]{Ding07}, \cite[Main Theorem]{Liu2007}), the space $C(K,\mathbb{R})$ of all real-valued continuous functions on a compact Hausdorff space $K$ \cite[Corollary 6]{Liu2007}, and the spaces $L^{p}((\Omega, \mu), \mathbb{R})$ of real-valued measurable functions on an arbitrary $\sigma$-finite measure space $(\Omega, \mu)$ for all $1\leq p\leq \infty$ \cite{Ta:1,Ta:8,Ta:p}. For some time the study of those Banach spaces satisfying the Mazur-Ulam property was restricted to real Banach spaces. The existence of real linear surjective isometries which are not complex linear nor conjugate linear was a serious obstacle. Two recent contributions initiate the study of the Mazur-Ulam property in the setting of complex Banach spaces. Let $\Gamma$ be an infinite set, then the space of complex null sequences $c_0(\Gamma)$ satisfies the Mazur-Ulam property (see \cite{JVMorPeRa2017}). The space $\ell_{\infty}(\Gamma)$ of all complex-valued bounded functions on $\Gamma$ also satisfies the Mazur-Ulam property \cite{Pe2017}.\smallskip

In \cite{THL}, D. Tan, X. Huang, and R. Liu  introduce the notions of \emph{generalized lush} (GL) spaces and local-GL-spaces in the study of the Muzar-Ulam property by showing that every local-GL-space satisfies this property. Among the consequences of this, it is established that if $E$ is a local-GL-space and $K$ is a compact Hausdorff space, then $C(K,E)$ has the Mazur-Ulam property (see \cite[Proposition 3.11]{THL}). It should be observed that every CL-space in the sense of Fullerton \cite{Full61}, and every almost-CL-space in the sense employed by Lima in \cite{Lim77} is a GL-space. Let us briefly recall that a Banach space $X$ is a generalized lush space if for every $x\in S(X)$ and every $0<\varepsilon<1$ there exists a slice $S = S(\varphi,\varepsilon) = \{z\in X : \|z\|\leq 1, \Re\hbox{e}\varphi(z)>1-\varepsilon\}$ (with $\varphi \in S(X^*)$) such that $x\in S$ and $$\hbox{dist}(y,S) + \hbox{dist}(y,-S) < 2+\varepsilon,$$ for all $y\in S(X)$. It is not hard to check that $\mathbb{C}$ is not a (local-)GL-space. Therefore, the result established by Tan, Huang, and Liu in \cite[Proposition 3.11]{THL} does not throw any new light for the space $C(K)$ of all complex-valued functions on a compact Hausdorff space $K$.\smallskip

The space $L^{\infty}(\Omega, \mu)$ of complex-valued, measurable, essentially-bounded functions on an arbitrary $\sigma$-finite measure space $(\Omega, \mu)$ is beyond from our current knowledge on the class of complex Banach spaces satisfying the Mazur-Ulam property. This paper is devoted to fill this gap and clear our doubts.\smallskip

The natural path is to explore the interesting proof provided by D. Tan in the case of  $L^{\infty}(\Omega, \mu, \mathbb{R})$ in \cite{Ta:8}. A detailed checkup of the arguments in \cite{Ta:8} should convince the reader that those arguments are optimized for the real setting and it is hopeless to deal with complex scalars with the tools in \cite{Ta:8}. To avoid difficulties we extend our study to a wider setting of complex Banach spaces, the space of all complex-valued continuous functions on a Stonean space.\smallskip

Let $K$ be a compact Hausdorff space. We recall that $K$ is called \emph{Stonean} or \emph{extremally disconnected} if the closure of every open set in $K$ is open. It is known that if $K$ is a Stonean space, then every element $a$ in
the C$^*$-algebra $C(K),$ of all continuous complex-valued functions on $K$, can be uniformly approximated by finite linear combinations of projections (see \cite[Proposition 1.3.1]{S}). This topological notion has a straight connection with the property of being \emph{monotone complete}. More concretely, let $K$ be a compact Hausdorff space, then every bounded increasing directed set of real-valued non-negative functions $(f_{\alpha})$ in $C(K)$ has a least upper bound in $C(K)$ if and only if $K$ is Stonean (cf. \cite{Stone49} and \cite{Dix51} or \cite[Proposition 1.3.2]{S}, \cite[Proposition III.1.7]{Tak}). Let us mention, by the way, that a reader interested on a systematic comprehensive insight into the bewildering variety of monotone complete C$^*$-algebras beyond von Neumann algebras and commutative AW$^*$-algebras can consult the recent monograph \cite{SaiWri2015} by K. Sait\^{o} and J.M.D. Wright.\smallskip

The C$^*$-algebra $C(K)$ is a dual Banach space (equivalently, a von Neumann algebra) if and only if $K$ is hyper-Stonean (cf. \cite{Dix51}). We recall that a Stonean space $K$ is said to be hyper-Stonean if it admits a faithful family of positive normal measures (cf. \cite[Definition 1.14]{Tak}). \smallskip

Following standard terminology, a \emph{localizable measure space} $(\Omega, \nu)$ is a measure space which can be obtained as a direct sum of finite measure spaces $\{(\Omega_i,\mu_i) : i\in \mathcal{I}\}$. The Banach space $L^{\infty} (\Omega,\nu)$ of all essentially bounded locally $\nu$-measurable functions on $\Omega$ is a dual Banach space and a commutative von Neumann algebra. Actually, every commutative von Neumann algebra is C$^*$-isomorphic and isometric to some $L^{\infty} (\Omega,\nu)$ for some localizable measure space $(\Omega, \nu)$ (see \cite[Proposition 1.18.1]{S}). From the point of view of Functional Analysis, the commutative von Neumann algebras $L^{\infty} (\Omega,\nu)$ and $C(K)$ with $K$ hyper-Stonean are isometrically equivalent.\smallskip

In this paper we establish that if $K$ is a Stonean space, $X$ is an arbitrary complex Banach space, and $\Delta : S(C(K))\to S(X)$ is a surjective isometry, then there exist two disjoint clopen subsets $K_1$ and $K_2$ of $K$ such that $K = K_1 \cup K_2$
satisfying that if $K_1$ {\rm(}respectively, $K_2${\rm)} is non-empty then there exist a closed subspace $X_1$ {\rm(}respectively, $X_2${\rm)} of $X$ and a complex linear {\rm(}respectively, conjugate linear{\rm)} surjective isometry $T_1 : C(K_1) \to X_1$ {\rm(}respectively, $T_2 : C(K_2) \to X_2${\rm)} such that $X = X_1\oplus^{\infty} X_2$, and $\Delta (a) = T_1(\pi_1(a)) +  T_2(\pi_2(a))$ for every $a\in S(C(K))$, where $\pi_j$ is the natural projection of $C(K)$ onto $C(K_j)$ given by $\pi_j (a) = a|_{K_j}$. In particular, $\Delta$ admits an extension to a surjective real linear isometry from $C(K)$ onto $X$ (see Theorem \ref{t Tingley CK}).\smallskip

Let $(\Omega,\mu)$ be a $\sigma$-finite measure space, and let $X$ be a complex Banach space. A consequence of our main result shows that for every surjective isometry  $\Delta: S(L^{\infty} (\Omega,\mu))\to S(X)$, there exists a surjective real linear isometry $T: L^{\infty} (\Omega,\mu)\to X$ whose restriction to $S(L^{\infty} (\Omega,\mu))$ is $\Delta$ (see Theorem \ref{t Tingley Tan}).\smallskip

We finish this note with a discussion on the chances of extending a surjective isometry between the sets of extreme points of two Banach spaces.

\section{Geometric properties for general compact Hausdorff spaces}

In this section we shall gather a collection of results which are motivated by previous contributions in \cite{Wang,Ding07,FangWang06,Liu2007,Ta:8,JVMorPeRa2017} and \cite{Pe2017}.\smallskip

Henceforth, given a Banach space $X$, the symbol $\mathcal{B}_{X}$ will denote the closed unit ball of $X$.\smallskip

Let us consider a compact Hausdorff space $K$ and the C$^*$-algebra $C(K)$. For each $t_0\in K$ and each $\lambda\in \mathbb{T}$ we set $$A(t_0,\lambda):=\{ f\in S(C(K)) : f(t_0) = \lambda\},$$ where $\mathbb{T}$ denotes the unit sphere of $\mathbb{C}$. Then $A(t_0,\lambda)$ is a maximal norm-closed proper face of $\mathcal{B}_{C(K)}$ and a maximal convex subset of $S(C(K))$. As in previous papers, we consider a special subset of $A(t_0,\lambda)$ defined by $$\hbox{Pick}(t_0,\lambda) :=\{ f\in S(C(K)) : f(t_0) = \lambda, \hbox{ and } |f(t)|<1, \ \forall t\neq t_0\}.$$ It is known that in a compact metric space the set $\hbox{Pick}(t_0,\lambda)$ is non-empty for every $t_0\in K$. The same statement is actually true whenever $K$ is a first countable compact Hausdorff space (see \cite[proof of Theorem 2.2]{MolZa99}).\smallskip

Similar arguments to those given in \cite[Lemma 2.1]{JVMorPeRa2017} can be applied to establish our first result.

\begin{lemma}\label{l existence of support functionals for the image of a face} Let $\Delta : S(C(K))\to S(X)$ be a surjective isometry, where $K$ is a compact Hausdorff space and $X$ is a complex Banach space. Then for each $t_0\in K$ and each $\lambda\in \mathbb{T}$ the set $$\hbox{supp}(t_0,\lambda) := \{\varphi\in X^* : \|\varphi\|=1,\hbox{ and } \varphi^{-1} (\{1\})\cap \mathcal{B}_{X} = \Delta(A(t_0,\lambda)) \}$$ is a non-empty weak$^*$-closed face of $\mathcal{B}_{X^*}$.
\end{lemma}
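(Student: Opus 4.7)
My plan follows the template of \cite[Lemma 2.1]{JVMorPeRa2017}. The key input is the classical observation that $A(t_0,\lambda)$ is a \emph{maximal} convex subset of $S(C(K))$, together with the folkloric lemma (of Cheng--Dong / Mankiewicz type) stating that any surjective isometry between two unit spheres sends maximal convex subsets of the sphere to maximal convex subsets of the sphere. Granting this, $\Delta(A(t_0,\lambda))$ is a maximal convex subset of $S(X)$.

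For non-emptiness, Hahn--Banach separation of this convex set from $0$ produces $\varphi\in\mathcal{B}_{X^*}$ with $\mathrm{Re}\,\varphi\equiv 1$ on $\Delta(A(t_0,\lambda))$; on elements of the sphere the bound $|\varphi|\leq 1$ then forces $\varphi$ itself to equal $1$ there, so $\|\varphi\|=1$. The face $\varphi^{-1}(\{1\})\cap\mathcal{B}_X$ is a convex subset of $S(X)$ containing the maximal convex set $\Delta(A(t_0,\lambda))$, hence equals it, giving $\varphi\in\mathrm{supp}(t_0,\lambda)$.

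Weak$^*$-closedness is then immediate: if $\varphi_\alpha\to\varphi$ in the weak$^*$-topology with $\varphi_\alpha\in\mathrm{supp}(t_0,\lambda)$, then $\varphi(x)=\lim_\alpha\varphi_\alpha(x)=1$ for every $x\in\Delta(A(t_0,\lambda))$, which forces $\|\varphi\|=1$, and the same maximality argument shows $\varphi^{-1}(\{1\})\cap\mathcal{B}_X=\Delta(A(t_0,\lambda))$. For the face property, suppose $\varphi=t\varphi_1+(1-t)\varphi_2$ with $\varphi_j\in\mathcal{B}_{X^*}$, $t\in(0,1)$, and $\varphi\in\mathrm{supp}(t_0,\lambda)$; for every $x\in\Delta(A(t_0,\lambda))$ the identity $1=t\varphi_1(x)+(1-t)\varphi_2(x)$ combined with $|\varphi_j(x)|\leq 1$ forces $\varphi_j(x)=1$, so both $\varphi_j$ have norm one and their norm-attaining sets contain the maximal $\Delta(A(t_0,\lambda))$; maximality once more yields $\varphi_j\in\mathrm{supp}(t_0,\lambda)$.

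The only genuinely load-bearing step is the transfer of maximality through $\Delta$ in the first paragraph; the rest is routine Hahn--Banach separation, weak$^*$-lower semicontinuity of the norm, and the elementary convexity trick that a proper convex combination of numbers in the closed unit disc equalling $1$ forces each summand to equal $1$.
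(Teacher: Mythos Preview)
Your argument is correct and follows the same route as the paper: invoke the Cheng--Dong lemma (the paper cites \cite[Lemma 5.1(ii)]{ChenDong2011}) to transfer maximality of $A(t_0,\lambda)$ through $\Delta$, then apply Hahn--Banach/Eidelheit separation to obtain a supporting functional, with the weak$^*$-closedness and face checks left as the routine verifications the paper omits. One small imprecision worth tightening: the separation should be of $\Delta(A(t_0,\lambda))$ from the \emph{open unit ball} rather than from the single point $0$---separating from $0$ alone only yields $\mathrm{Re}\,\varphi>0$ on the set, not $\mathrm{Re}\,\varphi\equiv 1$---but once you separate from $B_X^\circ$ and normalize, the argument goes through exactly as you describe (this is precisely the Eidelheit step the paper invokes via \cite[Theorem 2.2.26]{Megg98}).
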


\begin{proof} Since $A(t_0,\lambda)$ is a maximal convex subset of $S(C(K))$, we deduce from \cite[Lemma 5.1$(ii)$]{ChenDong2011} (see also \cite[Lemma 3.5]{Tan2014}) that $\Delta(A(t_0,\lambda)) $ is a maximal convex subset of $X$. Thus, by Eidelheit's separation Theorem \cite[Theorem 2.2.26]{Megg98} there is a norm-one functional $\varphi\in X^*$ such that $\varphi^{-1} (\{1\})\cap \mathcal{B}_{X} = \Delta(A(t_0,\lambda))$ (compare the proof of \cite[Lemma 3.3]{Tan2016}). The rest can be straightforwardly checked by the reader.
\end{proof}

Our next lemma was essentially shown in \cite[Lemmas 3.1 and 3.5]{FangWang06}, \cite[Lemma 2.4]{Ta:8} and \cite[Lemma 2.2]{JVMorPeRa2017}. We include an sketch of the proof for completeness. We recall first that given a norm-one element $x$ in a Banach space $X$, the \emph{star-like subset of $S(X)$ around $x$}, St$(x)$, is the set given by $$\hbox{St} (x) :=\{y\in S(X) : \|x+ y \|=2\}.$$ It is known that St$(x)$ is precisely the union of all maximal convex subsets of $S(X)$ containing $x,$ moreover, $$\hbox{St}(x)= \{y\in X : [x,y]=\{t x + (1-t) y : t\in [0,1]\}\subseteq S(X) \}.$$

\begin{lemma}\label{l 3.5 FangWang supports}{\rm\cite[Lemmas 3.1 and 3.5]{FangWang06}, \cite[Lemma 2.2]{JVMorPeRa2017}} Suppose $K$ is a first countable compact Hausdorff space, where $X$ is a complex Banach space. Let $\Delta : S(C(K))\to S(X)$ be a surjective isometry. Then for each $t_0$ in $K$ and each $\lambda\in \mathbb{T}$ we have $\varphi \Delta(f) = -1$, for every $f$ in $A(t_0,-\lambda)$ and every $\varphi \in \hbox{supp}(t_0,\lambda)$.
\end{lemma}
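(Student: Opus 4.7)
The plan is to first establish the target equality $\varphi\Delta(f)=-1$ for every $f$ in the smaller subset $\hbox{Pick}(t_0,-\lambda)$, and then propagate the conclusion to the whole face $A(t_0,-\lambda)$ by an approximation argument exploiting the first countability of $K$ at $t_0$.

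For the key first step, I would fix $f\in\hbox{Pick}(t_0,-\lambda)$ and invoke the first countability of $K$ to select an auxiliary $h\in\hbox{Pick}(t_0,\lambda)$. A direct pointwise inspection yields two facts: (i) $\|f-h\|_{\infty}=2$, the maximum being attained only at $t_0$, since $|f(t_0)-h(t_0)|=|-\lambda-\lambda|=2$ while $|f(t)-h(t)|\le |f(t)|+|h(t)|<2$ for $t\neq t_0$; and (ii) $\hbox{St}(h)=A(t_0,\lambda)$, because the supremum in $\|y+h\|_{\infty}=2$ can only be attained where $|h(t)|=1$, which by the Pick hypothesis forces $t=t_0$ and $y(t_0)=h(t_0)=\lambda$. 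From (i) and the isometry property of $\Delta$ one rewrites $\|\Delta(h)+(-\Delta(f))\|=2$, that is, $-\Delta(f)\in\hbox{St}(\Delta(h))$. From (ii), since $\hbox{St}(h)$ equals the union of all maximal convex subsets of $S(C(K))$ containing $h$, the set $A(t_0,\lambda)$ is the unique such maximal convex subset (any other would be contained in $A(t_0,\lambda)$ and then coincide with it by maximality). Using that $\Delta$ induces a bijection between the families of maximal convex subsets of $S(C(K))$ and of $S(X)$ (as invoked in the proof of Lemma \ref{l existence of support functionals for the image of a face}), $\Delta(A(t_0,\lambda))$ must then be the unique maximal convex subset of $S(X)$ containing $\Delta(h)$, which gives $\hbox{St}(\Delta(h))=\Delta(A(t_0,\lambda))=\varphi^{-1}(\{1\})\cap\mathcal{B}_{X}$. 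Combining these facts yields $\varphi(-\Delta(f))=1$, i.e.\ $\varphi\Delta(f)=-1$.

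To finish, I would extend this to an arbitrary $f\in A(t_0,-\lambda)$ by approximation. The first countability of $K$ at $t_0$ makes $\{t_0\}$ a closed $G_{\delta}$ subset of the normal space $K$, so by writing $\{t_0\}$ as a countable intersection of open neighborhoods and summing Urysohn functions one obtains a continuous $\rho:K\to[0,1]$ with $\rho^{-1}(\{0\})=\{t_0\}$. For $\varepsilon\in(0,1)$ set $f_{\varepsilon}(t):=(1-\varepsilon\rho(t))f(t)$; a routine check confirms $f_{\varepsilon}\in\hbox{Pick}(t_0,-\lambda)$ and $\|f_{\varepsilon}-f\|_{\infty}\le\varepsilon$. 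The first step supplies $\varphi\Delta(f_{\varepsilon})=-1$, and letting $\varepsilon\to 0^{+}$ together with the continuity of $\Delta$ and $\varphi$ produces $\varphi\Delta(f)=-1$, as required.

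The main technical point is the identification $\hbox{St}(\Delta(h))=\Delta(A(t_0,\lambda))$ in the first step, and this is precisely where the Pick hypothesis on the auxiliary $h$ becomes indispensable: it forces $\Delta(h)$ to lie in a single maximal face of $S(X)$, thereby converting the abstract distance-$2$ condition $-\Delta(f)\in\hbox{St}(\Delta(h))$ into genuine membership in the specific face $\Delta(A(t_0,\lambda))$.
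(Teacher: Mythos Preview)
Your proof is correct and follows essentially the same strategy as the paper: pick an auxiliary $h\in\hbox{Pick}(t_0,\lambda)$, observe that $-\Delta(f)\in\hbox{St}(\Delta(h))$, and then identify $\hbox{St}(\Delta(h))=\Delta(A(t_0,\lambda))$. Two small remarks on how your execution differs from the paper's.

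First, your detour through $\hbox{Pick}(t_0,-\lambda)$ and the subsequent approximation step are unnecessary. In your own first step you only use that $f(t_0)=-\lambda$ (to get $\|f-h\|=2$); the additional Pick hypothesis on $f$ is never invoked. Hence the argument already covers every $f\in A(t_0,-\lambda)$ directly, exactly as the paper does, and the second paragraph can be dropped. (Your approximation is correct as written, just redundant.)

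Second, for the identification $\hbox{St}(\Delta(h))=\Delta(A(t_0,\lambda))$, the paper invokes \cite[Corollary 2.2]{FangWang06}, which states that a surjective isometry between unit spheres preserves the relation $\|x+y\|=2$; this immediately yields $\hbox{St}(\Delta(h))=\Delta(\hbox{St}(h))=\Delta(A(t_0,\lambda))$. Your route via ``$h$ lies in a unique maximal convex subset, and $\Delta$ bijects maximal convex subsets'' is a legitimate alternative that reaches the same conclusion with slightly more structural bookkeeping.
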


\begin{proof} Let us take $f\in A(t_0,-\lambda)$ and  $\varphi \in \hbox{supp}(t_0,\lambda)$. We can always pick $g_0$ in $\hbox{Pick}(t_0,\lambda)$ (here we need the hypothesis assuring that $K$ is a first countable compact Hausdorff space). Clearly $$\|\Delta(f)-\Delta(g_0) \| = \|f-g_0 \|=2,$$ and hence $- \Delta(f)\in \hbox{St} (\Delta(g_0)).$\smallskip

By mimicking the proof in \cite[Lemma 3.1]{FangWang06} we can show that $\hbox{St} (\Delta(g_0)) $ $= $ $\Delta (A(t_0,\lambda))$. Explicitly speaking, $z\in \hbox{St} (\Delta(g_0))$ if and only if $\| z + \Delta(g_0)\| = 2$. Applying \cite[Corollary 2.2]{FangWang06} we have $\| z + \Delta(g_0)\| = 2 \Leftrightarrow \|\Delta^{-1} (z) + g_0\|=2 \Leftrightarrow \Delta^{-1} (z) \in \hbox{St} (g_0)= A(t_0,\lambda).$ This shows that $- \Delta(f)\in \hbox{St} (\Delta(g_0))= \Delta (A(t_0,\lambda)),$ and hence $$- \varphi ( \Delta(f))= \varphi (- \Delta(f))= 1.$$
\end{proof}

We shall need an appropriate version of the above result in which $K$ is replaced with a compact Hausdorff space. We begin with a technical consequence of the parallelogram law.

\begin{lemma}\label{l parallelogram law} Let $\lambda_1,\lambda_2$ be two different numbers in $\mathbb{T}$. Then for every $0<\rho<\hbox{dist} (\lambda_1,[0,1]\lambda_2)$ we have $| \alpha + \beta| < \sqrt{4 - (\hbox{dist} (\lambda_1,[0,1]\lambda_2) - \rho)^2}<2,$ for every $\alpha\in \mathcal{B}_{\mathbb{C}}$ with $|\alpha -\lambda_1|<\rho$ and every $\beta \in [0,1]\lambda_2$.
\end{lemma}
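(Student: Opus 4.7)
The plan is to combine the parallelogram identity in $\mathbb{C}$ with the reverse triangle inequality. Note that the hypotheses $\alpha\in \mathcal{B}_{\mathbb{C}}$ and $\beta\in [0,1]\lambda_2$ (so $|\beta|\leq 1$) put both points in the closed unit ball. Writing $d := \hbox{dist}(\lambda_1,[0,1]\lambda_2)$, the assumption $\lambda_1\neq \lambda_2$ together with $|\lambda_1|=1$ guarantees $d>0$, so the constraint $0<\rho<d$ is meaningful, and $(d-\rho)^2>0$ immediately yields the desired strict inequality $\sqrt{4-(d-\rho)^2}<2$.

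For the main inequality, I would first apply the parallelogram law in $\mathbb{C}$ to the pair $\alpha,\beta\in \mathcal{B}_{\mathbb{C}}$:
\begin{equation*}
|\alpha+\beta|^2 + |\alpha-\beta|^2 \;=\; 2\bigl(|\alpha|^2+|\beta|^2\bigr) \;\leq\; 4.
\end{equation*}
Rearranging gives $|\alpha+\beta|^2 \leq 4 - |\alpha-\beta|^2$, so it suffices to produce a strict lower bound for $|\alpha-\beta|$.

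For that lower bound, I would use the reverse triangle inequality and the definition of $d$: since $\beta\in [0,1]\lambda_2$ we have $|\lambda_1-\beta|\geq d$, and by hypothesis $|\alpha-\lambda_1|<\rho$, whence
\begin{equation*}
|\alpha-\beta| \;\geq\; |\lambda_1-\beta| - |\alpha-\lambda_1| \;>\; d-\rho \;>\; 0.
\end{equation*}
Squaring (which preserves the strict inequality because both sides are positive) and inserting into the parallelogram bound gives $|\alpha+\beta|^2 < 4 - (d-\rho)^2$, and taking square roots produces the stated estimate.

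There is no real obstacle here: the statement is a quantitative refinement of the classical fact that if two points of the unit ball are uniformly separated, then their midpoint is uniformly bounded away from the sphere. The only subtlety worth flagging is that the intermediate inequality $|\alpha-\beta|>d-\rho$ is strict (thanks to $|\alpha-\lambda_1|<\rho$ being strict), which is what ultimately yields the strict inequality $|\alpha+\beta|<\sqrt{4-(d-\rho)^2}$ rather than merely a non-strict one.
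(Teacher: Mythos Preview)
Your proof is correct and follows essentially the same route as the paper: both arguments apply the parallelogram identity to bound $|\alpha+\beta|^2$ by $4-|\alpha-\beta|^2$, and then obtain the strict lower bound $|\alpha-\beta|>d-\rho$ via the triangle inequality. Your write-up is slightly more explicit about why $d>0$ and where the strictness enters, but the substance is identical.
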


\begin{proof} Let us denote $\theta = \hbox{dist} (\lambda_1,[0,1]\lambda_2) >0,$ and take any $0<\rho < \theta$. It is standard to check that $|\alpha -\beta| > \theta - \rho >0.$ By the parallelogram law we have $$ |\alpha +\beta|^2 +|\alpha -\beta|^2 = 2(|\alpha|^2+ |\beta|^2) \leq 4, $$ and thus $$ |\alpha +\beta|  \leq \sqrt{4 -  |\alpha -\beta|^2} < \sqrt{4 - (\theta - \rho)^2}<2.$$
\end{proof}

The extension of Lemma \ref{l 3.5 FangWang supports} for general compact Hausdorff spaces can be stated now.

\begin{lemma}\label{l 3.5 FangWang supports for general compact Hausdorff spaces} Suppose $K$ is a compact Hausdorff space. Let $\Delta : S(C(K))\to S(X)$ be a surjective isometry, where $X$ is a complex Banach space. Then for each $t_0$ in $K$ and each $\lambda\in \mathbb{T}$ we have $$\hbox{$\varphi \Delta(f) = -1,$ for every $f$ in $A(t_0,-\lambda)$ and every $\varphi \in \hbox{supp}(t_0,\lambda)$.}$$
Consequently, $\hbox{supp}(t_0,-\lambda) = -\hbox{supp}(t_0,\lambda),$ and $\Delta(-A(t_0,\lambda)) = -\Delta (A(t_0,\lambda))$.
\end{lemma}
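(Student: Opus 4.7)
The plan is to adapt the first-countable argument of Lemma \ref{l 3.5 FangWang supports}, substituting the single peak function $g_0\in\hbox{Pick}(t_0,\lambda)$ (which need not exist when $t_0$ is not a $G_\delta$ point) by a net of Urysohn-type bumps indexed by the neighborhood filter of $t_0$. Fix $f\in A(t_0,-\lambda)$ and $\varphi\in\hbox{supp}(t_0,\lambda)$; the goal is to show $\varphi(\Delta(f))=-1$. For each open neighborhood $U$ of $t_0$, Urysohn's lemma (applicable since $K$ is compact Hausdorff and hence normal) furnishes a continuous $h_U\colon K\to[0,1]$ with $h_U(t_0)=1$ and $h_U\equiv 0$ on $K\setminus U$; put $g_U := \lambda h_U\in A(t_0,\lambda)$. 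Since $|f(t_0)-g_U(t_0)|=2$, one has $\|f-g_U\|=2$ and hence $\|-\Delta(f)+\Delta(g_U)\|=\|\Delta(g_U)-\Delta(f)\|=2$. Setting $h^*:=\Delta^{-1}(-\Delta(f))\in S(C(K))$ and invoking \cite[Corollary 2.2]{FangWang06} then yields $\|h^*+g_U\|=2$ for every $U$.

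The key step is a net convergence argument. By compactness of $K$, the supremum norm is attained: there exists $t_U\in K$ with $|h^*(t_U)+g_U(t_U)|=2$. Equality in the triangle inequality, combined with $|h^*(t_U)|,|g_U(t_U)|\leq 1$, forces $h^*(t_U)=g_U(t_U)$ and both to have modulus one; in particular $h_U(t_U)=1$, so $t_U\in U$, and $h^*(t_U)=\lambda$. Directing the open neighborhoods of $t_0$ by reverse inclusion, the net $(t_U)$ converges to $t_0$, and continuity of $h^*$ yields $h^*(t_0)=\lambda$, that is, $h^*\in A(t_0,\lambda)$. Therefore $-\Delta(f)=\Delta(h^*)\in\Delta(A(t_0,\lambda))$, and the defining equation of $\hbox{supp}(t_0,\lambda)$ forces $\varphi(-\Delta(f))=1$. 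The main obstacle being surmounted here is the possible absence of peak functions at non-$G_\delta$ points; the compactness argument applied to the Urysohn bumps substitutes for their existence.

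The consequences are formal. The identity just established yields $\Delta(A(t_0,-\lambda))\subseteq\{x\in\mathcal{B}_X:\varphi(x)=-1\}=-\Delta(A(t_0,\lambda))$; both sides are maximal convex subsets of $S(X)$ (by the argument in the proof of Lemma \ref{l existence of support functionals for the image of a face}, together with the observation that negation preserves maximality of convex subsets of the sphere), so the inclusion is in fact an equality. This gives $\Delta(-A(t_0,\lambda))=-\Delta(A(t_0,\lambda))$, and $\hbox{supp}(t_0,-\lambda)=-\hbox{supp}(t_0,\lambda)$ follows at once from the defining equation of $\hbox{supp}$.
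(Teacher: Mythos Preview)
Your proof is correct and in fact slightly sharper than the paper's. Both proofs set $h^*:=\Delta^{-1}(-\Delta(f))$, use \cite[Corollary 2.2]{FangWang06} to get $\|h^*+g\|=2$ for all $g\in A(t_0,\lambda)$, and run a net argument over Urysohn bumps supported in shrinking neighborhoods of $t_0$. The difference is in how the value $h^*(t_0)=\lambda$ is extracted. The paper first deduces only $|h^*(t_0)|=1$ from the net, and then invokes the parallelogram-law estimate of Lemma~\ref{l parallelogram law} (applied to a further Urysohn bump with values in $[0,1]\lambda$) to rule out $h^*(t_0)\neq\lambda$ by contradiction. You instead build the specific bump $g_U=\lambda h_U$ with range in $[0,1]\lambda$ from the outset, so that at the point $t_U$ where $|h^*(t_U)+g_U(t_U)|=2$ the equality case of the triangle inequality forces $h^*(t_U)=g_U(t_U)=\lambda$ directly; continuity along the net then gives $h^*(t_0)=\lambda$ in one stroke. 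This bypasses Lemma~\ref{l parallelogram law} entirely. Your derivation of the two ``consequences'' via maximal convexity is also fine; alternatively one can avoid maximality by noting that the main identity, applied with $\lambda$ replaced by $-\lambda$, gives the reverse inclusion $-\Delta(A(t_0,\lambda))\subseteq\Delta(A(t_0,-\lambda))$ by symmetry.
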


\begin{proof} Let us take $f\in A(t_0,-\lambda)$ and  $\varphi \in \hbox{supp}(t_0,\lambda)$. The element $-\Delta (f)\in S(X)$, and thus there exists $h\in S(C(K))$ satisfying $\Delta (h) = -\Delta (f)$. We consider any $g\in A(t_0,\lambda)$. Since $\| f - g\| =2 = \|\Delta (f) - \Delta(g)\| =2,$ we deduce that $\Delta(h) = - \Delta(f)\in \hbox{St} (\Delta(g)).$ We have shown that $\|\Delta(h) + \Delta (g)\|=2,$ for all $g\in A(t_0,\lambda)$. Corollary 2.2 in \cite{FangWang06} implies \begin{equation}\label{eq 10 01}\hbox{ $\|h + g\|=2,$ for all $g\in A(t_0,\lambda)$. }
\end{equation} Consequently, for each $g \in A(t_0,\lambda)$ there exists $t_g\in K$ such that $$2\leq |h(t_g) + g(t_g)| \leq  |h(t_g)| + |g(t_g)|\leq 2.$$ That is, $|h(t_g)| =1$.\smallskip

For each open set $\mathcal{O}$ with $t_0\in \mathcal{O}$, we find, via Urysohn's lemma, $g_{_\mathcal{O}}\in A(t_0,\lambda)$ with $g_{_\mathcal{O}}|_{K\backslash\mathcal{O}} =0$. The above arguments show the existence of $t_{_\mathcal{O}}\in \mathcal{O}$ satisfying  $ |h(t_{_\mathcal{O}})| =1$ for every $\mathcal{O}.$ When the family of open subsets of $K$ containing $t_0$ are ordered by inclusion, the net $ (t_{_\mathcal{O}})_{_{\mathcal{O}}}$ converges to $t_0$. The continuity of $h$ gives $(1 )_{_{\mathcal{O}}}= (|h(t_{_\mathcal{O}})| )_{_{\mathcal{O}}}\to |h(t_{0})|$. Therefore, $|h(t_{0})|=1$.\smallskip

If $h(t_0) \neq \lambda$, we find, via Lemma \ref{l parallelogram law}, $0< \rho < \hbox{dist} (h(t_0),[0,1]\lambda)=\theta$ such that $| \alpha + \beta| \leq \sqrt{4 - (\theta - \rho)^2}<2$, for every $\alpha\in \mathcal{B}_{\mathbb{C}}$ with $|\alpha -h(t_{0})|<\rho$ and $\beta \in [0,1]\lambda$. The set $U:=\{s\in K : |h(s)-h(t_0)|<\rho\}$ is an open neighbourhood of $t_0$. Applying Urysohn's lemma we find $k\in C(K)$ with $0\leq k \leq 1$, $k(t_0) =1,$ and $k|_{_{K\backslash U}}=0$. The function $\lambda k\in A(t_0,\lambda)$, and then \eqref{eq 10 01} implies that $\|h + \lambda k \| = 2.$ Since $\lambda k(K)\subseteq [0,1]\lambda,$ $|h(s)|\leq 1$ and $|h(s) -h(t_0)|<\rho$ for every $s\in U,$ and $k|_{_{K\backslash U}}=0$, we apply the above property of $\rho$ to prove that $2=\|h + \lambda k \| \leq  \sqrt{4 - (\theta - \rho)^2}<2$, which is impossible. Therefore, $h(t_0) = \lambda,$ and hence $h\in A(t_0,\lambda)$ and $1=\varphi \Delta(h) =\varphi (-\Delta (f))=-\varphi \Delta (f).$\smallskip

We have seen that $\varphi \Delta(f) = -1,$ for every $f$ in $A(t_0,-\lambda)$ and every $\varphi \in \hbox{supp}(t_0,\lambda)$. Therefore, $ \Delta(A(t_0,-\lambda)) = \varphi^{-1} (\{-1\})\cap \mathcal{B}_{X} = (- \varphi)^{-1} (\{1\}) \cap \mathcal{B}_{X} $ $=- (\varphi)^{-1} (\{1\})\cap \mathcal{B}_{X} $ $= - \Delta(A(t_0,\lambda)),$ for every $\varphi \in \hbox{supp}(t_0,\lambda)$. This shows that $$\hbox{supp}(t_0,-\lambda) = -\hbox{supp}(t_0,\lambda).$$
\end{proof}

The next two results contain a generalized version of \cite[Lemma 2.3 and Proposition 2.4]{JVMorPeRa2017}, the arguments here need an application of Urysohn's lemma.

\begin{lemma}\label{l different points different supports} Suppose $K$ is a compact Hausdorff space. Let $\Delta : S(C(K))\to S(X)$ be a surjective isometry, where $X$ is a complex Banach space. Then the following statements hold:\begin{enumerate}[$(a)$]\item For every $t_0\neq t_1$ in $K$ and every $\lambda,\mu\in \mathbb{T}$ we have $\hbox{supp}(t_0,\lambda) \cap \hbox{supp}(t_1,\mu)=\emptyset;$
\item Given $\mu,\nu\in \mathbb{T}$ with $\mu\neq \nu$, and $t_0$ in $K$, we have $\hbox{supp}(t_0,\nu) \cap \hbox{supp}(t_0,\mu)=\emptyset.$
\end{enumerate}
\end{lemma}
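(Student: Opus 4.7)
My plan is to reduce both statements to a single bijectivity observation and then produce, via Urysohn's lemma, an element witnessing that the two faces in question are distinct. Assume $\varphi$ lies in the intersection $\hbox{supp}(t_0,\lambda)\cap\hbox{supp}(t_1,\mu)$ (respectively, $\hbox{supp}(t_0,\mu)\cap\hbox{supp}(t_0,\nu)$). The defining identity from Lemma \ref{l existence of support functionals for the image of a face} yields
$$\Delta(A(t_0,\lambda))\;=\;\varphi^{-1}(\{1\})\cap\mathcal{B}_X\;=\;\Delta(A(t_1,\mu)),$$
and since $\Delta$ is a bijection of the unit spheres, this forces $A(t_0,\lambda)=A(t_1,\mu)$ (and the analogous equality in the second case). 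It then suffices to exhibit a function lying in one face but not the other.

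For part (b), this is immediate from the definitions: Urysohn's lemma supplies a continuous $u\colon K\to[0,1]$ with $u(t_0)=1$, so $f:=\mu u$ lies in $A(t_0,\mu)\subseteq S(C(K))$; but $f(t_0)=\mu\neq\nu$ shows $f\notin A(t_0,\nu)$, contradicting $A(t_0,\mu)=A(t_0,\nu)$. For part (a), I would use that a compact Hausdorff space is normal to choose disjoint open neighbourhoods $U\ni t_0$ and $V\ni t_1$, and then invoke Urysohn to produce continuous $u\colon K\to[0,1]$ with $u(t_0)=1$ and $u|_{K\setminus U}\equiv 0$. The function $f:=\lambda u$ satisfies $\|f\|=1$, $f(t_0)=\lambda$, and $f(t_1)=0\neq\mu$ (as $\mu\in\mathbb{T}$), so $f\in A(t_0,\lambda)\setminus A(t_1,\mu)$, contradicting $A(t_0,\lambda)=A(t_1,\mu)$.

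The proof is essentially routine once one notices the bijectivity shortcut; the only real subtlety is that, in the absence of first countability, the sets $\hbox{Pick}(t_0,\lambda)$ used in \cite{JVMorPeRa2017} may be empty, and have to be replaced by bump functions constructed via Urysohn's lemma. A variant more in the spirit of the preceding result would not invoke injectivity of $\Delta$ at all but instead, for part (a), build $h=-\lambda u+\mu v$ with $u,v$ supported in disjoint neighbourhoods of $t_0,t_1$ respectively, giving $h\in A(t_0,-\lambda)\cap A(t_1,\mu)$, and then apply Lemma \ref{l 3.5 FangWang supports for general compact Hausdorff spaces} twice to get the contradiction $\varphi\Delta(h)=-1$ and $\varphi\Delta(h)=1$; this variant handles part (b) with $\nu=-\mu$ directly and falls back on the bijectivity argument otherwise. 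I expect no genuine obstacle beyond keeping the Urysohn constructions clean.
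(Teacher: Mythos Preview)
Your proposal is correct and takes a genuinely different route from the paper. You exploit the bijectivity of $\Delta$ to pass directly from $\varphi^{-1}(\{1\})\cap\mathcal{B}_X=\Delta(A(t_0,\lambda))=\Delta(A(t_1,\mu))$ to $A(t_0,\lambda)=A(t_1,\mu)$, and then simply produce a witnessing function via Urysohn. The paper never invokes injectivity of $\Delta$; instead, for $(a)$ it builds two orthogonal bump functions $f_0,f_1$, applies Lemma \ref{l 3.5 FangWang supports for general compact Hausdorff spaces} to get $\varphi\Delta(-\mu f_1)=-1$, and derives the contradiction $2\leq\|\lambda f_0+\mu f_1\|=1$ from a distance estimate. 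For $(b)$ the paper is less elementary still: it forces $\|\Delta(\nu f_0)+\Delta(\mu f_0)\|=2$ and then appeals to \cite[Corollary 2.2]{FangWang06} to pull this back to $|\mu+\nu|=2$. Your argument is shorter, avoids the external reference, and treats both parts uniformly; the paper's approach, on the other hand, stays closer in spirit to Lemma \ref{l 3.5 FangWang supports for general compact Hausdorff spaces} and does not rely on $\Delta$ being injective. The variant you sketch at the end (a single $h\in A(t_0,-\lambda)\cap A(t_1,\mu)$ yielding $\varphi\Delta(h)=\pm 1$ simultaneously) is also valid and is in fact a tidier version of what the paper does for $(a)$.
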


\begin{proof}$(a)$ Arguing by contradiction we assume the existence of $\varphi\in \hbox{supp}(t_0,\lambda) \cap \hbox{supp}(t_1,\mu)$. Let us find, via Urysohn's lemma, two functions $0\leq f_0,f_1\leq 1$ such that $f_0 f_1 =0$ and $f_j (t_j)= 1$ for $j=0,1$. Under these conditions we have  $\lambda f_0\in A(t_0, \lambda)$ and $\mu f_1\in A(t_1, \mu)$.\smallskip

Since $- \mu f_1\in A(t_1, -\mu)$, Lemma \ref{l 3.5 FangWang supports for general compact Hausdorff spaces} implies that  $\varphi \Delta (-\mu f_1) = -1.$ By definition $\varphi \Delta (\lambda f_0)= 1$, and then $$ 2 = \varphi \Delta (\lambda f_0) - \varphi \Delta (- \mu f_1) = |\varphi \Delta (\lambda f_0) - \varphi \Delta (- \mu f_1)  |$$ $$ \leq \| \Delta (\lambda f_0) - \Delta (- \mu f_1) \| =\|\lambda f_0+ \mu f_1\|=1,$$ which is impossible.\smallskip

$(b)$ Arguing as in the previous case, let us take $\varphi\in \hbox{supp}(t_0,\nu) \cap \hbox{supp}(t_0,\mu),$ with $\mu \neq \nu$, and $f_0\in A(t_0,1)$. Since $\mu f_0\in A(t_0,\mu)$ and $\nu f_0\in A(t_0,\nu)$, we get $$2 = \varphi \Delta (\nu f_0) + \varphi \Delta ( \mu f_0) \leq \|\Delta (\nu f_0) + \Delta (\mu f_0) \|\leq 2,$$ and by \cite[Corollary 2.2]{FangWang06} we have $2= \| \nu f_0  +  \mu f_0 \| = |\mu +\nu|,$ which holds if and only if $\mu =\nu$.
\end{proof}

\begin{proposition}\label{p kernel of the support}  Suppose $K$ is a compact Hausdorff space, $X$ is a complex Banach space, and $\lambda\in \mathbb{T}$. Let $\Delta : S(C(K))\to S(X)$ be a surjective isometry. Let $t_0$ be an element in $K$ and let $\varphi$ be an element in $\hbox{supp}(t_0,\lambda)$. Then $\varphi\Delta (f) =0,$ for every $f\in S(C(K))$ with $f(t_0)=0$. Furthermore, $|\varphi\Delta (f) |<1$, for every $f\in S(C(K))$ with $|f(t_0)|<1,$ and every $\varphi\in \hbox{supp}(t_0,\lambda)$.
\end{proposition}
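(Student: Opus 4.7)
My strategy will be to prove the unified quantitative bound $|\varphi\Delta(f)|\le |f(t_0)|$ for every $f\in S(C(K))$, which immediately yields both assertions: $f(t_0)=0$ forces $\varphi\Delta(f)=0$, while $|f(t_0)|<1$ forces $|\varphi\Delta(f)|<1$. The idea is to approximate $f$ simultaneously by elements of $A(t_0,\lambda)$ and $A(t_0,-\lambda)$ built from a Urysohn bump at $t_0$, and then combine the two estimates via the parallelogram identity in $\mathbb{C}$.

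Fixing $\varepsilon>0$, I would first use continuity of $f$ to pick an open neighborhood $U$ of $t_0$ on which $|f(s)-f(t_0)|<\varepsilon$, and then invoke Urysohn's lemma to produce a continuous $k:K\to[0,1]$ with $k(t_0)=1$ and $k|_{K\setminus U}=0$. Setting
\[
g_{\pm}:=(1-k)\,f\pm\lambda\,k,
\]
each value $g_{\pm}(s)$ is a convex combination of $f(s)$ and $\pm\lambda$, both of which lie in the closed unit disk of $\mathbb{C}$; hence $\|g_{\pm}\|\le 1$, and $g_{\pm}(t_0)=\pm\lambda$ upgrades this to $g_{\pm}\in A(t_0,\pm\lambda)$. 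The difference $f-g_{\pm}=(f\mp\lambda)\,k$ vanishes outside $U$, so
\[
\|f-g_{\pm}\|\le \sup_{s\in U}|f(s)\mp\lambda|\le |f(t_0)\mp\lambda|+\varepsilon.
\]

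By the definition of $\hbox{supp}(t_0,\lambda)$ together with Lemma \ref{l 3.5 FangWang supports for general compact Hausdorff spaces}, we have $\varphi\Delta(g_+)=1$ and $\varphi\Delta(g_-)=-1$. Since $\Delta$ is an isometry,
\[
|\varphi\Delta(f)\mp 1|\le \|\Delta(f)-\Delta(g_{\pm})\|=\|f-g_{\pm}\|\le |f(t_0)\mp\lambda|+\varepsilon.
\]
Squaring and adding these two inequalities, then invoking the parallelogram identity twice — on the left for the pair $(\varphi\Delta(f),1)$ and on the right for $(f(t_0),\lambda)$, using $|\lambda|=1$ — yields
\[
2|\varphi\Delta(f)|^2+2\le 2|f(t_0)|^2+2+O(\varepsilon).
\]
Letting $\varepsilon\to 0$ gives $|\varphi\Delta(f)|\le |f(t_0)|$, which delivers both claims. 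The only delicate point is confirming the convex-combination check that $g_{\pm}\in A(t_0,\pm\lambda)$; once that is in place, the continuity shrinking of $U$ and the parallelogram identity do the rest mechanically.
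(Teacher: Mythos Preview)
Your argument is correct and in fact proves the sharper pointwise bound $|\varphi\Delta(f)|\le|f(t_0)|$, from which both statements follow at once. The paper proceeds differently: it first treats the special case of a function $g$ vanishing on an open neighborhood of $t_0$, where one can add a disjointly supported Urysohn bump $\pm\lambda f_0$ so that $\|g\pm\lambda f_0\|=1$ exactly, obtaining $|\varphi\Delta(g)\pm1|\le1$ and hence $\varphi\Delta(g)=0$; then it passes to arbitrary $f$ with $f(t_0)=0$ by a density argument, and finally derives the strict inequality $|\varphi\Delta(f)|<1$ in a separate step by multiplying $f$ by a suitable cutoff $h$ with $h(t_0)=0$ and $\|f-fh\|<1$. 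Your route replaces the disjoint-support trick by the convex combination $g_\pm=(1-k)f\pm\lambda k$, which lands directly in $A(t_0,\pm\lambda)$ without any preliminary reduction, and then packages the two resulting estimates through the parallelogram identity to get the quantitative bound in a single stroke. This is shorter, avoids both the density step and the separate treatment of the second assertion, and yields a strictly stronger conclusion; the paper's approach, on the other hand, makes the underlying mechanism (orthogonal perturbation by a peak function) slightly more transparent.
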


\begin{proof} Let us take $g\in S(C(K))$ such that $g(t)=0$ for every $t$ in an open neighbourhood $U$ of $t_0$. Take, via Urysohn's lemma, a function $f_0\in S(C(K))$ with $0\leq f_0\leq 1$,  $f_0(t_0)=1$ and $f_0|_{K\backslash U}\equiv 0$. The functions $g\pm \lambda f_0\in S(C(K))$ with  $\lambda f_0\in A(t_0,\lambda)$ and $- \lambda f_0\in A(t_0,-\lambda)$. Let us fix $\varphi\in \hbox{supp}(t_0,\lambda)$. Lemma \ref{l 3.5 FangWang supports for general compact Hausdorff spaces} implies that $\varphi \Delta(-\lambda f_0) =-1$, and clearly $\varphi \Delta(\lambda f_0) =1$. Thus $$|\varphi \Delta(g) \pm 1|  = |\varphi \Delta(g) \pm \varphi \Delta(\lambda f_0)| =  |\varphi \Delta(g) - \varphi \Delta(\mp \lambda f_0)|$$ $$\leq \|\varphi\| \ \| \Delta(g) - \Delta(\mp \lambda f_0) \| = \| g  \pm \lambda f_0 \| =1,$$ which assures that  $\varphi \Delta(g)=0.$\smallskip

Since every function $f\in S(C(K))$ with $f(t_0)=0$ can be approximated in norm by functions in $S(C(K))$ vanishing in an open neighbourhood of $t_0$, we deduce from the continuity of $\varphi \Delta$ and the property proved in the previous paragraph that $\varphi \Delta(f) =0$, for every such $f$.\smallskip

For the last statement, let us take $f\in S(C(K))$ with $|f(t_0)|<1,$ and $\varphi\in \hbox{supp}(t_0,\lambda)$. Let us find $1>\varepsilon >0$ such that $|f(t_0)|< 1-\varepsilon$. We consider the non-empty closed set $C_{\varepsilon} :=\{ t\in K : |f(t)|\geq 1-\varepsilon \}$ and the open complement $\mathcal{O}_{\varepsilon} = K\backslash C_{\varepsilon}\ni t_0.$ We can find, via Urysohn's lemma, a function $h\in S(C(K))$ with $0\leq h\leq 1$, $h|_{C_{\varepsilon}} \equiv 1,$ and $h(t_0) =0$. It is easy to check that $f h \in S(C(K))$, $(fh) (t_0)=0$, and $\|f-f h\| \leq 1-\varepsilon <1$.\smallskip

Since $(fh) (t_0)=0$, the first statement of this proposition proves that $\varphi \Delta (fh) =0$, and thus $$| \varphi \Delta (f)  | = | \varphi \Delta (f) -\varphi \Delta (fh) | \leq \| \Delta (f) - \Delta (fh )\| = \| f -fh\| < 1-\varepsilon <1.$$
\end{proof}

Next, we derive a first consequence of the previous proposition.

\begin{corollary}\label{c p kernel of the support}  Suppose $K$ is a compact Hausdorff space, $X$ is a complex Banach space, and $\lambda\in \mathbb{T}$. Let $\Delta : S(C(K))\to S(X)$ be a surjective isometry. If we take $b,c\in S(C(K))$ such that $\Delta(b) = \lambda \Delta(c)$, then $|b(t)|<1,$ for every $t\in K$ satisfying $|c(t)|<1$.
\end{corollary}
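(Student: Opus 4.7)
The plan is to argue by contradiction using the support functionals $\hbox{supp}(t_0,\mu)$ from Lemma \ref{l existence of support functionals for the image of a face}, with the second part of Proposition \ref{p kernel of the support} doing the heavy lifting.

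Suppose, toward a contradiction, that there exists $t_0\in K$ with $|c(t_0)|<1$ but $|b(t_0)|=1$. Write $\mu:=b(t_0)\in\mathbb{T}$, so that $b\in A(t_0,\mu)$. By Lemma \ref{l existence of support functionals for the image of a face} the set $\hbox{supp}(t_0,\mu)$ is non-empty, so I can pick some $\varphi$ in it. By the very definition of $\hbox{supp}(t_0,\mu)$, since $b\in A(t_0,\mu)$ we have $\varphi\Delta(b)=1$.

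Now I use the hypothesis $\Delta(b)=\lambda\Delta(c)$ to transfer the information to $c$: indeed,
$$1=\varphi\Delta(b)=\varphi(\lambda\Delta(c))=\lambda\,\varphi\Delta(c),$$
and hence $|\varphi\Delta(c)|=1$. However, Proposition \ref{p kernel of the support} applied to $f=c$ (which satisfies $|c(t_0)|<1$) yields $|\varphi\Delta(c)|<1$, a contradiction. Therefore no such $t_0$ can exist, establishing the claim.

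The only potential pitfall is making sure a support functional at the right point is available; that is exactly why we invoke Lemma \ref{l existence of support functionals for the image of a face}, which guarantees that $\hbox{supp}(t_0,\mu)$ is non-empty whenever we need to peg an element of $A(t_0,\mu)$. After that, the corollary is essentially immediate from Proposition \ref{p kernel of the support}, with no extra approximation or Urysohn-type argument required.
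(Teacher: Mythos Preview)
Your proof is correct and follows essentially the same approach as the paper's own argument: assume $|b(t_0)|=1$, take $\varphi\in\hbox{supp}(t_0,b(t_0))$ via Lemma \ref{l existence of support functionals for the image of a face}, use $b\in A(t_0,b(t_0))$ and the hypothesis $\Delta(b)=\lambda\Delta(c)$ to obtain $|\varphi\Delta(c)|=1$, and derive a contradiction with the second statement of Proposition \ref{p kernel of the support}. There is no substantive difference between the two proofs.
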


\begin{proof} Let us take $t\in K$ satisfying $|c(t)|<1$. By the final statement in Proposition \ref{p kernel of the support} we have $|\varphi\Delta(c)|<1$, for every $\mu\in \mathbb{T}$ and every $\varphi\in \hbox{supp}(t,\mu)$. If $|b(t)|=1$, we can find $\phi\in \hbox{supp}(t,b(t))$ (see Lemma \ref{l existence of support functionals for the image of a face}). Since $b\in A(t,b(t))$, we have $1=\phi \Delta(b) = \phi (\lambda \Delta(c)) = \lambda \phi\Delta(c),$ and thus, $1 = |\lambda| \ |\phi\Delta(c)|<1,$ which leads to a contradiction.
\end{proof}

\section{Geometric properties for Stonean spaces}

For a general compact Hausdorff space $K$, the C$^*$-algebra $C(K)$ rarely contains an abundant collection of projections. For example, $C[0,1]$ only contains trivial projections. If we assume that $K$ is Stonean, then the characteristic function, $\chi_{_A},$ of every non-empty clopen set $A\subset K$ is a continuous function and a projection in $C(K)$, and thus $C(K)$ contains an abundant family of non-trivial projections. Throughout this section we shall work with continuous functions on a Stonean space.\smallskip

Our first result is a reciprocal of Proposition \ref{p kernel of the support} and will be repeatedly applied in our arguments.


\begin{proposition}\label{p property of the support}
Suppose $K$ is a Stonean space and $X$ is a complex Banach space. Let $\Delta : S(C(K))\to S(X)$ be a surjective isometry. Let $t_0$ be an element in $K$. If $b$ is an element in $S(C(K))$ satisfying $\varphi\Delta(b)=0,$ for every $\varphi\in \hbox{supp}(t_0,\mu)$ and for every $\mu\in\mathbb{T}$, then $b(t_0)=0$.
\end{proposition}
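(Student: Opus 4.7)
The plan is to argue by contradiction, using the fact that in a Stonean space clopen sets form a basis for the topology to build a good test function, and then to use the hypothesis against a support functional.

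Assume $b(t_0) = \alpha \neq 0$, and set $\mu := \alpha/|\alpha| \in \mathbb{T}$. By Lemma \ref{l existence of support functionals for the image of a face}, $\hbox{supp}(t_0,\mu)$ is non-empty, so we can pick $\varphi\in \hbox{supp}(t_0,\mu)$; by hypothesis $\varphi\Delta(b)=0$. I will contradict this by producing a function $c\in A(t_0,\mu)$ that is close to $b$ in norm.

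First I would record that in a Stonean space clopen sets form a basis for the topology. Indeed, given any open neighbourhood $W$ of $t_0$, pick an open $V$ with $t_0\in V\subseteq \overline{V}\subseteq W$; by the definition of a Stonean space $\overline{V}$ is clopen. Combined with the continuity of $b$, for every $\varepsilon>0$ there is a clopen $U\ni t_0$ such that $|b(t)-\alpha|<\varepsilon$ for all $t\in U$. Because $K$ is Stonean, $\chi_U\in C(K)$, so I can set
$$c := \mu\,\chi_{U} + b\,\chi_{K\setminus U}\in C(K).$$
Since $|c(t_0)|=|\mu|=1$ and $\|b\chi_{K\setminus U}\|_\infty\leq 1$, we have $c\in S(C(K))$ and $c\in A(t_0,\mu)$. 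Moreover, using $|\alpha|\leq 1$,
$$\|c-b\|=\sup_{t\in U}|\mu - b(t)|\leq |\mu-\alpha|+\varepsilon = (1-|\alpha|)+\varepsilon.$$

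Finally, since $c\in A(t_0,\mu)$ and $\varphi\in\hbox{supp}(t_0,\mu)$, the definition gives $\varphi\Delta(c)=1$. Because $\Delta$ is an isometry,
$$1 = |\varphi\Delta(c)-\varphi\Delta(b)| \leq \|\Delta(c)-\Delta(b)\| = \|c-b\| \leq (1-|\alpha|)+\varepsilon.$$
Rearranging yields $|\alpha|\leq \varepsilon$, and letting $\varepsilon\to 0$ forces $\alpha=0$, contradicting the assumption.

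The main (and really only) obstacle is the construction of $c$: one needs genuine clopen neighbourhoods of $t_0$ in order to truncate $b$ without losing continuity and without changing norms, which is precisely what the Stonean hypothesis provides. Without extremal disconnectedness—say for general $K$—one could only multiply $b$ by Urysohn-type bump functions, which would not land $c$ in $A(t_0,\mu)$ with a norm estimate tight enough to defeat the hypothesis $\varphi\Delta(b)=0$. Thus the Stonean assumption is used in an essential way at exactly this step.
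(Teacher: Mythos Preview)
Your proof is correct and follows essentially the same approach as the paper: both arguments assume $b(t_0)\neq 0$, use a clopen neighbourhood of $t_0$ (available because $K$ is Stonean) to modify $b$ into an element $c\in A(t_0,\mu)$ with $\mu=b(t_0)/|b(t_0)|$, and then derive a contradiction from $1=|\varphi\Delta(c)-\varphi\Delta(b)|\leq\|c-b\|<1$. Your construction replaces $b$ by the constant $\mu$ on the clopen set rather than normalising $b$ there as the paper does, which has the pleasant side effect of handling the cases $|b(t_0)|=1$ and $|b(t_0)|<1$ uniformly without a case split.
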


\begin{proof} Arguing by contradiction, we suppose that $b(t_0)\neq 0$. If $|b(t_0)|= 1$, we can pick $\varphi\in \hbox{supp}(t_0,b(t_0))$ (compare Lemma \ref{l existence of support functionals for the image of a face}). It is clear that $b\in A(t_0,b(t_0)),$ and hence $\varphi \Delta(b) = 1$, which contradicts the hypothesis in the proposition.\smallskip

We deal now with the case $0< |b(t_0)|< 1$. Since $K$ is Stonean, we can always find a clopen subset $W$ satisfying $$t_0\in W\subseteq \left\{s\in K : |b(s)-b(t_0)| < \frac{|b(t_0)|}{2}\right\}.$$ Let us observe that $0<\frac{|b(t_0)|}{2}<|b(s)|,$ for every $s\in W$. Having in mind the last observation, we consider the function $c = b (1-\chi_{_W}) + b |b|^{-1} \chi_{_W}\in C(K)$. Clearly $ \| c\|\leq 1$ and $c(t_0) = \frac{b(t_0)}{|b(t_0)|}\in \mathbb{T}$, therefore $c\in S(C(K))$. It is not hard to check that $$\| c-b\| = \| (b |b|^{-1}-b) \chi_{_W}\| \leq \sup_{s\in W} | b(s) (|b(s)|^{-1}-1) | =\sup_{s\in W} |1-|b(s)|| $$ $$\leq  1 - \inf_{s\in W} |b(s)| \leq 1-\frac{|b(t_0)|}{2}.$$ The element $c$ lies in $A\left(t_0, \frac{b(t_0)}{|b(t_0)|}\right)$, and so we can conclude, by taking $\mu\in\mathbb{T}$, $\varphi\in \hbox{supp}(t_0,\mu)$ and applying the hypothesis, that $$1=\varphi \Delta(c)-\varphi\Delta(b)\leq \|\Delta(c)- \Delta(b) \|=\|c-b\|\leq 1-\frac{|b(t_0)|}{2},$$ leading to  $\frac{|b(t_0)|}{2}\leq 0$, which is impossible.	
\end{proof}

Our next results are devoted to determine the behaviour of a surjective isometry $\Delta : S(C(K))\to S(X)$ on elements which are finite linear combinations of mutually orthogonal projections. We begin with a single characteristic function of a clopen set.

\begin{proposition}\label{p characteristic of clopen sets 1}  Suppose $K$ is a Stonean space, $A$ is a non-empty clopen subset of $K$, $X$ is a complex Banach space, and $\lambda, \gamma\in \mathbb{T}$. Let $\Delta : S(C(K))\to S(X)$ be a surjective isometry. If we take $b\in S(C(K))$ such that $\Delta(b) = \lambda \Delta(\gamma \chi_{_A})$, then $b = b \chi_{_A}$ and $|b(t)|=1,$ for every $t\in A$.
\end{proposition}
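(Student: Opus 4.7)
The plan is to treat the two conclusions separately, in each case feeding the hypothesis $\Delta(b)=\lambda\Delta(\gamma\chi_{_A})$ into one of the two complementary propositions about $\hbox{supp}(t_0,\mu)$ that have just been established. First I would prove $b=b\chi_{_A}$ via Proposition \ref{p property of the support}. Fix $t_0\in K\backslash A$ and an arbitrary $\mu\in\mathbb{T}$, and take any $\varphi\in\hbox{supp}(t_0,\mu)$. Since $(\gamma\chi_{_A})(t_0)=0$, the first statement of Proposition \ref{p kernel of the support} gives $\varphi\Delta(\gamma\chi_{_A})=0$; multiplying by $\lambda$ yields $\varphi\Delta(b)=0$. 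As $\mu$ and $\varphi$ were arbitrary, Proposition \ref{p property of the support} forces $b(t_0)=0$, which handles every point outside $A$ and shows $b=b\chi_{_A}$.

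Next I would prove $|b(t)|=1$ for every $t\in A$ by contradiction. Fix $t_0\in A$ and suppose $|b(t_0)|<1$ (recall that $|b(t_0)|\leq 1$ holds automatically because $b\in S(C(K))$). Since $(\gamma\chi_{_A})(t_0)=\gamma$, we have $\gamma\chi_{_A}\in A(t_0,\gamma)$, and by Lemma \ref{l existence of support functionals for the image of a face} there exists $\varphi\in\hbox{supp}(t_0,\gamma)$. By the definition of $\hbox{supp}(t_0,\gamma)$ one has $\varphi\Delta(\gamma\chi_{_A})=1$, so $\varphi\Delta(b)=\lambda\varphi\Delta(\gamma\chi_{_A})=\lambda$ has modulus one. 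On the other hand, the final assertion of Proposition \ref{p kernel of the support} applied to $b$ gives $|\varphi\Delta(b)|<1$, a contradiction.

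The main obstacle here is one of recognition rather than construction: one must see that the two clauses of the conclusion align exactly with the first and last statements of Proposition \ref{p kernel of the support}, and that its converse Proposition \ref{p property of the support}, together with the existence of support functionals in Lemma \ref{l existence of support functionals for the image of a face}, supplies everything needed. No Urysohn construction or parallelogram-law estimate is required at this step, since all the delicate work has already been invested in Section 2 and in the converse Proposition \ref{p property of the support}; the only care to take is to quantify $\varphi$ over every $\mu\in\mathbb{T}$ when invoking Proposition \ref{p property of the support} in the first step, which is precisely the form in which its hypothesis is stated.
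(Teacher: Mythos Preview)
Your proof is correct and follows essentially the same approach as the paper: for $t_0\notin A$ you combine Proposition \ref{p kernel of the support} with Proposition \ref{p property of the support} to get $b(t_0)=0$, and for $t_0\in A$ you derive a contradiction from the final statement of Proposition \ref{p kernel of the support} by noting that $|\varphi\Delta(b)|=|\lambda|\,|\varphi\Delta(\gamma\chi_{_A})|=1$ for $\varphi\in\hbox{supp}(t_0,\gamma)$. The only cosmetic difference is that you explicitly invoke Lemma \ref{l existence of support functionals for the image of a face} to guarantee $\hbox{supp}(t_0,\gamma)\neq\emptyset$, which the paper leaves implicit.
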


\begin{proof} We shall first prove that $b = b \chi_{_A}$. Let us fix $t_0\in K\backslash A$. If we pick an arbitrary $\mu\in\mathbb{T}$ and $\varphi\in \hbox{supp}(t_0,\mu)$, combining the hypothesis with Proposition \ref{p kernel of the support} we get $\varphi\Delta(b)=\lambda\varphi\Delta(\gamma\chi_{_{A}})=0$ which implies, via Proposition \ref{p property of the support}, that $b(t_0)=0$. The arbitrariness of $t_0$ guarantees that $b = b \chi_{_A}$.\smallskip

Take now $t_0\in A$. If $|b(t_0)|<1$, the second statement in Proposition \ref{p kernel of the support} assures that $|\varphi \Delta(b)|<1,$ for every $\varphi\in \hbox{supp} \left(t_0, \gamma  \right)$. However, in this case, $1> |\varphi \Delta(b)| = |\varphi (\lambda \Delta(\gamma  \chi_{_A}))| = |\lambda| |\varphi \Delta(\gamma  \chi_{_A})| = 1,$ which leads to a contradiction. Therefore, $|b(t_0)|=1$, for every $t_0\in A$.
\end{proof}

The next lemma is an elementary technical observation with a curious geometric interpretation.

\begin{lemma}\label{l technical distances in the sphere of C} Let $\delta$ be a real number with $0< \delta <2$. Then the set $$\{\zeta \in \mathbb{T} : |\zeta -1|^2 \geq \delta^2, \ |\zeta +1|^2 \geq 4-\delta^2 \}$$  coincides with $\{\lambda, \overline{\lambda}\}$ for a unique $\lambda \in \mathbb{T}$ with $|\zeta -1|^2 =\delta^2,$ and $|\zeta +1|^2 = 4-\delta^2$. Moreover, for each $\gamma\in \mathbb{T}$ we have $$\{\zeta \in \mathbb{T} : |\zeta-\gamma|^2 \geq |\lambda -1|^2 , \ |\zeta +\gamma|^2 \geq |\lambda +1|^2 \} = \{\lambda \gamma, \overline{\lambda} \gamma\}.$$
\end{lemma}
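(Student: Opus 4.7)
The plan is to reduce everything to the parallelogram law. Any $\zeta\in\mathbb{T}$ satisfies
\[
|\zeta-1|^2 + |\zeta+1|^2 \;=\; 2(|\zeta|^2+1) \;=\; 4,
\]
so if both $|\zeta-1|^2\ge \delta^2$ and $|\zeta+1|^2\ge 4-\delta^2$ hold, adding them and comparing with the identity above forces both inequalities to be equalities simultaneously. This is the only real idea in the argument; after that everything is explicit.

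Next, I would solve $|\zeta-1|^2=\delta^2$ on the unit circle. Writing $\zeta=\cos\theta+i\sin\theta$ turns the equation into $2-2\cos\theta=\delta^2$, i.e.\ $\cos\theta = 1-\delta^2/2$, and the hypothesis $0<\delta<2$ places the right-hand side strictly inside $(-1,1)$. So there are exactly two solutions $\theta_0,-\theta_0\in(-\pi,\pi)\setminus\{0\}$, giving the conjugate pair $\lambda:=e^{i\theta_0}$ and $\overline{\lambda}=e^{-i\theta_0}$. By the parallelogram identity, both automatically satisfy $|\zeta+1|^2=4-\delta^2$, and uniqueness of $\lambda$ (up to conjugation) is immediate from the uniqueness of $\theta_0\in(0,\pi)$.

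For the moreover clause I would repeat exactly the same trick with $\gamma\in\mathbb{T}$ in place of $1$: now
\[
|\zeta-\gamma|^2 + |\zeta+\gamma|^2 \;=\; 2(|\zeta|^2+|\gamma|^2) \;=\; 4 \;=\; |\lambda-1|^2+|\lambda+1|^2,
\]
so the two inequalities force equalities $|\zeta-\gamma|^2=|\lambda-1|^2$ and $|\zeta+\gamma|^2=|\lambda+1|^2$. Setting $\eta:=\overline{\gamma}\zeta\in\mathbb{T}$, this reads $|\eta-1|^2=|\lambda-1|^2$ and $|\eta+1|^2=|\lambda+1|^2$, so by the first part of the lemma $\eta\in\{\lambda,\overline{\lambda}\}$, hence $\zeta\in\{\lambda\gamma,\overline{\lambda}\gamma\}$. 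The reverse inclusion is a direct substitution.

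I do not anticipate any real obstacle: the only nontrivial step is the use of the parallelogram identity to upgrade the two inequalities to equalities, and once this is recognised the rest is an elementary computation on $\mathbb{T}$. The one point worth being careful about is confirming that $\lambda\ne\overline{\lambda}$ (so that the described set really has two elements and not one); this follows from $\cos\theta_0=1-\delta^2/2\ne \pm 1$, which uses the strict inequalities $0<\delta<2$.
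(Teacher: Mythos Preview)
Your argument is correct. The key observation---that the parallelogram identity $|\zeta-1|^2+|\zeta+1|^2=4$ forces both inequalities to be equalities---is a clean and fully rigorous way to pin down the set, and the reduction $\eta=\overline{\gamma}\zeta$ handles the second claim neatly.

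The paper's proof reaches the same conclusion by a slightly different route: it treats the two conditions as a system of inequalities, asserts (without an explicit mechanism) that the solution set consists of a single conjugate pair, writes down the closed-form solutions $\lambda=\tfrac12(2-\delta^2+i\delta\sqrt{4-\delta^2})$ and $\overline{\lambda}$, and then interprets the picture geometrically as the intersection of $\mathbb{T}$ with the exteriors of two discs centred at $\pm 1$; the $\gamma$-version is obtained by rotating this picture. Your use of the parallelogram law makes transparent \emph{why} the inequalities collapse to equalities, which the paper leaves as ``standard to prove'', and your substitution $\eta=\overline{\gamma}\zeta$ is more economical than the paper's rotation argument. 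Conversely, the paper's explicit formula for $\lambda$ and its geometric description (with a figure) give a concrete visual anchor that your more algebraic treatment does not provide. Both approaches are elementary; yours is the tidier justification.
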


\begin{proof} Let us take $0<\delta <2$. It is standard to prove that the set
$Z=\{\zeta \in \mathbb{T} : |\zeta -1|^2 \geq \delta^2, \ |\zeta +1|^2 \geq 4-\delta^2 \}$ is composed of just one complex number and its conjugate, both of them depending only on $\delta$. Actually, if we solve the corresponding system of inequalities associated to the conditions required to be in $Z$, we find that the only two analytic solutions are $\lambda=\frac{1}{2}(2-\delta^2+ i \delta\sqrt{4-\delta^2})$ and $\overline{\lambda}=\frac{1}{2}(2-\delta^2- i \delta\sqrt{4-\delta^2})$. It is worth to observe that $Z$ is precisely the set of those elements in the complex unit sphere which are outside the open disc of center $(1,0)$ and with radius $\delta$ and outside the open disc of center $(-1,0)$ and radius $\sqrt{4-\delta^2}$. Figure \ref*{circumferences} below illustrates this geometric interpretation.\smallskip

\begin{figure}[h]
	\centering
	\includegraphics[width=0.4\textwidth]{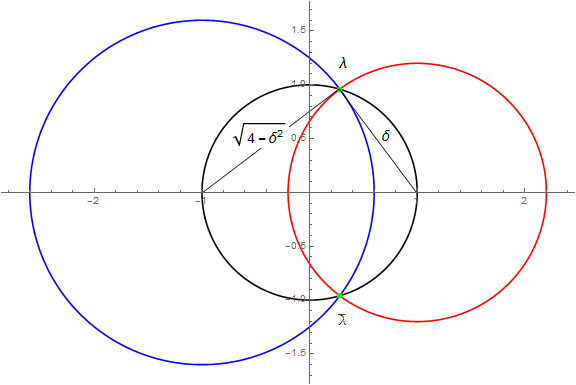}
	\caption{Particular case of Lemma \ref{l technical distances in the sphere of C} with $\delta=1.2$}
	\label{circumferences}
\end{figure}

According to the above observations, for each $\gamma\in\mathbb{T}$, the set $\{\zeta \in \mathbb{T} : |\zeta-\gamma|^2\geq |\lambda -1|^2 , \ |\zeta +\gamma|^2 \geq |\lambda +1|^2 \}$ can be identified with an appropriate turn of $Z.$ The parameter $\delta$ is exactly the distance from $\lambda$ to $1$ and $|\lambda + 1|^2= 4-\delta^2$. In this new setting, we work with the complex sphere and the circumferences centered at $\gamma$ and $-\gamma$ with radii $\delta$ and $\sqrt{4-\delta^2}$, respectively. 
Thus, the only two elements in this turned set are $\lambda\gamma$ and $\overline{\lambda}\gamma$.
\end{proof}

We can now complete the information in Proposition \ref{p characteristic of clopen sets 1}. Henceforth, for each element $a$ in a complex Banach algebra $A$, the symbol $\sigma(a)$ will stand for the spectrum of $a$.

\begin{proposition}\label{p characteristic of clopen sets 2}  Suppose $K$ is a Stonean space, $A$ is a non-empty clopen subset of $K$, $\lambda,\gamma\in \mathbb{T}$, and $X$ is a complex Banach space. Let $\Delta : S(C(K))\to S(X)$ be a surjective isometry. If we take $b\in S(C(K))$ such that $\Delta(b) = \lambda \Delta(\gamma \chi_{_A})$, then $b = b \chi_{_A}$ and $\sigma(b)\subseteq \{\lambda \gamma ,\overline{\lambda} \gamma ,0\}$. Consequently, there exist two disjoint clopen sets $A_1$ and $A_2$ {\rm(}one of which could be empty{\rm)} such that $A = A_1 \cup A_2$ and $ b = \lambda \gamma  \chi_{_{A_1}} + \overline{\lambda} \gamma  \chi_{_{A_2}}$. Consequently, $\Delta(- \gamma  \chi_{_A}) = - \Delta( \gamma \chi_{_A})$.
\end{proposition}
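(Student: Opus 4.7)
My plan is to first prove the auxiliary identity $\Delta(-\gamma\chi_A)=-\Delta(\gamma\chi_A)$ (which coincides with the final ``consequently'' in the statement) and then to combine it with the isometry in order to bound $|b(t_0)-\gamma|$ and $|b(t_0)+\gamma|$ simultaneously at every $t_0 \in A$. The parallelogram identity will force equality in both bounds, after which the ``moreover'' part of Lemma \ref{l technical distances in the sphere of C} identifies $b(t_0)$ as one of the two points $\lambda\gamma$ or $\overline{\lambda}\gamma$. The essential obstacle is the lack of an a priori estimate for $|b(t_0)+\gamma|$ coming from the isometry alone; this is exactly what the auxiliary identity resolves, so its proof is the technical heart of the argument.

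For the auxiliary step, set $d := \Delta^{-1}(-\Delta(\gamma\chi_A)) \in S(C(K))$. Proposition \ref{p characteristic of clopen sets 1} applied with $\lambda = -1$ already yields $d = d\chi_A$ and $|d(t)|=1$ for every $t\in A$. Fix $t_0 \in A$ and, via Lemma \ref{l existence of support functionals for the image of a face}, pick $\varphi\in \hbox{supp}(t_0,\gamma)$. Since $\gamma\chi_A\in A(t_0,\gamma)$ one has $\varphi\Delta(\gamma\chi_A)=1$, whence $(-\varphi)\Delta(d) = (-\varphi)(-\Delta(\gamma\chi_A)) = \varphi\Delta(\gamma\chi_A)=1$. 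Lemma \ref{l 3.5 FangWang supports for general compact Hausdorff spaces} gives $-\varphi\in \hbox{supp}(t_0,-\gamma)$, whose defining identity $(-\varphi)^{-1}(\{1\})\cap \mathcal{B}_X = \Delta(A(t_0,-\gamma))$ then forces $\Delta(d)\in\Delta(A(t_0,-\gamma))$ and, by injectivity of $\Delta$, $d(t_0)=-\gamma$. As $t_0$ was arbitrary in $A$, we conclude $d=-\gamma\chi_A$, i.e.\ $\Delta(-\gamma\chi_A)=-\Delta(\gamma\chi_A)$.

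For the general claim, the isometry yields $\|b-\gamma\chi_A\| = \|(\lambda-1)\Delta(\gamma\chi_A)\| = |\lambda-1|$, whence $|b(t_0)-\gamma|\leq |\lambda-1|$ on $A$. Combining the isometry with the identity just proved, $\|b+\gamma\chi_A\| = \|\Delta(b)-\Delta(-\gamma\chi_A)\| = \|\lambda\Delta(\gamma\chi_A)+\Delta(\gamma\chi_A)\| = |\lambda+1|$, so also $|b(t_0)+\gamma|\leq |\lambda+1|$. Since $b(t_0)$ and $\gamma$ both lie in $\mathbb{T}$, the parallelogram identity gives $|b(t_0)-\gamma|^2 + |b(t_0)+\gamma|^2 = 4 = |\lambda-1|^2 + |\lambda+1|^2$, forcing equality in both bounds. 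The ``moreover'' part of Lemma \ref{l technical distances in the sphere of C} (with $\delta := |\lambda-1|$) then yields $b(t_0)\in\{\lambda\gamma,\overline{\lambda}\gamma\}$. Taking $A_1 := \{t\in A : b(t)=\lambda\gamma\}$ and $A_2 := A\setminus A_1$, continuity of $b$ makes $A_1$ closed in $K$, and $A_2$ is either empty (when $\lambda=\pm 1$) or the preimage $\{t\in A : b(t)=\overline{\lambda}\gamma\}$ (when $\lambda\neq\pm 1$), so it is also closed; hence $A_1$ and $A_2$ are disjoint clopen subsets of $A$ whose union is $A$. The decomposition $b = \lambda\gamma\chi_{A_1} + \overline{\lambda}\gamma\chi_{A_2}$, the inclusion $\sigma(b)\subseteq \{0, \lambda\gamma, \overline{\lambda}\gamma\}$, and the final identity (either by specializing to $\lambda=-1$ or by re-quoting the auxiliary step) then all follow at once.
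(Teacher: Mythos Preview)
Your proof is correct and follows a genuinely different route from the paper's. The paper obtains \emph{lower} bounds $|b(t_0)\pm\gamma|\geq|\lambda\pm 1|$ by a localization argument: it fixes $\varphi\in\hbox{supp}(t_0,\gamma)$, computes $\varphi\Delta(b)=\lambda$, and then for each $\varepsilon>0$ constructs auxiliary functions $g_\varepsilon^{\pm}=\pm b(1-\chi_{_W})+\gamma\chi_{_W}\in A(t_0,\gamma)$ supported on a small clopen neighbourhood $W$ of $t_0$; evaluating $\|b-g_\varepsilon^{\pm}\|$ and letting $\varepsilon\to 0$ yields the lower bounds, after which Lemma~\ref{l technical distances in the sphere of C} pins down $b(t_0)$. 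The case $\lambda=-1$ is treated separately by the same localization, and the identity $\Delta(-\gamma\chi_{_A})=-\Delta(\gamma\chi_{_A})$ appears only at the end as a corollary.

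You invert this order: you establish $\Delta(-\gamma\chi_{_A})=-\Delta(\gamma\chi_{_A})$ first, via a clean support-functional argument (no $\varepsilon$'s, no auxiliary $g$'s), and then exploit it together with the isometry to get \emph{upper} bounds $|b(t_0)\pm\gamma|\leq|\lambda\pm 1|$ directly from $\|b\mp\gamma\chi_{_A}\|=|\lambda\mp 1|$. The parallelogram identity forces equality, and Lemma~\ref{l technical distances in the sphere of C} finishes as in the paper. Your route is shorter and bypasses the Stonean localization machinery in the main step; the paper's route, by contrast, never needs the auxiliary identity as an input and derives everything from support-functional estimates. Two small presentational gaps: at the start of ``For the general claim'' you should explicitly invoke Proposition~\ref{p characteristic of clopen sets 1} to record $b=b\chi_{_A}$ (part of the statement) and $|b(t_0)|=1$ on $A$ (needed for the parallelogram step); and since Lemma~\ref{l technical distances in the sphere of C} requires $0<\delta<2$, the cases $\lambda=\pm 1$ should be disposed of separately---trivially, from $|b(t_0)-\gamma|=0$ or $|b(t_0)+\gamma|=0$.
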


\begin{proof} Proposition \ref{p characteristic of clopen sets 1} implies that $b = b \chi_{_A}$ and $|b(t)|=1,$ for every $t\in A$.\smallskip

We assume first that $\lambda\neq \pm 1$ (i.e. $|\lambda -1|,|\lambda+1|\in (0,2)$ and $|\lambda -1|^2+|\lambda+1|^2 =4$).\smallskip

We fix an arbitrary $t_0\in A$. Let us observe the following property: For each $\varphi \in \hbox{supp}(t_0,\gamma),$ we have $\varphi\Delta(b) = \varphi (\lambda \Delta(\gamma \chi_{_A})) = \lambda  \varphi \Delta(\gamma \chi_{_A}) = \lambda$. Therefore, by Lemma \ref{l 3.5 FangWang supports for general compact Hausdorff spaces}, for each $g\in A(t_0, \gamma)$ and each $k\in A(t_0,-\gamma)$, we have $$|\lambda - 1| = | \varphi\Delta(b) - \varphi\Delta(g)| \leq \| \Delta(b) -  \Delta(g)\| = \|b- g\|,$$ and $$|\lambda + 1| = | \varphi\Delta(b) - \varphi\Delta(k)| \leq \| \Delta(b) -  \Delta(k)\| = \|b- k\|.$$ Since $A(t_0,\gamma) = -A(t_0,-\gamma),$ it follows that \begin{equation}\label{eq distance bigger than or equal to sqrt2}  |\lambda - 1| \leq  \|b- g\|, \hbox{ and } |\lambda + 1| \leq  \|b+ g\|, \hbox{ for all } g\in A(t_0,\gamma).
\end{equation}

For each $0<\varepsilon <1$, let us find a clopen set $W$ satisfying $$t_0\in W\subset \{s\in K : |b(s)-b(t_0)| <\varepsilon\}.$$ We consider the functions $g^{\pm}_{\varepsilon} = \pm b(1-\chi_{_W}) + \gamma \chi_{_W}\in S(C(K)),$ which clearly lie in $A(t_0,\gamma)$. By \eqref{eq distance bigger than or equal to sqrt2} we have $$|\lambda - 1| \leq \|b- g_{\varepsilon}^+\| = \sup_{s\in W} |b(s)- \gamma| \leq |b(t_0)- \gamma| + \varepsilon,$$
and
$$|\lambda + 1| \leq \|b+ g_{\varepsilon}^-\| = \sup_{s\in W}|b(s)+ \gamma| \leq |b(t_0)+ \gamma| + \varepsilon,$$
which implies that $|b(t_0)\pm \gamma| \geq |\lambda \pm 1|-\varepsilon.$ The arbitrariness of $0<\varepsilon <1$ gives $|b(t_0)\pm \gamma| \geq |\lambda \pm 1|.$ Since $|b(t_0)|=1$, we conclude that $b(t_0)\in \{\lambda \gamma,\overline{\lambda} \gamma\},$ for every $t_0\in A$ (cf. Lemma \ref{l technical distances in the sphere of C}). We have therefore shown that $\sigma(b)= b(K) \subseteq \{\lambda \gamma,\overline{\lambda} \gamma,0\}.$ The rest is clear.\smallskip

We deal now with $\lambda = \pm 1$. The statement is clear for $\lambda= 1$ with $b= \gamma \chi_{_A}$. Finally, let us assume that $\lambda =-1$. We fix an arbitrary $t_0\in A$. By repeating the previous arguments, or by Lemma \ref{l 3.5 FangWang supports for general compact Hausdorff spaces}, we deduce that, for each $\varphi \in \hbox{supp}(t_0,\gamma),$ we have $\varphi\Delta(b) = -1$, and thus $$  2=|-1 - 1| = | \varphi\Delta(b) - \varphi\Delta(g)| \leq \| \Delta(b) -  \Delta(g)\| = \|b- g\| \leq 2,$$ for every $g\in A(t_0,\gamma)$. As before, given $0<\varepsilon <1$, we consider a clopen set $W$ such that $t_0\in W\subset \{s\in K : |b(s)-b(t_0)| <\varepsilon\},$ and the function $g^{+}_{\varepsilon} =  b(1-\chi_{_W}) + \gamma \chi_{_W}\in A(t_0,\gamma)$. Since $$2 = \|b- g^{+}_{\varepsilon}\| = \sup_{s\in W} |b(s)- \gamma| \leq |b(t_0)- \gamma| + \varepsilon,$$ we deduce from the arbitrariness of $\varepsilon>0$ that $2\leq |b(t_0)- \gamma| \leq 2$, and thus $b(t_0)=- \gamma$. We have shown that $b(t_0)=- \gamma$ for every $t_0\in A$.
\end{proof}

We recall that a set $\{x_1,\ldots,x_k\}$ in a complex Banach space $X$ is called \emph{completely $M$-orthogonal} if $$\Big\|\sum_{j=1}^k \alpha_j x_j \Big\| = \max\{ \|\alpha_j x_j\| : 1\leq j\leq k\},$$ for every $\alpha_1,\ldots,\alpha_k$ in $\mathbb{C}$. If $\{x_1,\ldots,x_k\} \subset S(X)$, then it is completely $M$-orthogonal if and only if the equality $$\displaystyle \left\|\sum_{j=1}^k \lambda_j x_j \right\| = 1$$ holds for every $\lambda_1,\ldots,\lambda_k$ in $\mathbb{T}$ and $\lambda_{j_0}=1$ for some $j_0\in \{1,\ldots,k\}$  (see \cite[Lemma 3.4]{JVMorPeRa2017} and \cite[Lemma 2.3]{Pe2017}).\smallskip

We can now complete the information given in Proposition \ref{p characteristic of clopen sets 2}.

\begin{proposition}\label{p characteristic of clopen sets 3 for two disjoint}Suppose $K$ is a Stonean space. Let $A$ and $B$ be two non-empty disjoint clopen subsets of $K$.  Let $\Delta : S(C(K))\to S(X)$ be a surjective isometry, where $X$ is a complex Banach space. Then the following statements hold:
\begin{enumerate}[$(a)$]\item For every $\gamma,\mu\in \mathbb{T}$, the set $\{\Delta (\gamma \chi_{_A}),\Delta (\mu \chi_{_B})\}$ is completely $M$-orthogonal;
\item $\Delta (\sigma_1 \gamma \chi_{_A}+\sigma_2 \mu \chi_{_B}) = \sigma_1 \Delta (\gamma \chi_{_A})+\sigma_2 \Delta (\mu \chi_{_B})$, for every $\sigma_1,\sigma_2\in \{\pm 1\}$ and every $\gamma,\mu\in \mathbb{T}$;
\item For each $\lambda\in \mathbb{T}$, there exist two disjoint clopen sets $A_1$ and $A_2$ {\rm(}one of which could be empty{\rm)} such that $A = A_1 \cup A_2$, $$ \lambda \Delta( \chi_{_{A_1}} )+{\lambda}   \Delta( \chi_{_{A_2}}) = \Delta( \lambda \chi_{_{A_1}} )+  \Delta(\overline{\lambda}  \chi_{_{A_2}}) = \Delta( \lambda \chi_{_{A_1}} + \overline{\lambda}  \chi_{_{A_2}}) = \lambda \Delta(\chi_{_A}),$$ $\Delta( \lambda \chi_{_{A_1}}) =  \lambda \Delta( \chi_{_{A_1}})$, $\Delta( \lambda \chi_{_{A_2}} ) =\overline{\lambda} \Delta( \chi_{_{A_2}}),$
    $$ \overline{\lambda} \Delta( \chi_{_{A_1}} )+ \overline{\lambda}   \Delta( \chi_{_{A_2}}) = \Delta( \overline{\lambda} \chi_{_{A_1}} )+  \Delta({\lambda}  \chi_{_{A_2}}) = \Delta( \overline{\lambda} \chi_{_{A_1}} + {\lambda}  \chi_{_{A_2}}) = \overline{\lambda} \Delta(\chi_{_A}),$$
    $\Delta( \overline{\lambda} \chi_{_{A_1}}) =  \overline{\lambda} \Delta( \chi_{_{A_1}})$,
    and $\Delta( \overline{\lambda} \chi_{_{A_2}} ) ={\lambda} \Delta( \chi_{_{A_2}}).$
\end{enumerate}
\end{proposition}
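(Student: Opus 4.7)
The proof breaks into (a), (b), (c), with each part building on the previous. For (a), the strategy is to pull the norm of the linear combination back to $C(K)$ where disjoint supports trivialise the computation. Given $\lambda_1, \lambda_2 \in \mathbb{T}$, Proposition \ref{p characteristic of clopen sets 2} applied with scalars $\lambda_1$ and $-\lambda_2$ respectively produces $b_1, b_2' \in S(C(K))$ with $\Delta(b_1) = \lambda_1 \Delta(\gamma \chi_{_A})$ supported on $A$ and $\Delta(b_2') = -\lambda_2 \Delta(\mu \chi_{_B})$ supported on $B$. Since $A \cap B = \emptyset$ and $\Delta$ is isometric,
\[
\|\lambda_1 \Delta(\gamma \chi_{_A}) + \lambda_2 \Delta(\mu \chi_{_B})\| = \|\Delta(b_1) - \Delta(b_2')\| = \|b_1 - b_2'\| = 1,
\]
which is the complete $M$-orthogonality of the pair.

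For (b), the last assertion of Proposition \ref{p characteristic of clopen sets 2} gives $\sigma_j \Delta(\gamma \chi_{_A}) = \Delta(\sigma_j \gamma \chi_{_A})$ when $\sigma_j \in \{\pm 1\}$, so by (a) the vector $y := \Delta(\sigma_1 \gamma \chi_{_A}) + \Delta(\sigma_2 \mu \chi_{_B})$ belongs to $S(X)$. Writing $c := \Delta^{-1}(y)$, I determine $c$ pointwise via support functionals. For $t \in A$ and $\varphi \in \hbox{supp}(t, \sigma_1 \gamma)$ one has $\varphi(\Delta(\sigma_1 \gamma \chi_{_A})) = 1$ (since $\sigma_1 \gamma \chi_{_A} \in A(t, \sigma_1 \gamma)$) and $\varphi(\Delta(\sigma_2 \mu \chi_{_B})) = 0$ by Proposition \ref{p kernel of the support} (disjoint supports), so $\varphi(\Delta(c)) = 1$ and $c(t) = \sigma_1 \gamma$; analogously $c|_B = \sigma_2 \mu$. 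For $u \notin A \cup B$ every $\varphi \in \hbox{supp}(u, \nu)$ annihilates both summands of $y$, and Proposition \ref{p property of the support} (where Stoneanness enters decisively) forces $c(u) = 0$. Hence $c = \sigma_1 \gamma \chi_{_A} + \sigma_2 \mu \chi_{_B}$.

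For (c), I first apply Proposition \ref{p characteristic of clopen sets 2} to $\chi_{_A}$ with scalar $\lambda$ to obtain the decomposition $A = A_1 \cup A_2$ with $\Delta(\lambda \chi_{_{A_1}} + \overline{\lambda} \chi_{_{A_2}}) = \lambda \Delta(\chi_{_A})$. The argument of (b) extends essentially verbatim to arbitrary $\lambda_1, \lambda_2 \in \mathbb{T}$ in place of $\sigma_1, \sigma_2$ (using (a) for the norm bound and support functionals as in (b) for the pointwise identification); applied to the partition $A_1 \cup A_2$, together with (b) taken with $\sigma_1 = \sigma_2 = 1$, it yields
\[
\Delta(\lambda \chi_{_{A_1}}) + \Delta(\overline{\lambda} \chi_{_{A_2}}) = \Delta(\lambda \chi_{_{A_1}} + \overline{\lambda} \chi_{_{A_2}}) = \lambda \Delta(\chi_{_A}) = \lambda \Delta(\chi_{_{A_1}}) + \lambda \Delta(\chi_{_{A_2}}).
\]
To extract the pointwise equalities $\Delta(\lambda \chi_{_{A_1}}) = \lambda \Delta(\chi_{_{A_1}})$ and $\Delta(\lambda \chi_{_{A_2}}) = \overline{\lambda} \Delta(\chi_{_{A_2}})$, I apply Proposition \ref{p characteristic of clopen sets 2} separately to each $\lambda \Delta(\chi_{_{A_j}})$, producing pre-images $b_j = \lambda \chi_{_{A_j^c}} + \overline{\lambda} \chi_{_{A_j^j}}$ supported on $A_j$; the additivity $\Delta(b_1) + \Delta(b_2) = \Delta(b_1 + b_2)$ plus bijectivity of $\Delta$ force $b_1 + b_2 = \lambda \chi_{_{A_1}} + \overline{\lambda} \chi_{_{A_2}}$, and matching values (nontrivial only when $\lambda \neq \pm 1$, where $\lambda \neq \overline{\lambda}$) gives $A_1^j = A_2^c = \emptyset$.

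For the statements involving $\overline{\lambda}$ one must show that the same sets $A_1, A_2$ realize the $\overline{\lambda}$-decomposition. Decomposing $A_1 = A_1^{c'} \cup A_1^{j'}$ via Proposition \ref{p characteristic of clopen sets 2} with scalar $\overline{\lambda}$, the argument just described applied to $A_1$ yields $\Delta(\lambda \chi_{_{A_1^{j'}}}) = \overline{\lambda} \Delta(\chi_{_{A_1^{j'}}})$; simultaneously, a restriction lemma—that the linearity $\Delta(\lambda \chi_{_{A_1}}) = \lambda \Delta(\chi_{_{A_1}})$ descends to any clopen subset of $A_1$, proved by the same bijectivity-plus-additivity trick applied to the clopen partition $A_1 = A_1^{c'} \cup A_1^{j'}$—gives $\Delta(\lambda \chi_{_{A_1^{j'}}}) = \lambda \Delta(\chi_{_{A_1^{j'}}})$. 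Subtracting produces $(\lambda - \overline{\lambda}) \Delta(\chi_{_{A_1^{j'}}}) = 0$; since $\|\Delta(\chi_{_{A_1^{j'}}})\| = 1$ whenever $A_1^{j'} \neq \emptyset$, this forces $A_1^{j'} = \emptyset$ for $\lambda \neq \pm 1$, with the cases $\lambda = \pm 1$ immediate. An analogous argument handles $A_2$, and the remaining identities of (c) follow. The main obstacle is precisely this last step: verifying that the $\lambda$- and $\overline{\lambda}$-decompositions coincide, which rests on the restriction lemma and, at bottom, on Proposition \ref{p property of the support} together with the Stonean hypothesis.
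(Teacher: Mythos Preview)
Your arguments for (a) and (b) are correct and essentially coincide with the paper's: pull back via Proposition~\ref{p characteristic of clopen sets 2}, exploit disjoint supports, and identify the preimage pointwise through support functionals together with Proposition~\ref{p property of the support}. In (c) your overall strategy matches the paper's (decompose via Proposition~\ref{p characteristic of clopen sets 2}, then show the ``bad'' pieces of the sub-decomposition are empty), but the tactical details differ. Where the paper derives a contradiction of the form $|\lambda+\overline\lambda|=2$ using \cite[Corollary~2.2]{FangWang06}, you instead use injectivity of $\Delta$ and value-matching to force $A_1^j=A_2^c=\emptyset$; and where the paper compares the $\lambda$- and $\overline\lambda$-decompositions $A_1,A_2$ versus $A_3,A_4$ by computing $\varphi\Delta(\chi_{_A})$ two ways at a point of $A_1\cap A_4$, you invoke a restriction lemma and subtract to obtain $(\lambda-\overline\lambda)\Delta(\chi_{_{A_1^{j'}}})=0$. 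Both routes are valid, and yours is arguably a bit cleaner.

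One point needs care. The assertion that ``the argument of (b) extends essentially verbatim to arbitrary $\lambda_1,\lambda_2\in\mathbb{T}$ in place of $\sigma_1,\sigma_2$'' is not correct as literally stated: for $t_0\in A$ and $\varphi\in\hbox{supp}(t_0,\gamma)$ one obtains $\varphi(\lambda_1\Delta(\gamma\chi_{_A})+\lambda_2\Delta(\mu\chi_{_B}))=\lambda_1$, and from $\varphi\Delta(c)=\lambda_1\neq 1$ one cannot conclude $c(t_0)=\lambda_1\gamma$. Fortunately you do not actually need this; the displayed chain in your (c) follows already from (b) with $\sigma_1=\sigma_2=1$ (taking $\gamma=\lambda$, $\mu=\overline\lambda$, respectively $\gamma=\mu=1$). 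What you \emph{do} need later is the additivity $\Delta(b_1)+\Delta(b_2)=\Delta(b_1+b_2)$ for the specific functions $b_j=\lambda\chi_{_{A_j^c}}+\overline\lambda\chi_{_{A_j^j}}$, and this is indeed proved by the argument of (b) once you choose $\varphi\in\hbox{supp}(t_0,b_j(t_0))$ at each $t_0$ in the support of $b_j$ (so that $\varphi\Delta(b_j)=1$); the same applies to your restriction lemma. With this correction your proof of (c) stands.
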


\begin{proof}$(a)$ Let us take $\lambda,\mu, \gamma \in \mathbb{T}$. By Proposition \ref{p characteristic of clopen sets 2} there exist two disjoint clopen sets $A_1$ and $A_2$ such that $A = A_1 \cup A_2$ and $ \Delta( \lambda \gamma \chi_{_{A_1}} + \overline{\lambda} \gamma \chi_{_{A_2}}) = \lambda \Delta(\gamma \chi_{_A}).$ Therefore, by the hypothesis, we have $$\left\| \Delta(\mu \chi_{_B}) \pm \lambda \Delta(\gamma \chi_{_A}) \right\|= \left\|\Delta(\mu \chi_{_B}) - \Delta(\mp \lambda \gamma \chi_{_{A_1}} \mp \overline{\lambda} \gamma \chi_{_{A_2}}) \right\| $$ $$ = \left\|\mu\chi_{_B} \pm  \lambda \gamma \chi_{_{A_1}} \pm \overline{\lambda} \gamma \chi_{_{A_2}} \right\| =1, $$ which proves the statement.\smallskip

$(b)$ Let us fix $\sigma_1,\sigma_2\in \{\pm 1\}.$ Since, by $(a)$, $\{\Delta(\gamma \chi_{_{A}}),\Delta(\mu \chi_{_{B}})\}$ is completely $M$-orthogonal, it follows that $\sigma_1 \Delta(\gamma \chi_{_{A}} )+\sigma_2 \Delta(\mu \chi_{_{B}})\in S(X)$, and thus there exists $b\in S(C(K))$ satisfying $\Delta(b) = \sigma_1 \Delta(\gamma \chi_{_{A}} )+\sigma_2 \Delta(\mu \chi_{_{B}}).$ If we take $t_0\in K\backslash (A\cup B)$, an arbitrary element $\alpha$ of $\mathbb{T}$ and $\varphi\in\hbox{supp}(t_0,\alpha)$, then we have, via Proposition \ref{p kernel of the support}, that $\varphi\Delta(b) = \sigma_1 \varphi\Delta(\gamma \chi_{_{A}} )+\sigma_2 \varphi\Delta(\mu \chi_{_{B}})=0$ and Proposition \ref{p property of the support} concludes that $b= b\chi_{A\cup B}$ because of the arbitrariness of $t_0$.  By repeating the arguments in the proof of Proposition \ref{p characteristic of clopen sets 1} we get $|b(t) |=1,$ for all $t\in A\cup B$.\smallskip

Pick $t_0\in A$ and $\varphi \in \hbox{supp}(t_0,\sigma_1 \gamma)$. By Proposition \ref{p kernel of the support} we have $\varphi \Delta (b) = \varphi (\sigma_1 \Delta(\gamma  \chi_{_{A}})  + \sigma_2 \Delta(\mu \chi_{_{B}})) = 1,$ and hence $\Delta(b) \in \varphi^{-1} (\{1\})\cap \mathcal{B}_{X} = \Delta(A(t_0,\sigma_1 \gamma))$ (cf. Lemma \ref{l existence of support functionals for the image of a face}). This shows that $b(t_0) = \sigma_1 \gamma$ for all $t_0\in A$. Similarly, $b(t_0) = \sigma_2\mu $ for all $t_0\in B$. We have therefore shown that $b= \sigma_1 \gamma \chi_{_A}+\sigma_2 \mu \chi_{_B}$ and $$\Delta (\sigma_1 \gamma \chi_{_A}+\sigma_2 \mu \chi_{_B}) = \sigma_1 \Delta (\gamma \chi_{_A})+\sigma_2 \Delta (\mu \chi_{_B}).$$

$(c)$ We may assume that $\lambda \neq \pm 1$. Proposition \ref{p characteristic of clopen sets 2} proves the existence of two disjoint clopen sets $A_1$ and $A_2$ such that $A = A_1 \cup A_2$ and $$\Delta( \lambda \chi_{_{A_1}})  + \Delta(\overline{\lambda}  \chi_{_{A_2}}) = \Delta( \lambda \chi_{_{A_1}} + \overline{\lambda}  \chi_{_{A_2}}) = \lambda \Delta(\chi_{_A}) = \lambda \Delta(\chi_{_{A_1}}) + \lambda \Delta(\chi_{_{A_2}}),$$ where in the first and last equalities we have applied $(b)$. Therefore, $$\Delta( \lambda \chi_{_{A_1}})- \lambda \Delta(\chi_{_{A_1}})   =  \lambda \Delta(\chi_{_{A_2}}) - \Delta(\overline{\lambda}  \chi_{_{A_2}}). $$ We deduce from this identity and Proposition \ref{p kernel of the support} that \begin{equation}\label{eq 17012018} \varphi (\Delta( \lambda \chi_{_{A_1}})- \lambda \Delta(\chi_{_{A_1}}))   =  \varphi (\lambda \Delta(\chi_{_{A_2}}) - \Delta(\overline{\lambda}  \chi_{_{A_2}}))= 0,
 \end{equation} for every $t_0\in A_1,$ $\mu\in \mathbb{T}$ and $\varphi \in \hbox{supp} (t_0,\mu)$. However, a new application of Proposition \ref{p characteristic of clopen sets 2} assures the existence of disjoint clopen sets $A_{11}$ and $A_{12}$ such that $A_1 = A_{11} \cup A_{12},$ and $ \Delta( \lambda \chi_{_{A_{11}}} + \overline{\lambda}  \chi_{_{A_{12}}}) = \lambda \Delta(\chi_{_{A_{1}}})$, and by $(b)$, we get $$ \Delta( \lambda \chi_{_{A_{11}}}) + \Delta(\overline{\lambda}  \chi_{_{A_{12}}}) = \lambda \Delta(\chi_{_{A_{11}}}) + \lambda \Delta(\chi_{_{A_{12}}}).$$ If we can find $t_0\in A_{12}$, then by \eqref{eq 17012018}, Proposition \ref{p kernel of the support}, and $(b)$ we have $\varphi \Delta(\lambda \chi_{_{A_{12}}}) = \varphi \Delta ( \overline{\lambda} \chi_{_{A_{12}}})=1$, for every $\varphi\in \hbox{supp}(t_0, \overline{\lambda})$. Consequently, $$2 = \varphi \Delta(\lambda \chi_{_{A_{12}}}) + \varphi \Delta(g) \leq \|\Delta(\lambda \chi_{_{A_{12}}}) + \Delta(g) \| \leq 2,$$ for all $g\in A(t_0,\overline{\lambda})$. Corollary 2.2 in \cite{FangWang06} implies that $\|\lambda \chi_{_{A_{12}}} + g \| = 2,$ for all $g\in A(t_0,\overline{\lambda})$. In particular, for every clopen $W\subset A_{12}$ with $t_0\in W$ (taking $g= \overline{\lambda} \chi_{_W}$) we deduce the existence of $s_{_W}\in W$ such that $|\lambda + \overline{\lambda} | =2,$ which is impossible. Therefore $A_{12} =\emptyset,$ and thus $\Delta(\lambda \chi_{_{A_1}})= \lambda \Delta(\chi_{_{A_1}})$.\smallskip

Similar arguments lead to $\Delta(\lambda \chi_{_{A_2}})= \overline{\lambda} \Delta(\chi_{_{A_2}}).$\smallskip

We shall finally prove the last identities. By the above arguments there exist disjoint clopen sets $A_3$ and $A_4$ {\rm(}one of which could be empty{\rm)} such that $A = A_3 \cup A_4$, $$ \overline{\lambda} \Delta( \chi_{_{A_3}} )+ \overline{\lambda}   \Delta( \chi_{_{A_4}}) = \Delta( \overline{\lambda} \chi_{_{A_3}} )+  \Delta({\lambda}  \chi_{_{A_4}}) = \Delta( \overline{\lambda} \chi_{_{A_3}} + {\lambda}  \chi_{_{A_4}}) = \overline{\lambda} \Delta(\chi_{_A}),$$
    $\Delta( \overline{\lambda} \chi_{_{A_3}}) =  \overline{\lambda} \Delta( \chi_{_{A_3}})$,
    and $\Delta( \overline{\lambda} \chi_{_{A_4}} ) ={\lambda} \Delta( \chi_{_{A_4}}).$ We shall finish by proving that $A_1 = A_3$ and $A_2= A_4$.
If there exists $t_0\in A_1\cap A_4$, we pick $\varphi\in \hbox{supp}(t_0, \lambda)$ and, by Proposition \ref{p kernel of the support}, we compute $$\varphi \Delta(\chi_{_{A}}) = \varphi \left(\lambda \Delta( \overline{\lambda} \chi_{_{A_3}} + {\lambda}  \chi_{_{A_4}}) \right) = \varphi \left(\lambda  \Delta( \overline{\lambda} \chi_{_{A_3}} )+ \lambda \Delta({\lambda}  \chi_{_{A_4}}) \right) =\lambda,$$ and $$\varphi \Delta(\chi_{_{A}}) = \varphi \left(\overline{\lambda} \Delta( {\lambda} \chi_{_{A_1}} + \overline{\lambda}  \chi_{_{A_2}}) \right) = \varphi \left(\overline{\lambda}  \Delta( {\lambda} \chi_{_{A_1}} )+ \overline{\lambda} \Delta(\overline{\lambda}  \chi_{_{A_2}}) \right) =\overline{\lambda},$$ which is impossible because $\lambda\neq \pm 1$. This shows that $A_1=A_3$ and $A_2=A_4$.
\end{proof}

An appropriate generalization of \cite[Proposition 3.3]{JVMorPeRa2017} is established next.

\begin{proposition}\label{p characteristic of clopen sets 3}  Suppose $K$ is a Stonean space, $A$ is a non-empty clopen subset of $K$, $\lambda\in \mathbb{T}\backslash \mathbb{R}$, and $X$ is a complex Banach space. Let $\Delta : S(C(K))\to S(X)$ be a surjective isometry. We additionally assume that $\Delta(\lambda\chi_{_A}) = \lambda \Delta(\chi_{_A})$ {\rm(}respectively, $\Delta(\lambda\chi_{_A}) = \overline{\lambda} \Delta(\chi_{_A})${\rm)}. Then $\Delta(\mu\chi_{_A}) = \mu \Delta(\chi_{_A})$ {\rm(}respectively, $\Delta(\mu\chi_{_A}) = \overline{\mu} \Delta(\chi_{_A})${\rm)}, for every $\mu \in \mathbb{T}$. Furthermore, if $B$ is another non-empty clopen set in $K$ contained in $A$, then $\Delta(\mu\chi_{_B}) = \mu \Delta(\chi_{_B})$ {\rm(}respectively, $\Delta(\mu\chi_{_B}) = \overline{\mu} \Delta(\chi_{_B})${\rm)}, for every $\mu \in \mathbb{T}$.
\end{proposition}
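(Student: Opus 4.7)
The plan is to exploit the tight identification, provided by Proposition \ref{p characteristic of clopen sets 2}, of every preimage under $\Delta$ of $\mu\Delta(\chi_{_A})$: such a preimage must have the very special form $b=\mu\chi_{_{A_1}}+\overline{\mu}\chi_{_{A_2}}$ for some clopen decomposition $A=A_1\cup A_2$. The whole game is to force $A_2=\emptyset$ using the hypothesis on $\lambda$, which will be achieved by computing the distance $\|b-\lambda\chi_{_A}\|$ (and also $\|b-\overline{\lambda}\chi_{_A}\|$) in two different ways and comparing them.

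I would first observe that the hypothesis $\Delta(\lambda\chi_{_A})=\lambda\Delta(\chi_{_A})$, combined with Proposition \ref{p characteristic of clopen sets 3 for two disjoint}$(c)$ applied to $\lambda$ on $A$, forces the $A_2$-piece of the decomposition arising from $(c)$ to be empty: equating the two displayed expressions for $\Delta(\lambda\chi_{_A})$ yields $(\overline{\lambda}-\lambda)\Delta(\chi_{_{A_2}})=0$, which is impossible unless $A_2=\emptyset$. The remaining identities in $(c)$ then automatically give $\Delta(\overline{\lambda}\chi_{_A})=\overline{\lambda}\Delta(\chi_{_A})$, providing a second ``free'' identity that will be essential below.

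Now fix $\mu\in\mathbb{T}\setminus\{\pm 1\}$, choose $b\in S(C(K))$ with $\Delta(b)=\mu\Delta(\chi_{_A})$, and write $b=\mu\chi_{_{A_1}}+\overline{\mu}\chi_{_{A_2}}$ via Proposition \ref{p characteristic of clopen sets 2}. Assume for contradiction that $A_2\neq\emptyset$. The isometry property yields $\|b-\lambda\chi_{_A}\|=|\mu-\lambda|$, while the direct sup-norm calculation, using that $\chi_{_{A_1}}$ and $\chi_{_{A_2}}$ have disjoint supports, gives $\max(|\mu-\lambda|,|\overline{\mu}-\lambda|)$ when both pieces are present (or $|\overline{\mu}-\lambda|$ when $A_1=\emptyset$); either way $|\overline{\mu}-\lambda|\leq|\mu-\lambda|$. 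The same argument with $\overline{\lambda}$ in place of $\lambda$ gives the reverse inequality $|\mu-\lambda|\leq|\overline{\mu}-\lambda|$, so $|\mu-\lambda|=|\overline{\mu}-\lambda|$; an elementary trigonometric computation (equivalently, the geometric fact that this forces $\mu$ to lie on the perpendicular bisector of $\{\lambda,\overline{\lambda}\}$, which is the real axis) shows this is possible only when $\mu\in\mathbb{R}$, contradicting $\mu\notin\{\pm 1\}$. Hence $A_2=\emptyset$, $b=\mu\chi_{_A}$, and $\Delta(\mu\chi_{_A})=\mu\Delta(\chi_{_A})$. The boundary values $\mu=\pm 1$ are trivial ($\mu=1$) or covered by Proposition \ref{p characteristic of clopen sets 2} ($\mu=-1$).

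For the ``furthermore'' part it suffices to prove $\Delta(\lambda\chi_{_B})=\lambda\Delta(\chi_{_B})$ for every non-empty clopen $B\subseteq A$; once this is in hand the main claim, just proved, applies verbatim to $B$ and yields $\Delta(\mu\chi_{_B})=\mu\Delta(\chi_{_B})$ for all $\mu\in\mathbb{T}$. For $B\subsetneq A$ I would apply Proposition \ref{p characteristic of clopen sets 3 for two disjoint}$(c)$ separately to $\lambda$ on $B$ and on $A\setminus B$, splitting $B=B_1\cup B_2$ and $A\setminus B=C_1\cup C_2$, iterate the additivity in $(b)$ to expand both $\Delta(\lambda\chi_{_A})$ and $\lambda\Delta(\chi_{_A})$ as sums over the four pieces, and equate; the outcome is $(\overline{\lambda}-\lambda)(\Delta(\chi_{_{B_2}})+\Delta(\chi_{_{C_2}}))=0$, hence $\Delta(\chi_{_{B_2}})+\Delta(\chi_{_{C_2}})=0$, which by the complete $M$-orthogonality supplied by Proposition \ref{p characteristic of clopen sets 3 for two disjoint}$(a)$ forces both $B_2$ and $C_2$ to be empty. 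The ``respectively'' versions of both steps go through by the entirely symmetric argument, with the roles of the two pieces of each decomposition interchanged. The main obstacle I anticipate is the careful bookkeeping of empty versus non-empty sub-cases throughout, since Proposition \ref{p characteristic of clopen sets 3 for two disjoint}$(b)$ is formally stated for \emph{non-empty} pairs and must be iterated with any empty components silently discarded.
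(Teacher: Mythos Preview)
Your argument is correct and takes a genuinely different route from the paper's in both halves. For the first assertion, the paper applies Proposition~\ref{p characteristic of clopen sets 3 for two disjoint}$(c)$ to $\mu$ and then computes $|\lambda+\mu|=\|\lambda\Delta(\chi_{_A})+\mu\Delta(\chi_{_A})\|$ via complete $M$-orthogonality of the pieces, eventually reaching $\Re\hbox{e}(\lambda\overline{\mu})=\Re\hbox{e}(\lambda\mu)$; you instead first extract the free identity $\Delta(\overline{\lambda}\chi_{_A})=\overline{\lambda}\Delta(\chi_{_A})$ and then compare $\|b-\lambda\chi_{_A}\|$ and $\|b-\overline{\lambda}\chi_{_A}\|$ directly, which is shorter and avoids the $M$-orthogonality machinery entirely. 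For the ``furthermore'' clause the paper works with an arbitrary $\mu$ from the outset, decomposes only $B$, and closes the argument with a support-functional computation showing $\overline{\mu}\chi_{_{B_2}}\in A(t_0,\mu)$ is impossible; you instead reduce to proving the single identity $\Delta(\lambda\chi_{_B})=\lambda\Delta(\chi_{_B})$ (after which the first part bootstraps to all $\mu$), obtain it by decomposing both $B$ and $A\setminus B$, and reach the contradiction purely from $M$-orthogonality, bypassing support functionals. Both routes work; yours is somewhat more self-contained, while the paper's keeps the decomposition localized to $B$ at the cost of invoking Lemma~\ref{l existence of support functionals for the image of a face} and Proposition~\ref{p kernel of the support}. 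Your caveat about silently discarding empty pieces when iterating Proposition~\ref{p characteristic of clopen sets 3 for two disjoint}$(b)$ is apt and is the only place needing care.
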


\begin{proof} We shall only prove the case in which $\Delta(\lambda\chi_{_A}) = \lambda \Delta(\chi_{_A})$, the other statement is very similar. Let us take $\mu\in\mathbb{T}$. If $\mu=\pm 1$, then it is clear that the statement holds by Proposition \ref{p characteristic of clopen sets 2}. We can therefore assume that $\mu\in\mathbb{T}\backslash \RR$. Proposition \ref{p characteristic of clopen sets 3 for two disjoint}$(c)$ proves the existence of two disjoint clopen sets $A_1$ and $A_2$ (one of which could be empty) such that $A=A_1\cup A_2$,
$$ \mu \Delta( \chi_{_{A_1}} )+\mu   \Delta( \chi_{_{A_2}}) = \Delta( \mu \chi_{_{A_1}} )+  \Delta(\overline{\mu}  \chi_{_{A_2}}) = \Delta( \mu \chi_{_{A_1}} + \overline{\mu}  \chi_{_{A_2}}) = \mu \Delta(\chi_{_A}),$$ $\Delta( \mu \chi_{_{A_1}}) =  \mu \Delta( \chi_{_{A_1}})$, and $\Delta( \mu \chi_{_{A_2}} ) =\overline{\mu} \Delta( \chi_{_{A_2}}).$\smallskip

We claim that $A_2=\emptyset$. Otherwise, by Proposition \ref{p characteristic of clopen sets 3 for two disjoint} we have
$$|\lambda+\mu|= \|\lambda \Delta(\chi_{_{A}}) + \mu \Delta(\chi_{_{A}}) \|=\| \Delta( \lambda \chi_{_{A_1}} )+ \Delta(\lambda   \chi_{_{A_2}}) + \Delta(\mu \chi_{_{A_1}} )+ \Delta( \overline{\mu}  \chi_{_{A_2}}) \| $$ $$
=\max\{ \| \Delta( \lambda \chi_{_{A_1}} )+  \Delta(\mu \chi_{_{A_1}} ) \|, \|  \Delta(\lambda   \chi_{_{A_2}}) +  \Delta( \overline{\mu}  \chi_{_{A_2}}) \|\} =\hbox{(by Proposition \ref{p characteristic of clopen sets 2})}$$
$$=\max\{ \| \lambda \chi_{_{A_1}} + \mu \chi_{_{A_1}}  \|, \| \lambda   \chi_{_{A_2}} + \overline{\mu}  \chi_{_{A_2}} \|\}= \max\{ |\lambda+\mu|, |\lambda+\overline{\mu}|\},$$ and hence $|\lambda+\overline{\mu}|\leq |\lambda+{\mu}|$. By replacing $\mu$ with $-\mu$ in the above arguments we get $|\lambda-\overline{\mu}|\leq |\lambda-{\mu}|$. Combining the last two inequalities we have $\Re\hbox{e}(\lambda\overline{\mu})= \Re\hbox{e}(\lambda\mu),$ or equivalently, $\lambda\overline{\mu}+\overline{\lambda}\mu = \lambda\mu + \overline{\lambda}\overline{\mu}$, which holds if and only if $\mu(\overline{\lambda}-\lambda)=\overline{\mu}(\overline{\lambda}-\lambda)$ and $\lambda(\overline{\mu}-\mu)=\overline{\lambda}(\overline{\mu}-\mu)$. The last equalities hold if and only if $\lambda,\mu\in\RR$, which is impossible.

For the second statement, let us take a non-empty clopen set $B$ in $K$ contained in $A$. We assume $\Delta(\lambda\chi_{_A}) = \lambda \Delta(\chi_{_A})$ {\rm(}respectively, $\Delta(\lambda\chi_{_A}) = \overline{\lambda} \Delta(\chi_{_A})${\rm)}. The desired equality is clear if $\mu=\pm 1,$ we thus assume that $\mu\in\mathbb{T}\backslash\RR$. Proposition \ref{p characteristic of clopen sets 3 for two disjoint}$(c)$ guarantees the existence of two disjoint clopen sets $B_1,B_2$ in $K$ such that $B=B_1\cup B_2$ and $\mu\Delta(\chi_{_B}) = \Delta(\mu\chi_{_{B_1}})+ \Delta(\overline{\mu}\chi_{_{B_2}}) $. Observe that $A=B_1\cup B_2 \cup (A\backslash B)$ and that $A\backslash B=A\cap (K\backslash B)$ is a clopen set in $K$. We therefore have $$\mu\Delta(\chi_{_{A}})=\mu\Delta(\chi_{_{B_1}})+\mu\Delta(\chi_{_{B_2}})+\mu\Delta(\chi_{_{A\backslash B}})=\Delta(\mu\chi_{_{B_1}})+\Delta(\overline{\mu}\chi_{_{B_2}})+\mu\Delta(\chi_{_{A\backslash B}})$$ and, by applying the first conclusion in this proposition and Proposition \ref{p characteristic of clopen sets 3 for two disjoint} we deduce that $$\mu\Delta(\chi_{_{A}})=\Delta(\mu\chi_{_{A}})=\Delta(\mu\chi_{_{B_1}})+\Delta(\mu\chi_{_{B_2}})+\Delta(\mu\chi_{_{A\backslash B}})$$ (respectively, $\mu\Delta(\chi_{_{A}})=\Delta(\overline{\mu}\chi_{_{A}})=\Delta(\overline{\mu}\chi_{_{B_1}})+\Delta(\overline{\mu}\chi_{_{B_2}})+\Delta(\overline{\mu}\chi_{_{A\backslash B}})$). Then the identity
$$
\Delta (\overline{\mu}\chi_{_{B_2}})+\mu\Delta (\chi_{_{A\backslash B}})=\Delta (\mu\chi_{_{B_2}})+\Delta (\mu\chi_{_{A\backslash B}})
$$
(respectively,
$$
\Delta (\mu\chi_{_{B_1}})+\mu\Delta (\chi_{_{A\backslash B}})=\Delta (\overline{\mu}\chi_{_{B_1}})+\Delta (\overline{\mu}\chi_{_{A\backslash B}}))
$$ holds. Therefore, $\varphi(\Delta (\overline{\mu}\chi_{_{B_2}})-\Delta (\mu\chi_{_{B_2}}))=\varphi(\Delta (\mu\chi_{_{A\backslash B}})-\mu\Delta (\chi_{_{A\backslash B}}))=0,$ for every $t_0\in B_2$, $\gamma\in \mathbb{T}$ and $\varphi\in\hbox{supp}(t_0, \gamma)$. If we can find $t_0\in B_2$, then for $\gamma=\mu$ we have $\varphi(\Delta(\overline{\mu}\chi_{_{B_2}}))=\varphi(\Delta(\mu\chi_{_{B_2}}))=1,$ and hence $\Delta(\overline{\mu}\chi_{_{B_2}})\in \varphi^{-1} (\{1\})\cap \mathcal{B}_{X} = \Delta(A(t_0,\mu))$ (cf. Lemma \ref{l existence of support functionals for the image of a face}). Thus $\overline{\mu}\chi_{_{B_2}}\in A(t_0,\mu)$, which is impossible. We have shown that $B_2=\emptyset,$ and hence $B=B_1$ and $\Delta(\mu\chi_{_B}) = \mu \Delta(\chi_{_B})$.\smallskip

In the case $\Delta(\lambda\chi_{_A}) = \overline{\lambda} \Delta(\chi_{_A})$, similar arguments prove that $\Delta(\mu\chi_{_B}) = \overline{\mu}\Delta(\chi_{_B})$.
\end{proof}

A first corollary of the above proposition plays a fundamental role in our argument.

\begin{corollary}\label{c characteristic of clopen sets 2 a}  Suppose $K$ is a Stonean space and $X$ is a complex Banach space. Let $\Delta : S(C(K))\to S(X)$ be a surjective isometry. Then there exists a clopen subset $K_1\subseteq K$ such that $\Delta(\lambda \chi_{_{K_1}}) =\lambda \Delta(\chi_{_{K_1}})$ and $\Delta(\lambda \chi_{_{K\backslash K_1}}) =\overline{\lambda} \Delta(\chi_{_{K\backslash K_1}})$, for every $\lambda\in \mathbb{T}$. Consequently, if $B_1$ is a clopen subset of $K$ contained in $K_1$ and $B_2$ is a clopen subset of $K$ contained in $K_2=K\backslash K_1$, then $\Delta(\mu\chi_{_{B_1}})=\mu\Delta(\chi_{_{B_1}})$ and $\Delta(\mu\chi_{_{B_2}})=\overline{\mu}\Delta(\chi_{_{B_2}}),$ for every $\mu\in\mathbb{T}$.
\end{corollary}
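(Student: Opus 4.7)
The plan is to obtain the global decomposition $K=K_1\sqcup K_2$ and the single-$\lambda_0$ linearity/conjugate-linearity identities by splitting $K$ into two arbitrary non-empty clopen halves, applying Proposition \ref{p characteristic of clopen sets 3 for two disjoint}$(c)$ inside each half, and ``gluing'' the pieces together with Proposition \ref{p characteristic of clopen sets 3 for two disjoint}$(b)$. The upgrade from a single $\lambda_0$ to all $\mu\in\mathbb{T}$, together with the ``Consequently'' clause, then come for free from Proposition \ref{p characteristic of clopen sets 3}.

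First I would dispose of the trivial cases: if $K=\emptyset$ take $K_1=\emptyset$; if $K$ consists of a single point then $S(C(K))=\mathbb{T}$ and any surjective isometry $\mathbb{T}\to S(X)$ is either $\lambda\mapsto c\lambda$ or $\lambda\mapsto c\overline{\lambda}$ for some $c\in\mathbb{T}$, giving $K_1=K$ or $K_1=\emptyset$ respectively. Assume henceforth $|K|\ge 2$; since $K$ is Stonean (hence zero-dimensional) there is a non-empty proper clopen subset $A\subset K$, and $B:=K\setminus A$ is also non-empty and clopen. Fix $\lambda_0\in\mathbb{T}\setminus\mathbb{R}$ and apply Proposition \ref{p characteristic of clopen sets 3 for two disjoint}$(c)$ to the disjoint pair $A,B$ with $\lambda=\lambda_0$ to obtain disjoint clopen $A_1,A_2$ (possibly empty) with $A=A_1\cup A_2$,
\begin{equation*}
\Delta(\lambda_0\chi_{_{A_1}})=\lambda_0\Delta(\chi_{_{A_1}}),\qquad \Delta(\lambda_0\chi_{_{A_2}})=\overline{\lambda_0}\Delta(\chi_{_{A_2}}).
\end{equation*}
A second application of $(c)$ with the roles of $A$ and $B$ interchanged yields an analogous decomposition $B=B_1\cup B_2$, and I set $K_1:=A_1\cup B_1$ and $K_2:=A_2\cup B_2$.

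The single-$\lambda_0$ identity for $K_1$ then follows from two applications of Proposition \ref{p characteristic of clopen sets 3 for two disjoint}$(b)$ to the disjoint non-empty clopen pair $A_1,B_1$, once for the $\lambda_0$-scaled characteristic functions and once for the unscaled ones:
\begin{equation*}
\Delta(\lambda_0\chi_{_{K_1}})=\Delta(\lambda_0\chi_{_{A_1}})+\Delta(\lambda_0\chi_{_{B_1}})=\lambda_0\bigl(\Delta(\chi_{_{A_1}})+\Delta(\chi_{_{B_1}})\bigr)=\lambda_0\Delta(\chi_{_{K_1}}),
\end{equation*}
and symmetrically $\Delta(\lambda_0\chi_{_{K_2}})=\overline{\lambda_0}\Delta(\chi_{_{K_2}})$. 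Emptiness of any of $A_1,A_2,B_1,B_2$ is handled by dropping the corresponding summand in this computation.

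Finally, because $\lambda_0\in\mathbb{T}\setminus\mathbb{R}$, the first half of Proposition \ref{p characteristic of clopen sets 3} upgrades the two identities above to $\Delta(\mu\chi_{_{K_1}})=\mu\Delta(\chi_{_{K_1}})$ and $\Delta(\mu\chi_{_{K_2}})=\overline{\mu}\Delta(\chi_{_{K_2}})$ for every $\mu\in\mathbb{T}$, and its ``Furthermore'' clause applied with $A=K_1$ (respectively $A=K_2$) delivers the ``Consequently'' statement for every clopen $B_1\subseteq K_1$ and $B_2\subseteq K_2$. The only genuine difficulty is the bookkeeping of the possibly empty auxiliary pieces $A_j,B_j$ and the reduction to the case $|K|\ge 2$; beyond that, the argument is a direct assembly of the preceding propositions.
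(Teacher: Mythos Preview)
Your proposal is correct and follows essentially the same route as the paper, which merely cites Propositions~\ref{p characteristic of clopen sets 3 for two disjoint} and~\ref{p characteristic of clopen sets 3}; you are simply more scrupulous about the formal hypothesis in Proposition~\ref{p characteristic of clopen sets 3 for two disjoint} that both clopen sets be non-empty, which leads you to split $K$ and glue via part~(b), whereas the paper implicitly uses that the proof of part~(c) never touches the second set $B$ and so applies directly with $A=K$. One small imprecision: in your singleton case the constant $c$ should lie in $S(X)$ rather than in $\mathbb{T}$, and the claimed dichotomy for $\Delta$ is not self-evident---but it follows at once from Proposition~\ref{p characteristic of clopen sets 2} with $A=K$ (giving $\Delta^{-1}(\lambda_0\Delta(1))\in\{\lambda_0,\overline{\lambda_0}\}$) together with Proposition~\ref{p characteristic of clopen sets 3}.
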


\begin{proof} The proof follows straightforwardly from Propositions \ref{p characteristic of clopen sets 3 for two disjoint} and \ref{p characteristic of clopen sets 3}.
\end{proof}

From now on, given a surjective isometry $\Delta : S(C(K))\to S(X)$ where $K$ is a Stonean space and $X$ is a complex Banach space, the symbols $K_1$ and $K_2$ will denote the clopen subsets given by Corollary \ref{c characteristic of clopen sets 2 a}. Under these hypothesis we define a new product $\odot: \mathbb{C}\times C(K)\to C(K)$ given by \begin{equation}\label{eq def product odot} (\alpha \odot a) (t) := \alpha\ a(t), \hbox{ if } t\in K_1,\hbox{ and } (\alpha \odot a) (t) := \overline{\alpha}\  a(t), \hbox{ otherwise}.
\end{equation} We observe that $\alpha \odot a= \alpha \ a$ whenever $\alpha\in \mathbb{R}$.\smallskip

Our next results are devoted to determine the behaviour of a surjective isometry $\Delta : S(C(K))\to S(X)$ on algebraic elements.

\begin{proposition}\label{p image of addition 1}
Suppose $K$ is a Stonean space, $\gamma_1,\ldots,\gamma_n\in \mathbb{T}$, $B_1,\ldots,B_n$ are non-empty disjoint clopen subsets of $K,$ and $X$ is a complex Banach space. Let $\Delta : S(C(K))\to S(X)$ be a surjective isometry and let $\displaystyle v=\sum_{k=1}^{m}\lambda_k\chi_{_{A_{k}}}$ be an algebraic partial isometry in $C(K)$, where $\lambda_1, \ldots, \lambda_m\in \mathbb{T}$, $A_1,\ldots,A_m$ are non-empty disjoint clopen sets in $K$ such that $A_k\cap B_j=\emptyset$, for every $k\in\{1,\ldots,m\}$ and every $ j\in\{1,\ldots,n\}$. Then the set $\{ \Delta(v), \Delta(\gamma_1\chi_{_{B_{1}}}), \ldots, \Delta(\gamma_n\chi_{_{B_{n}}}) \}$ is completely $M$-orthogonal, and the equality $$\Delta(v) + \sum_{j=1}^{n}\Delta(\gamma_j\chi_{_{B_{j}}}) = \Delta\left(v + \sum_{j=1}^{n}\gamma_j\chi_{_{B_{j}}}\right)$$ holds.
\end{proposition}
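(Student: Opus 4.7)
The proposition contains two separate assertions: complete $M$-orthogonality of the $(n+1)$-element set $\{\Delta(v),\Delta(\gamma_1\chi_{_{B_{1}}}),\ldots,\Delta(\gamma_n\chi_{_{B_{n}}})\}$, and the additive identity $\Delta(v+\sum_{j}\gamma_j\chi_{_{B_{j}}}) = \Delta(v) + \sum_{j}\Delta(\gamma_j\chi_{_{B_{j}}})$. My plan is to prove the additive identity first by induction on $n$, and then deduce the $M$-orthogonality from it. The inductive step on $n$ is a routine iteration of a single-addition claim: given an algebraic partial isometry $u\in S(C(K))$ and a non-empty clopen $B\subseteq K$ disjoint from the support of $u$, together with $\gamma\in\mathbb{T}$, show that $\Delta(u+\gamma\chi_{_B}) = \Delta(u) + \Delta(\gamma\chi_{_B})$. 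The base case $m=n=1$ is Proposition \ref{p characteristic of clopen sets 3 for two disjoint}$(b)$.

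To prove the single-addition claim I would first compute the norm of the candidate image. The last assertion in Proposition \ref{p characteristic of clopen sets 2} gives $\Delta(-\gamma\chi_{_B}) = -\Delta(\gamma\chi_{_B})$, and since $\Delta$ is an isometry,
\[
\|\Delta(u)+\Delta(\gamma\chi_{_B})\| \,=\, \|\Delta(u)-\Delta(-\gamma\chi_{_B})\| \,=\, \|u+\gamma\chi_{_B}\| \,=\, 1,
\]
the last equality holding because $u$ and $\gamma\chi_{_B}$ are supported on disjoint clopen sets with nonzero values of modulus $1$. Hence there exists $b\in S(C(K))$ with $\Delta(b)=\Delta(u)+\Delta(\gamma\chi_{_B})$, and it remains to identify $b=u+\gamma\chi_{_B}$. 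I would test $b$ pointwise against support functionals: for $t_0$ outside all the supports, every $\varphi\in\hbox{supp}(t_0,\mu)$ annihilates both summands by Proposition \ref{p kernel of the support}, so Proposition \ref{p property of the support} forces $b(t_0)=0$; for $t_0\in A_k$ with $u(t_0)=\lambda_k$, any $\varphi\in\hbox{supp}(t_0,\lambda_k)$ pairs to $1$ with $\Delta(u)$ and to $0$ with $\Delta(\gamma\chi_{_B})$, whence $\Delta(b)\in\Delta(A(t_0,\lambda_k))$ by Lemma \ref{l existence of support functionals for the image of a face} and thus $b(t_0)=\lambda_k$; symmetrically $b(t_0)=\gamma$ for $t_0\in B$. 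Iterating this single-addition step across $j=1,\ldots,n$ delivers the full additive identity.

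With the additive identity in hand, I deduce complete $M$-orthogonality. I need $\|\alpha_0\Delta(v)+\sum_j\alpha_j\Delta(\gamma_j\chi_{_{B_{j}}})\|=1$ whenever $\alpha_0,\ldots,\alpha_n\in\mathbb{T}$ and some $\alpha_{i_0}=1$. The lower bound is automatic, obtained by pairing with a support functional at a point of $A_{k_0}$ (if $\alpha_0=1$) or of $B_{j_0}$, and invoking Propositions \ref{p kernel of the support} and \ref{l 3.5 FangWang supports for general compact Hausdorff spaces}. For the upper bound I would combine Proposition \ref{p characteristic of clopen sets 3 for two disjoint}$(b)$ with Corollary \ref{c characteristic of clopen sets 2 a} to obtain the rescaling identity $\alpha\Delta(\mu\chi_{_E})=\Delta(\alpha\odot\mu\chi_{_E})$ for a single clopen piece $E$ (where $\odot$ is the twisted product defined after Corollary \ref{c characteristic of clopen sets 2 a}); extending this termwise through the additive identity just proved yields $\alpha\Delta(v)=\Delta(\alpha\odot v)$ for any algebraic partial isometry $v$. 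The full sum therefore equals $\Delta\bigl(\alpha_0\odot v + \sum_j \alpha_j\odot\gamma_j\chi_{_{B_j}}\bigr)$, which is $\Delta$ of a single element of $S(C(K))$ and hence of norm $1$.

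The main obstacle is the pointwise identification of $b$ in the single-addition step: it rests on having sufficiently many support functionals to separate points on $K$, which is exactly what Propositions \ref{p kernel of the support} and \ref{p property of the support} provide in the Stonean setting. A secondary delicate point is that the $\odot$-rescaling identity must be extended from single characteristic functions to finite sums only at scales already covered by the inductive hypothesis, so that the deduction of $M$-orthogonality from the additive identity does not become circular.
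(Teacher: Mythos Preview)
Your argument is correct and the single-addition step is essentially identical to the paper's: both identify the preimage $b$ of $\Delta(u)+\Delta(\gamma\chi_{_B})$ pointwise via Propositions~\ref{p kernel of the support} and~\ref{p property of the support} together with Lemma~\ref{l existence of support functionals for the image of a face}. Where you differ is in organization and in the $M$-orthogonality half. The paper interleaves the two assertions in a single induction: at each stage $n$ it first proves $M$-orthogonality (using Proposition~\ref{p characteristic of clopen sets 2} to write $\mu_j\Delta(\gamma_j\chi_{_{B_j}})$ as $\Delta$ of a split element $\mu_j\gamma_j\chi_{_{B_{j_1}}}+\overline{\mu_j}\gamma_j\chi_{_{B_{j_2}}}$, the split depending on $\mu_j$), then uses that to know the candidate sum has norm~$1$, then identifies the preimage. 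You instead dispatch the additive identity for all $n$ first---getting norm~$1$ cheaply from the special case $\Delta(-\gamma\chi_{_B})=-\Delta(\gamma\chi_{_B})$---and only afterwards deduce $M$-orthogonality via the global $K_1/K_2$ decomposition of Corollary~\ref{c characteristic of clopen sets 2 a} and the $\odot$ product. Your route is a bit cleaner for the additive identity (no need for full $M$-orthogonality up front) but invokes slightly heavier machinery for the second assertion; the paper's route stays with the more local Proposition~\ref{p characteristic of clopen sets 2} throughout and never appeals to the $K_1/K_2$ split. Both are valid, and your remark about avoiding circularity is well taken: since you establish the additive identity for all $n$ before touching $M$-orthogonality, the termwise extension of $\alpha\Delta(v)=\Delta(\alpha\odot v)$ is legitimate.
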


\begin{proof}
We shall prove the statement arguing by induction on $n$. In the case $n=1,$ let us take $\mu_1\in\mathbb{T}$. Since $B_1$ is a non-empty clopen set, by Proposition \ref{p characteristic of clopen sets 2} there exist two disjoint clopen sets $B_{11}$ and $B_{12}$ such that $B_1=B_{11}\cup B_{12}$ and $\mu_1\Delta(\gamma_1\chi_{_{B_{1}}})=\Delta( \mu_1\gamma_1\chi_{_{B_{11}}} + \overline{\mu_1}\gamma_1\chi_{_{B_{12}}} )$. Since $\chi_{_{B_1}}$ is orthogonal to $v$, it follows from Proposition \ref{p characteristic of clopen sets 3 for two disjoint} and the hypotheses that
$$\| \Delta(v) + \mu_1\Delta(\gamma_1\chi_{_{B_{1}}}) \| = \| \Delta(v) + \Delta( \mu_1\gamma_1\chi_{_{B_{11}}} + \overline{\mu_1}\gamma_1\chi_{_{B_{12}}} ) \| $$ $$= \| \Delta(v)- \Delta(- \mu_1\gamma_1\chi_{_{B_{11}}} - \overline{\mu_1}\gamma_1\chi_{_{B_{12}}} ) \| = \| v + \mu_1\gamma_1\chi_{_{B_{11}}} + \overline{\mu_1}\gamma_1\chi_{_{B_{12}}}  \| = 1.$$ This proves that the set  $\{ \Delta(v), \Delta(\gamma_1\chi_{_{B_{1}}}) \}$ is completely $M$-orthogonal, and consequently $\Delta(v)+ \Delta(\gamma_1\chi_{_{B_{1}}})\in S(X)$. Then there exists $b\in S(C(K))$ satisfying $\Delta(b)= \Delta(v)+ \Delta(\gamma_1\chi_{_{B_{1}}})$.\smallskip

We shall next show that $b=b\chi_{A_1\cup\cdots\cup A_m\cup B_1}$.  To this end, take an arbitrary $t_0\in K \backslash (A_1\cup\cdots\cup A_m\cup B_1)$, $\alpha\in\mathbb{T}$ and $\varphi\in\hbox{supp}(t_0,\alpha)$. By Proposition \ref{p kernel of the support} we have $\varphi\Delta(b)= \varphi\Delta(v)+ \varphi\Delta(\gamma_1\chi_{_{B_{1}}})=0$. Proposition \ref{p property of the support} gives $b=b\chi_{A_1\cup\cdots\cup A_m\cup B_1}$.\smallskip


Now, let us pick $t_0\in A_{k_0}$ for some $k_0\in\{1,\ldots,m \}$, and $\varphi\in \hbox{supp}(t_0, \lambda_{k_0})$. Proposition \ref{p kernel of the support} implies that $\varphi\Delta(b)= \varphi(\Delta(v)+ \Delta(\gamma_1\chi_{_{B_{1}}}))=1,$ and hence $\Delta(b)\in \varphi^{-1} (\{1\})\cap \mathcal{B}_{X} = \Delta(A(t_0,\lambda_{k_0}))$ (cf. Lemma \ref{l existence of support functionals for the image of a face}). Thus $b\in A(t_0,\lambda_{k_0})$, and it follows that $b(t_0)=\lambda_{k_0},$ for every $t_0\in A_{k_0}$. We conclude from the arbitrariness of $k_0$ that $b= \lambda_1\chi_{_{A_{1}}}+ \cdots + \lambda_m\chi_{_{A_{m}}}+ \gamma_1\chi_{_{B_{1}}} = v+\gamma_1\chi_{_{B_{1}}},$ which concludes the proof of the case $n=1$ in our induction argument.\smallskip
	
Suppose by the induction hypothesis that the statement is true for any $1\leq k\leq n$. By the induction hypothesis for $k=1$ and $k=n$ with the algebraic partial isometry $w = v + \gamma_1\chi_{_{B_{1}}},$ we get \begin{equation}\label{eq new 2602} \Delta(v)+ \sum_{j=1}^{n+1}\Delta(\gamma_j\chi_{_{B_{j}}})= \Delta(v)+ \Delta(\gamma_1\chi_{_{B_{1}}})+ \sum_{j=2}^{n+1}\Delta(\gamma_j\chi_{_{B_{j}}})
\end{equation}$$= \Delta(v + \gamma_1\chi_{_{B_{1}}})+ \sum_{j=2}^{n+1}\Delta(\gamma_j\chi_{_{B_{j}}})= \Delta\left(w + \sum_{j=2}^{n+1}\gamma_j\chi_{_{B_{j}}}\right)= \Delta\left(v + \sum_{j=1}^{n+1}\gamma_j\chi_{_{B_{j}}}\right) .$$

Let us take $\mu_1,\ldots,\mu_{n+1}\in \mathbb{T}$. For each $j\in \{1,\ldots,n+1\}$, Proposition \ref{p characteristic of clopen sets 2} assures the existence of two disjoint clopen sets $B_{j_1}$ and $B_{j_2}$ such that $B_j= B_{j_1}\cup B_{j_2}$ and $\mu_j\Delta(\gamma_j\chi_{_{B_{j}}})=\Delta( \mu_j\gamma_j\chi_{_{B_{j_1}}} + \overline{\mu_j}\gamma_j\chi_{_{B_{j_2}}} )$. Therefore, we can conclude by the identity proved in \eqref{eq new 2602}, applied twice to $v$ and $\{\mu_j\gamma_j\chi_{_{B_{j_1}}}\}_j$ and to $w=v + \sum_{j=1}^{n+1}\mu_j\gamma_j\chi_{_{B_{j_1}}}$ and $\{ \overline{\mu_j}\gamma_j\chi_{_{B_{j_2}}}\}_j$, and Proposition \ref{p characteristic of clopen sets 3 for two disjoint} that $$\left\| \Delta(v) + \sum_{j=1}^{n+1}\mu_j\Delta(\gamma_j\chi_{_{B_{j}}}) \right\| = \left\| \Delta(v) + \sum_{j=1}^{n+1}\Delta\left( \mu_j\gamma_j\chi_{_{B_{j_1}}} + \overline{\mu_j}\gamma_j\chi_{_{B_{j_2}}} \right) \right\| $$ $$= \left\| \Delta(v) + \sum_{j=1}^{n+1}\Delta( \mu_j\gamma_j\chi_{_{B_{j_1}}}) + \sum_{j=1}^{n+1}\Delta( \overline{\mu_j}\gamma_j\chi_{_{B_{j_2}}} ) \right\| $$ $$= \left\| \Delta\left(v + \sum_{j=1}^{n+1}\mu_j\gamma_j\chi_{_{B_{j_1}}}\right) + \sum_{j=1}^{n+1}\Delta( \overline{\mu_j}\gamma_j\chi_{_{B_{j_2}}} ) \right\| $$ $$= \left\| \Delta\left(v + \sum_{j=1}^{n+1}\mu_j\gamma_j\chi_{_{B_{j_1}}} + \sum_{j=1}^{n+1} \overline{\mu_j}\gamma_j\chi_{_{B_{j_2}}} \right) \right\| = 1,$$ which finishes the induction argument and the proof.
\end{proof}

Our next result is the technical core of the paper. In the statement we keep the notation given by Corollary \ref{c characteristic of clopen sets 2 a} and \eqref{eq def product odot}.

\begin{proposition}\label{p image of addition 2}
Suppose $K$ is a Stonean space, $\gamma_1,\ldots,\gamma_n\in \mathbb{T}$, $B_1,\ldots, B_n$ are non-empty disjoint clopen subsets of $K$ such that $B_1,\ldots,B_{j_0}$ are contained in $K_1$ and $B_{j_0+1},\ldots, B_n$ are contained in $K\backslash K_1$ with $j_0\in\{0,1,\ldots,n\}$. Suppose $X$ is a complex Banach space. Let $\Delta : S(C(K))\to S(X)$ be a surjective isometry and let $\displaystyle v=\sum_{k=1}^{m}\lambda_k\chi_{_{A_{k}}}$ be an algebraic partial isometry in $C(K),$ where $\lambda_1, \ldots, \lambda_m\in \mathbb{T}$, $A_1,\ldots,A_m$ are non-empty disjoint clopen sets in $K$ such that $A_k\cap B_j=\emptyset,$ for every $k\in\{1,\ldots,m\}$ and every $ j\in\{1,\ldots,n\}$. Then, given $\alpha_1,\ldots, \alpha_n\in\mathbb{C}\backslash\{0\}$ with $\max\{|\alpha_j| : j\in\{1,\ldots,n\}\}<1$, we have $$\Delta(v) + \sum_{j=1}^{n}\alpha_j\Delta(\gamma_j\chi_{_{B_{j}}}) = \Delta\left(v + \sum_{j=1}^{j_0}\alpha_j\gamma_j\chi_{_{B_{j}}} + \sum_{j=j_0}^{n}\overline{\alpha_j}\gamma_j\chi_{_{B_{j}}}\right)$$ $$= \Delta\left(v + \sum_{j=1}^{n}\alpha_j \odot (\gamma_j\chi_{_{B_{j}}})\right).$$
\end{proposition}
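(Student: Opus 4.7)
The plan is to proceed by induction on $n$. The case $n=0$ is vacuous, so assume the statement for $n-1$ coefficients and fix $j\in\{1,\ldots,n\}$. Since $|\alpha_j|<1$, write $\alpha_j=(\zeta_j^++\zeta_j^-)/2$ with $\zeta_j^\pm\in\mathbb T$: this is the unique chord of $\mathbb T$ whose midpoint is $\alpha_j$, of length $2\sqrt{1-|\alpha_j|^2}$. Put $\xi_j^\pm:=\zeta_j^\pm$ if $j\le j_0$ and $\xi_j^\pm:=\overline{\zeta_j^\pm}$ if $j>j_0$; Corollary \ref{c characteristic of clopen sets 2 a} then gives $\zeta_j^\pm\Delta(\gamma_j\chi_{_{B_j}})=\Delta(\xi_j^\pm\gamma_j\chi_{_{B_j}})$. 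Form the algebraic partial isometries $v_j^\pm:=v+\xi_j^\pm\gamma_j\chi_{_{B_j}}$; by Proposition \ref{p image of addition 1} we have $\Delta(v_j^\pm)=\Delta(v)+\zeta_j^\pm\Delta(\gamma_j\chi_{_{B_j}})$. Now apply the inductive hypothesis with base element $v_j^\pm$ and the $n-1$ remaining coefficients $\{(\alpha_i,\gamma_i,B_i)\}_{i\neq j}$ (which are disjoint from the support of $v_j^\pm$): this yields
\[
\Delta(u_j^\pm)=\Delta(v_j^\pm)+\sum_{i\neq j}\alpha_i\Delta(\gamma_i\chi_{_{B_i}})=\Delta(v)+\zeta_j^\pm\Delta(\gamma_j\chi_{_{B_j}})+\sum_{i\neq j}\alpha_i\Delta(\gamma_i\chi_{_{B_i}}),
\]
where $u_j^\pm:=v_j^\pm+\sum_{i\neq j}\alpha_i\odot\gamma_i\chi_{_{B_i}}$. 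Averaging over the two signs (and using $(\zeta_j^++\zeta_j^-)/2=\alpha_j$) produces $\tfrac12(\Delta(u_j^+)+\Delta(u_j^-))=\Delta(v)+\sum_i\alpha_i\Delta(\gamma_i\chi_{_{B_i}})$, which is exactly the left-hand side of the identity to be proved. Complete $M$-orthogonality of $\{\Delta(v),\Delta(\gamma_i\chi_{_{B_i}})\}_i$ (Proposition \ref{p image of addition 1}) forces this element to have norm $1$, hence it equals $\Delta(b)$ for a unique $b\in S(C(K))$.

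It remains to show $b=\tilde v:=v+\sum_i\alpha_i\odot\gamma_i\chi_{_{B_i}}$. The isometry property of $\Delta$ gives
\[
\|b-u_j^\pm\|=\tfrac12\|u_j^+-u_j^-\|=\tfrac12|\xi_j^+-\xi_j^-|=\sqrt{1-|\alpha_j|^2}.
\]
For every $t_0\in B_j$ this yields $|b(t_0)-\xi_j^\pm\gamma_j|\le\sqrt{1-|\alpha_j|^2}$, so $b(t_0)$ lies in the intersection of two closed discs of radius $\sqrt{1-|\alpha_j|^2}$ whose centres $\xi_j^\pm\gamma_j$ are at distance $2\sqrt{1-|\alpha_j|^2}$. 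These two discs are externally tangent and intersect only at the single point $\tfrac12(\xi_j^++\xi_j^-)\gamma_j=\alpha_j\odot\gamma_j$, forcing $b(t_0)=\alpha_j\odot\gamma_j=\tilde v(t_0)$. Since the choice of $j$ was arbitrary, repeating the decomposition for each $j\in\{1,\ldots,n\}$ identifies $b$ with $\tilde v$ on the whole of $\bigcup_j B_j$.

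The remaining points are handled by standard support-functional bookkeeping, as in the proof of Proposition \ref{p image of addition 1}. For $t_0\in A_k$ pick $\varphi\in\hbox{supp}(t_0,\lambda_k)$: since $u_j^\pm(t_0)=\lambda_k$ we have $u_j^\pm\in A(t_0,\lambda_k)$, so $\varphi\Delta(u_j^\pm)=1$ and hence $\varphi\Delta(b)=1$; Lemma \ref{l existence of support functionals for the image of a face} gives $b\in A(t_0,\lambda_k)$, that is, $b(t_0)=\lambda_k$. For $t_0\notin\bigcup_kA_k\cup\bigcup_iB_i$, the function $u_j^\pm$ vanishes at $t_0$, so Proposition \ref{p kernel of the support} yields $\varphi\Delta(u_j^\pm)=0$ for every $\mu\in\mathbb T$ and $\varphi\in\hbox{supp}(t_0,\mu)$; consequently $\varphi\Delta(b)=0$ for all such $\varphi$, and Proposition \ref{p property of the support} concludes $b(t_0)=0$. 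I expect the main delicate point to be the correct setup of the inductive step: one must apply the inductive hypothesis with the enriched partial isometry $v_j^\pm$ (not $v$) as base, so that Proposition \ref{p image of addition 1} together with Corollary \ref{c characteristic of clopen sets 2 a} delivers the clean expression for $\Delta(v_j^\pm)$ while the pairwise disjointness of the $B_i$'s keeps the remaining $n-1$ sets a valid input for the hypothesis; once that is in place, the midpoint decomposition and the two-disc-tangency observation do the rest.
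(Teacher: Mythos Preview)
Your proof is correct and follows the same inductive skeleton as the paper's: replace one coefficient $\alpha_j$ by a unimodular value, enlarge $v$ by the corresponding term to obtain a new algebraic partial isometry, invoke Proposition~\ref{p image of addition 1} and Corollary~\ref{c characteristic of clopen sets 2 a}, apply the inductive hypothesis to the remaining $n-1$ coefficients, and then use a two-disc tangency argument in $\mathbb C$ to pin down $b(t_0)$ on each $B_j$; the determination of $b$ on $\bigcup_k A_k$ and on the complement via support functionals is identical to the paper's.

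The genuine difference lies in the choice of unimodular reference points. The paper replaces $\alpha_{j_1}$ by its radial projection $\alpha_{j_1}/|\alpha_{j_1}|$ and derives the two bounds
\[
\Bigl|b(t_0)-\tfrac{\alpha_{j_1}}{|\alpha_{j_1}|}\odot\gamma_{j_1}\Bigr|\le 1-|\alpha_{j_1}|,\qquad
\Bigl|b(t_0)+\tfrac{\alpha_{j_1}}{|\alpha_{j_1}|}\odot\gamma_{j_1}\Bigr|\le 1+|\alpha_{j_1}|,
\]
the first from the isometry against a single comparison element $z$, the second from a separate complete $M$-orthogonality computation; the two discs, of radii $1\mp|\alpha_{j_1}|$ centred at antipodal unit points, are externally tangent at $\alpha_{j_1}\odot\gamma_{j_1}$. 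You instead write $\alpha_j$ as the midpoint of a chord $\{\zeta_j^+,\zeta_j^-\}\subset\mathbb T$ and use \emph{two} comparison elements $u_j^\pm$ symmetrically, obtaining two discs of equal radius $\sqrt{1-|\alpha_j|^2}$ that again meet only at $\alpha_j\odot\gamma_j$. Your variant is more symmetric—both bounds arise from the single identity $\|b-u_j^\pm\|=\tfrac12\|u_j^+-u_j^-\|$—and sidesteps the extra $M$-orthogonality step the paper needs for its second inequality; the paper's radial choice, in turn, reuses exactly the element that already appears in Proposition~\ref{p image of addition 1}. Both routes are equally valid and of comparable length.
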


\begin{proof} Since the set $\{ \Delta(v), \Delta(\gamma_1\chi_{_{B_{1}}}), \ldots, \Delta(\gamma_n\chi_{_{B_{n}}}) \}$ is completely $M$-orthogonal (cf. Proposition \ref{p image of addition 1}), we can deduce that $\displaystyle \Delta(v)+ \sum_{j=1}^{n}\alpha_j \Delta(\gamma_j\chi_{_{B_{j}}})\in S(X)$. Thus there exists $y\in S(C(K))$ such that $\displaystyle\Delta(y)=\Delta(v)+ \sum_{j=1}^{n}\alpha_j \Delta(\gamma_j\chi_{_{B_{j}}})$.\smallskip

Let us fix $t_0\in K\backslash \left(\cup_{k,j} A_k\cup B_j\right)$, an arbitrary element $\mu$ of $\mathbb{T}$, and $\varphi\in\hbox{supp}(t_0,\mu)$. Proposition \ref{p kernel of the support} implies that $\displaystyle\varphi\Delta(y)=\varphi\Delta(v)+\sum_{j=1}^{n}\alpha_j \varphi\Delta(\gamma_j\chi_{_{B_{j}}})=0$. The arbitrariness of $\mu$ allows us to apply Proposition \ref{p property of the support} to deduce that $y(t_0)=0,$ which gives $y=y\chi_{_{\left(\cup_{k,j} A_k\cup B_j\right)}}$ thanks to the arbitrariness of $t_0$.\smallskip

Take now $t_0\in A_{k_0}$ and $\varphi\in\hbox{supp}(t_0,\lambda_{k_0})$ for some $k_0\in\{1,\ldots,m\}$. A new application of Proposition \ref{p kernel of the support} implies that $\displaystyle \varphi\Delta(y)=\varphi\Delta(v)+\sum_{j=1}^{n}\alpha_j \varphi\Delta(\gamma_j\chi_{_{B_{j}}})=1,$ and hence $\Delta(y)\in \varphi^{-1}(\{1\})\cap B_X=\Delta(A(t_0,\lambda_{k_0}))$, which assures that $y(t_0)=\lambda_{k_0}$, for every $t_0\in A_{k_0}$ and for every $k_0\in\{1,\ldots,m\}$. Therefore, $$y=v+y(1-\chi_{_{\cup_{k}A_k}})=v+\sum_{j=1}^{n}y\chi_{_{B_{j}}}.$$

We shall prove the desired identity by induction on $n$. If $n=1$, it follows from the above that there exists $y\in S(C(K))$ such that $\Delta(y)=\Delta(v)+ \alpha_1 \Delta(\gamma_1\chi_{_{B_{1}}})$ and $y=v+y\chi_{_{B_{1}}},$ with $|\alpha_1|<1$. We shall prove that $y\chi_{_{B_{1}}}=\alpha_1 \odot (\gamma_1\chi_{_{B_{1}}})$. The completely $M$-orthogonality of $\{\Delta(v), \Delta(\gamma_1\chi_{_{B_{1}}})\}$ guarantees the existence of $z\in S(C(K))$ such that $\Delta(z)=\Delta(v)+\frac{\alpha_1}{|\alpha_1|}\Delta(\gamma_1\chi_{_{B_{1}}})$ and since $B_1\subseteq K_1$ or $B_1\subseteq K_2$, the identity $z=v+\frac{\alpha_1}{|\alpha_1|} \odot (\gamma_1\chi_{_{B_{1}}})$ holds by Proposition \ref{p image of addition 1}, Corollary \ref{c characteristic of clopen sets 2 a} and \eqref{eq def product odot}. We also know that
$$
1-|\alpha_1|=\left|\frac{\alpha_1}{|\alpha_1|}-\alpha_1\right|=\| \Delta(y)-\Delta(z) \| = \|y-z\|= \left\| y\chi_{_{B_{1}}}-\frac{\alpha_1}{|\alpha_1|}\odot (\gamma_1\chi_{_{B_{1}}}) \right\|,
$$
and
\begin{equation}\label{15022018 1}1<
1+|\alpha_1|=\left|\frac{\alpha_1}{|\alpha_1|}+\alpha_1\right|=\left\| \Delta(y)
+\frac{\alpha_1}{|\alpha_1|}\Delta(\gamma_1\chi_{_{B_{1}}}) \right\|=\left\| y + \frac{\alpha_1}{|\alpha_1|}\odot (\gamma_1 \chi_{_{B_{1}}})\right\|
\end{equation} $$= \left\| y\chi_{_{B_{1}}}
+\frac{\alpha_1}{|\alpha_1|}\odot (\gamma_1 \chi_{_{B_{1}}})\right\|\vee \|v\|= \left\| y\chi_{_{B_{1}}}
+\frac{\alpha_1}{|\alpha_1|}\odot (\gamma_1 \chi_{_{B_{1}}})\right\|.$$

It follows from the previous two identities that $$\left| y(t)\chi_{_{B_{1}}}(t)-\frac{\alpha_1\gamma_1}{|\alpha_1\gamma_1|}\chi_{_{B_{1}}}(t) \right|\leq 1-|\alpha_1\gamma_1|,$$ and $$\left| y(t)\chi_{_{B_{1}}}(t)
+\frac{\alpha_1\gamma_1}{|\alpha_1\gamma_1|}\chi_{_{B_{1}}}(t) \right|\leq 1+|\alpha_1\gamma_1|,$$ for every element $t$ in $K_1$, and $$\left| y(t)\chi_{_{B_{1}}}(t)-\frac{\overline{\alpha_1}\gamma_1}{|\overline{\alpha_1}\gamma_1|}\chi_{_{B_{1}}}(t) \right|\leq 1-|\overline{\alpha_1}\gamma_1|,$$ and $$\left| y(t)\chi_{_{B_{1}}}(t)
+\frac{\overline{\alpha_1}\gamma_1}{|\overline{\alpha_1}\gamma_1|}\chi_{_{B_{1}}}(t) \right|\leq 1+|\overline{\alpha_1}\gamma_1|,$$ for every element $t$ in $K_2$. When particularized to an element $t\in B_1\subseteq K_1$ and $t\in B_1\subseteq K_1$ the previous inequalities result in $$\left| y(t)-\frac{\alpha_1\gamma_1}{|\alpha_1\gamma_1|} \right|\leq 1-|\alpha_1\gamma_1|,\hbox{ and } \left| y(t)
+\frac{\alpha_1\gamma_1}{|\alpha_1\gamma_1|}  \right|\leq 1+|\alpha_1\gamma_1|,$$ and $$\left| y(t) -\frac{\overline{\alpha_1}\gamma_1}{|\overline{\alpha_1}\gamma_1|}  \right|\leq 1-|\overline{\alpha_1}\gamma_1|,\hbox{ and } \left| y(t)
+\frac{\overline{\alpha_1}\gamma_1}{|\overline{\alpha_1}\gamma_1|} \right|\leq 1+|\overline{\alpha_1}\gamma_1|,$$ respectively, which give $y(t)= {\alpha_1\gamma_1}$ and $y(t)= {\overline{\alpha_1}\gamma_1}$,  respectively. Therefore $y\chi_{_{B_{1}}}=\alpha_1\odot (\gamma_1\chi_{_{B_{1}}}),$ which concludes the induction argument in the case $n=1$.\smallskip

Suppose now, by the induction hypothesis, that given $\alpha_1,\ldots, \alpha_n\in\mathbb{C}\backslash\{0\}$ with $\max\{|\alpha_j| : j\in\{1,\ldots,n\}\}<1$, we have $$\Delta(v) + \sum_{j=1}^{n}\alpha_j\Delta(\gamma_j\chi_{_{B_{j}}}) = \Delta\left(v + \sum_{j=1}^{j_0}\alpha_j\gamma_j\chi_{_{B_{j}}} + \sum_{j=j_0}^{n}\overline{\alpha_j}\gamma_j\chi_{_{B_{j}}}\right)$$ $$= \Delta\left(v + \sum_{j=1}^{n}\alpha_j \odot (\gamma_j\chi_{_{B_{j}}})\right),$$ whenever $v$ is an algebraic partial isometry and $B_1,\ldots, B_{n}$ are as in the statement of the proposition.\smallskip

Let $\displaystyle v=\sum_{k=1}^{m}\lambda_k\chi_{_{A_{k}}}$ and $B_1,\ldots, B_{n+1}$ be as in the statement of the proposition. By the arguments exhibited at the beginning of the proof, we may assume the existence of $y\in S(C(K))$ such that $\displaystyle \Delta(y)=\Delta(v)+ \sum_{j=1}^{n+1}\alpha_j \Delta(\gamma_j\chi_{_{B_{j}}})$ and $\displaystyle y=v+\sum_{j=1}^{n+1}y\chi_{_{B_{j}}}$. To prove that $\displaystyle  y=v+ \sum_{j=1}^{n+1}\alpha_j\odot (\gamma_j\chi_{_{B_{j}}})$, it will suffice to show that  $y\chi_{_{B_{j}}}=\alpha_j \odot (\gamma_j\chi_{_{B_{j}}})$ for every $j=1,\ldots,n+1$.\smallskip

Let us fix $j_1\in \{1,\ldots,n+1\}$. Proposition \ref{p image of addition 1} assures the existence of $z\in S(C(K))$ such that $$\Delta(z)=\Delta(v)+ \frac{\alpha_{j_1}}{|\alpha_{j_1}|} \Delta(\gamma_1\chi_{_{B_{1}}})+\sum_{j\neq j_1} \alpha_j \Delta(\gamma_j\chi_{_{B_{j}}}).$$ Thus, by induction hypothesis, Corollary \ref{c characteristic of clopen sets 2 a} and Proposition \ref{p image of addition 1}, we conclude that $\displaystyle z=v+ \frac{\alpha_{j_1}}{|\alpha_1|}\odot ( \gamma_1\chi_{_{B_{1}}})+\sum_{j\neq j_1} \alpha_j \odot (\gamma_j\chi_{_{B_{j}}})$. Applying this identity we get
\begin{equation}\label{15022018 2} 1-|\alpha_{j_1}|=\left|\alpha_{j_1}-\frac{\alpha_{j_1}}{|\alpha_{j_1}|}\right|=\|\Delta(y)-\Delta(z)\|
=\|y-z\| \end{equation}
$$=\left\|y\chi_{_{B_{j_1}}}-\frac{\alpha_{j_1}}{|\alpha_{j_1}|}\odot (\gamma_{j_1} \chi_{_{B_{j_1}}})\right\| \vee \max\left\{\left\| y\chi_{_{B_{j}}}-\alpha_{j}\odot (\gamma_{j}\chi_{_{B_{j}}}) \right\|: j\neq j_1 \right\}.$$ Consequently  \begin{equation}\label{15022018 3}\left\|y\chi_{_{B_{j_1}}}-\frac{\alpha_{j_1}}{|\alpha_{j_1}|}\odot (\gamma_{j_1} \chi_{_{B_{j_1}}})\right\| \leq 1-|\alpha_{j_1}|.
\end{equation}

Arguing as in \eqref{15022018 1} we also get \begin{equation}\label{15022018 1 bis}
1+|\alpha_{j_1}|=\left|\frac{\alpha_{j_1}}{|\alpha_{j_1}|}+\alpha_{j_1}\right|=\left\| \Delta(y)
+\frac{\alpha_{j_1}}{|\alpha_{j_1}|}\Delta(\gamma_{j_1}\chi_{_{B_{j_1}}}) \right\|=\left\| y + \frac{\alpha_{j_1}}{|\alpha_{j_1}|}\odot (\gamma_{j_1} \chi_{_{B_{j_1}}})\right\|
\end{equation} $$= \left\| y\chi_{_{B_{j_1}}}
+\frac{\alpha_{j_1}}{|\alpha_{j_1}|}\odot (\gamma_{j_1} \chi_{_{B_{j_1}}})\right\|\vee \|v\|\vee \max\{\| y\chi_{_{B_{j}}} \|: j\neq j_1 \}$$ $$= \left\| y\chi_{_{B_{j_1}}}
+\frac{\alpha_{j_1}}{|\alpha_{j_1}|}\odot (\gamma_{j_1} \chi_{_{B_{j_1}}})\right\|.$$

Evaluating at an element $t_0\in B_{j_1}$ we deduce from \eqref{15022018 3} and \eqref{15022018 1 bis} that $$\left| y(t_0)-\frac{\alpha_{j_1}\gamma_{j_1}}{|\alpha_{j_1}\gamma_{j_1}|} \right|\leq 1-|\alpha_{j_1}\gamma_{j_1}|,\hbox{ and } \left| y(t_0)
+\frac{\alpha_{j_1}\gamma_{j_1}}{|\alpha_{j_1}\gamma_{j_1}|}  \right|\leq 1+|\alpha_{j_1}\gamma_{j_1}|,$$ if $t_0\in K_1$, and $$\left| y(t_0) -\frac{\overline{\alpha_{j_1}}\gamma_{j_1}}{|\overline{\alpha_{j_1}}\gamma_{j_1}|}  \right|\leq 1-|\overline{\alpha_{j_1}}\gamma_{j_1}|,\hbox{ and } \left| y(t_0)
+\frac{\overline{\alpha_{j_1}}\gamma_{j_1}}{|\overline{\alpha_{j_1}}\gamma_{j_1}|} \right|\leq 1+|\overline{\alpha_{j_1}}\gamma_{j_1}|,$$ if $t_0\in K_2$, inequalities which give $y(t_0)= {\alpha_{j_1}\gamma_{j_1}}$ if $t_0\in K_1$ and $y(t_0)= {\overline{\alpha_{j_1}}\gamma_{j_1}}$  if $t_0\in K_2$,  respectively. We have shown that  $y\chi_{_{B_{j}}}=\alpha_j \odot (\gamma_j\chi_{_{B_{j}}})$, which finishes the proof.
\end{proof}

The next corollary is a straightforward consequence of the previous proposition.

\begin{corollary}\label{c gross identity}  Let $\Delta : S(C(K))\to S(X)$ be a surjective isometry, where $K$ is a Stonean space and $X$ is a complex Banach space. Let $v_1,\ldots,v_n$ be mutually orthogonal algebraic partial isometries in $C(K)$. Then, given $\alpha_1,\ldots, \alpha_n\in\mathbb{C}\backslash\{0\}$ with $\max\{|\alpha_j| : j\in\{1,\ldots,n\}\}=1$, we have $$\sum_{j=1}^{n}\alpha_j \Delta(v_j) = \Delta\left(\sum_{j=1}^{n}\alpha_j \odot v_j\right).$$ $\hfill\Box$
\end{corollary}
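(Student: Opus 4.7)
The strategy is to reduce to Proposition~\ref{p image of addition 2}, whose ``base'' algebraic partial isometry $v$ is effectively forced to carry coefficient~$1$, while the remaining coefficients must have modulus strictly less than~$1$. After reordering, I assume $|\alpha_j|=1$ for $j=1,\ldots,r$ and $|\alpha_j|<1$ for $j=r+1,\ldots,n$; by hypothesis $r\geq 1$. I set
\[
v \; := \; \sum_{j=1}^r \alpha_j \odot v_j,
\]
which, since each $\alpha_j\odot v_j$ is itself an algebraic partial isometry and the $v_j$'s are mutually orthogonal, is again an algebraic partial isometry orthogonal to each $v_j$ with $j>r$.

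The key intermediate step is the identity $\Delta(\alpha \odot v_j)=\alpha\, \Delta(v_j)$ whenever $|\alpha|=1$ and $v_j$ is an algebraic partial isometry. Writing $v_j=\sum_k \gamma_{j,k}\chi_{_{A_{j,k}}}$ and splitting each $A_{j,k}$ according to the decomposition $K=K_1\cup K_2$, the definition \eqref{eq def product odot} expresses $\alpha\odot v_j$ as a finite sum of mutually orthogonal terms of the form $\alpha\gamma_{j,k}\chi_{_{A_{j,k}\cap K_1}}$ and $\overline{\alpha}\gamma_{j,k}\chi_{_{A_{j,k}\cap K_2}}$. Applied to each such piece, Corollary~\ref{c characteristic of clopen sets 2 a} yields $\Delta(\alpha\gamma_{j,k}\chi_{_{A_{j,k}\cap K_1}})=\alpha\,\Delta(\gamma_{j,k}\chi_{_{A_{j,k}\cap K_1}})$ and $\Delta(\overline{\alpha}\gamma_{j,k}\chi_{_{A_{j,k}\cap K_2}})=\alpha\,\Delta(\gamma_{j,k}\chi_{_{A_{j,k}\cap K_2}})$; a double application of Proposition~\ref{p image of addition 1} (first expanding the sum, then recombining) then gives $\Delta(\alpha\odot v_j)=\alpha\,\Delta(v_j)$. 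In particular, another application of Proposition~\ref{p image of addition 1} to the mutually orthogonal family $\{\alpha_j\odot v_j\}_{j\leq r}$ produces
\[
\Delta(v) \; = \; \sum_{j=1}^r \Delta(\alpha_j\odot v_j) \; = \; \sum_{j=1}^r \alpha_j\,\Delta(v_j).
\]

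With this base element prepared, I expand each $v_j$ with $j>r$ as $\sum_k \gamma_{j,k}\chi_{_{A_{j,k}}}$ and apply Proposition~\ref{p image of addition 2} to $v$ together with the family $\{\alpha_j\gamma_{j,k}\chi_{_{A_{j,k}}}\}_{j>r,\,k}$, whose scalars $\alpha_j$ all satisfy $|\alpha_j|<1$. The resulting identity
\[
\Delta(v)+\sum_{j>r,\,k}\alpha_j\Delta(\gamma_{j,k}\chi_{_{A_{j,k}}}) \; = \; \Delta\!\left(v+\sum_{j>r,\,k}\alpha_j\odot(\gamma_{j,k}\chi_{_{A_{j,k}}})\right)
\]
becomes, after using Proposition~\ref{p image of addition 1} on the left to collapse $\sum_k \Delta(\gamma_{j,k}\chi_{_{A_{j,k}}})=\Delta(v_j)$ and the algebraic distributivity $\sum_k \alpha_j\odot(\gamma_{j,k}\chi_{_{A_{j,k}}})=\alpha_j\odot v_j$ on the right, precisely the desired conclusion $\sum_{j=1}^n \alpha_j\Delta(v_j)=\Delta\bigl(\sum_{j=1}^n \alpha_j\odot v_j\bigr)$.

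The only real obstacle is the strict inequality $\max|\alpha_j|<1$ built into Proposition~\ref{p image of addition 2}: in the corollary several of the $|\alpha_j|$'s may simultaneously attain the maximum~$1$. The remedy is exactly the grouping above, which folds all unimodular coefficients into a single algebraic partial isometry via the twisted action $\odot$; once that is done, Proposition~\ref{p image of addition 2} applies verbatim to the remaining ``strictly sub-unital'' terms.
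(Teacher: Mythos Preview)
Your argument is correct and is precisely the natural unfolding of the paper's one-line ``straightforward consequence'': group all unimodular coefficients into a single algebraic partial isometry $v=\sum_{j\le r}\alpha_j\odot v_j$ via Proposition~\ref{p image of addition 1} and Corollary~\ref{c characteristic of clopen sets 2 a}, and then feed the remaining strictly sub-unital terms to Proposition~\ref{p image of addition 2}. One small point to tidy: in your final step, Proposition~\ref{p image of addition 2} requires each clopen $B_j$ to lie inside $K_1$ or inside $K_2$, so when you expand $v_j=\sum_k\gamma_{j,k}\chi_{_{A_{j,k}}}$ for $j>r$ you should split each $A_{j,k}$ as $(A_{j,k}\cap K_1)\cup(A_{j,k}\cap K_2)$ exactly as you did in the intermediate step---after that, everything goes through verbatim.
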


Proposition \ref{p image of addition 2} and its revision in Corollary \ref{c gross identity} are the tools we need to get a first approach to our main result. In this first approach we follow the ideas in the proof of \cite[Theorem 1.1]{Pe2017} or in the line in \cite{Ta:8}.

\begin{theorem}\label{t Tingley CK} Let $\Delta : S(C(K))\to S(X)$ be a surjective isometry, where $K$ is a Stonean space and $X$ is a complex Banach space. Then there exist two disjoint clopen subsets $K_1$ and $K_2$ of $K$ such that $K = K_1 \cup K_2$
satisfying that if $K_1$ {\rm(}respectively, $K_2${\rm)} is non-empty, then there exist a closed subspace $X_1$ {\rm(}respectively, $X_2${\rm)} of $X$ and a complex linear {\rm(}respectively, conjugate linear{\rm)} surjective isometry $T_1 : C(K_1) \to X_1$ {\rm(}respectively, $T_2 : C(K_2) \to X_2${\rm)} such that $X = X_1\oplus^{\infty} X_2$, and $\Delta (a) = T_1(\pi_1(a)) +  T_2(\pi_2(a))$, for every $a\in S(C(K))$, where $\pi_j$ is the natural projection of $C(K)$ onto $C(K_j)$ given by $\pi_j (a) = a|_{K_j}$. In particular, $\Delta$ admits an extension to a surjective real linear isometry from $C(K)$ onto $X$.
\end{theorem}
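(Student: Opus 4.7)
The strategy is to build the isometries $T_1$ and $T_2$ first on the dense subalgebra of algebraic elements (finite linear combinations of characteristic functions of clopen sets) of $C(K_1)$ and $C(K_2)$, exploiting the fact that $K$ is Stonean (so every $a\in C(K)$ is a uniform limit of such step functions by \cite[Proposition 1.3.1]{S}), and then to extend by continuity.

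More concretely: for each clopen $A\subseteq K_j$ set $T_j(\chi_A):=\Delta(\chi_A)$, and extend by $\mathbb{C}$-linearity if $j=1$ and by conjugate linearity if $j=2$. For an algebraic $a\in S(C(K))$ written as $a=\sum_k\alpha_k\chi_{A_k^{(1)}}+\sum_\ell\beta_\ell\chi_{B_\ell^{(2)}}$ with $A_k^{(1)}\subseteq K_1$, $B_\ell^{(2)}\subseteq K_2$, all mutually disjoint clopen, the definition of $\odot$ in \eqref{eq def product odot} combined with Corollary \ref{c characteristic of clopen sets 2 a} gives $\alpha_k\odot\chi_{A_k^{(1)}}=\alpha_k\chi_{A_k^{(1)}}$ and $\overline{\beta_\ell}\odot\chi_{B_\ell^{(2)}}=\beta_\ell\chi_{B_\ell^{(2)}}$. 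Applying Corollary \ref{c gross identity} to the mutually orthogonal family $\{\chi_{A_k^{(1)}},\chi_{B_\ell^{(2)}}\}$ with coefficients $\alpha_k$ and $\overline{\beta_\ell}$ (rescaled so the maximum modulus is one, which is automatic as $\|a\|=1$) yields
\[
\Delta(a)=\sum_k\alpha_k\Delta(\chi_{A_k^{(1)}})+\sum_\ell\overline{\beta_\ell}\Delta(\chi_{B_\ell^{(2)}})=T_1(\pi_1(a))+T_2(\pi_2(a)).
\]

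The complete $M$-orthogonality of the family $\{\Delta(\chi_{A_k^{(1)}}),\Delta(\chi_{B_\ell^{(2)}})\}_{k,\ell}$, which follows from iterating Proposition \ref{p characteristic of clopen sets 3 for two disjoint}(a) in the spirit of Proposition \ref{p image of addition 1}, gives simultaneously that $T_j$ is well-defined and isometric on algebraic elements and that $\|T_1(c_1)+T_2(c_2)\|=\max\{\|c_1\|,\|c_2\|\}$ for algebraic $c_j\in C(K_j)$. Density of step functions in $C(K_j)$ (Stonean hypothesis) lets us extend $T_j$ by continuity to a ($\mathbb{C}$- or conjugate-)linear isometry $T_j:C(K_j)\to X$; put $X_j:=T_j(C(K_j))$, a closed subspace of $X$.

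The identity $\Delta(a)=T_1(\pi_1(a))+T_2(\pi_2(a))$, already proved for algebraic $a\in S(C(K))$, propagates to every $a\in S(C(K))$ by approximating $a$ uniformly by algebraic elements of norm one and invoking the continuity of $\Delta$ and of the $T_j$. Consequently the real linear map $T:C(K)\to X$, $T(a):=T_1(\pi_1(a))+T_2(\pi_2(a))$, is a real linear isometry extending $\Delta|_{S(C(K))}$. Surjectivity of $\Delta$ forces $S(X)\subseteq X_1+X_2$, hence by homogeneity $X=X_1+X_2$, and the $M$-orthogonality upgrades this to $X=X_1\oplus^\infty X_2$.

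The main obstacle is the well-definedness of $T_j$ on the algebra of step functions when the same element admits several representations in terms of different families of orthogonal characteristic functions; this is exactly what Corollary \ref{c gross identity} is designed to handle, provided one checks that splitting a clopen set $A$ into finitely many disjoint clopen pieces is compatible with $\Delta$ (which follows by repeated use of Proposition \ref{p characteristic of clopen sets 3 for two disjoint}). Once well-definedness and isometry on the algebraic part are secured, the remainder is a clean density-plus-continuity argument, and the $\ell^\infty$-direct sum decomposition of $X$ is inherited from the $M$-orthogonality on the algebraic side.
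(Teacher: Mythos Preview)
Your proposal is correct and follows essentially the same route as the paper: both arguments take the clopen decomposition $K=K_1\cup K_2$ from Corollary~\ref{c characteristic of clopen sets 2 a}, use Corollary~\ref{c gross identity} as the key identity to pin down $\Delta$ on algebraic (step) elements, and then pass to general $a\in S(C(K))$ by density and continuity. The only cosmetic difference is that the paper defines the extensions as the homogeneous maps $F_j(a)=\|a\|\,\Delta(a/\|a\|)$ on $C(K_j)$ and then proves additivity, whereas you declare $T_j$ linearly (resp.\ conjugate-linearly) on characteristic functions and then prove it is isometric and agrees with $\Delta$; these are two sides of the same computation and the well-definedness issue you flag is resolved, as you say, by Corollary~\ref{c gross identity}.
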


\begin{proof} Let $K_1$ and $K_2$ be the clopen subsets given by Corollary \ref{c characteristic of clopen sets 2 a}. We can assume that $K_j\neq \emptyset,$ for every $j=1,2$. Otherwise, the arguments are even easier. Clearly, $C(K) = C(K_1)\oplus^{\infty} C(K_2).$\smallskip

We consider the homogeneous extensions $F_j:C(K_j)\to X$, defined by $F_j(0)=0$ and $F_j(a) = \|a\| \Delta( \frac{1}{\|a\|} a)$ for all $a\in C(K_j)\backslash\{0\}.$\smallskip

Let us fix two algebraic elements in $C(K_1)$ (respectively,  $C(K_2)$) of the form
$$\widehat{a} = \sum_{j=1}^{n}\alpha_j \odot v_j,\hbox{ and } \widehat{b} =  \sum_{j=1}^{n} \beta_j \odot v_j,$$ where $v_1,\ldots, v_n$ are mutually orthogonal non-zero algebraic partial isometries in $K_1$ (respectively, $K_2$), $\alpha_1,\ldots, \alpha_n,$ $\beta_1,\ldots, \beta_n\in\mathbb{C}\backslash\{0\}$ with $\max\{|\alpha_j| : j\in\{1,\ldots,n\}\}=\|\widehat{a} \|$, and $\max\{|\beta_j| : j\in\{1,\ldots,n\}\}=\|\widehat{b}\|$.

If $\widehat{a}+\widehat{b} = 0,$ with $\widehat{a}\neq 0$,  then Corollary \ref{c gross identity} assures that $$F_j(\widehat{a}) ={\|\widehat{a}\|} \Delta\left(\frac{\widehat{a}}{\|\widehat{a}\|}\right) = {\|\widehat{a}\|} \left(- \Delta\left(-\frac{\widehat{a}}{\|\widehat{a}\|}\right)\right) = - {\|\widehat{b}\|} \Delta\left(\frac{\widehat{b}}{\|\widehat{b}\|}\right) = - F(\widehat{b}),$$ and hence $F_j(\widehat{a}+\widehat{b}) = 0 = F_j(\widehat{a}) + F_j(\widehat{b})$, for every $j=1,2$.\smallskip

If $\widehat{a}+\widehat{b}\neq 0$, a new application of Corollary \ref{c gross identity} implies that $$F_j(\widehat{a}) = \|\widehat{a}\| \Delta \left(\frac{1}{\|\widehat{a}\|} \widehat{a}\right)= \|\widehat{a}\| \Delta   \left( \sum_{j=1}^{n}\frac{\alpha_j}{\|\widehat{a}\|}  \odot v_j \right) = \|\widehat{a}\| \left( \sum_{j=1}^{n} \frac{\alpha_j}{\|\widehat{a}\|} \  \Delta ( v_j) \right),$$
$$F_j(\widehat{b}) = \|\widehat{b}\| \Delta \left(\frac{1}{\|\widehat{b}\|} \widehat{b}\right)= \|\widehat{b}\| \Delta   \left( \sum_{j=1}^{n}\frac{\beta_j}{\|\widehat{b}\|}  \odot v_j \right) = \|\widehat{b}\| \left( \sum_{j=1}^{n} \frac{\beta_j}{\|\widehat{b}\|} \  \Delta ( v_j) \right),$$
$$F_j(\widehat{a}+\widehat{b}) = \|\widehat{a}+\widehat{b}\| \Delta \left(\frac{1}{\|\widehat{a}+\widehat{b}\|} (\widehat{a}+\widehat{b})\right) = \|\widehat{a}+\widehat{b}\| \Delta   \left( \sum_{j=1}^n \frac{\alpha_j+\beta_j}{\|\widehat{a}+\widehat{b}\|} \odot v_j \right)$$ $$=  \sum_{j=1}^k (\alpha_j+\beta_j) \Delta (v_j).$$ Therefore, $F_j(\widehat{a}) + F_j(\widehat{b}) = F_j(\widehat{a}) + F_j(\widehat{b})$, for every $j=1,2$.\smallskip

It is known that $F_j$ is a Lipschitz mapping for every $j=1,2$ (compare for example, the final part in the proof of \cite[Theorem 1.1]{Pe2017}).\smallskip

Now we observe that for every $a,b\in C(K_j)$ and $\varepsilon >0$ we can find a set $\{v_1,\ldots,v_k\}$ of mutually orthogonal non-zero algebraic partial isometries in $C(K_j)$ and $\alpha_1,\beta_1,\ldots,\alpha_n,\beta_n \in \mathbb{C}\backslash\{0\}$ such that $\displaystyle \left\|a - \widehat{a}_k \right\|<\varepsilon$ and $ \left\|b - \widehat{b}_k \right\|<\varepsilon,$ where $\displaystyle \widehat{a}_k = \sum_{j=1}^k \alpha_j\odot v_j$, and $\displaystyle \widehat{b}_k=\sum_{j=1}^k \beta_j\odot v_j$. Since, by the arguments in the first part of this proof, we know that $F_j(\widehat{a}_k + \widehat{b}_k ) = F_j(\widehat{a}_k) + F_j(\widehat{b}_k),$ and $F_j$ is a Lipschitz mapping, we deduce, from the arbitrariness of $\varepsilon>0$, that $F_j(a+ b) = F_j(a) + F_j(b)$, for all $a,b\in C(K_j)$.\smallskip

For $\alpha\in \mathbb{C}$ and a non-zero algebraic partial isometry $v\in C(K_j)$ we have $$ F_1 (\alpha v) = |\alpha| \Delta \left(\frac{\alpha}{|\alpha|} v\right) = \alpha \Delta (v) = \alpha F_1 (v),$$ if $v\in C(K_1),$ and $$ F_2 (\alpha v) = |\alpha| \Delta \left(\frac{\alpha}{|\alpha|} v\right) = \overline{\alpha} \Delta (v) =\overline{\alpha} F_2 (v),$$ if $v\in C(K_2)$ (compare Corollary \ref{c characteristic of clopen sets 2 a}). We can therefore conclude from the arguments in the previous paragraph that $F_1$ is complex linear and $F_2$ is conjugate linear. It is obvious from definitions that $F_1 (a_1) = \Delta(a_1)$ and $F_2 (a_2) = \Delta(a_2)$ for every $a_j\in S(C(K_j))$, $j=1,2$. In particular, $F_1$ and $F_2$ are isometries, and $X_j = F_j (C(K_j))$ is a closed subspace of $X$ for every $j=1,2$.\smallskip

Furthermore, every $a\in S(C(K))$ can be approximated in norm by an algebraic element of the form $$\widehat{a} = \sum_{j=1}^{n}\alpha_j \odot v_j +  \sum_{k=1}^{m} \beta_k \odot w_k= \sum_{j=1}^{n}\alpha_j  v_j +  \sum_{k=1}^{m} \beta_k \odot w_k,$$ where $v_1,\ldots, v_n$ and $w_1,\ldots, w_m$ are mutually orthogonal non-zero algebraic partial isometries in $C(K_1)$ and $C(K_2)$, respectively, $\alpha_1,\ldots, \alpha_n,$ $\beta_1,\ldots, \beta_m\in\mathbb{C}\backslash\{0\}$ with $\max\{|\alpha_j| : j\in\{1,\ldots,n\}\} \vee \max\{|\beta_k| : k\in\{1,\ldots,m\}\}=1$. It follows from previous arguments (essentially from Corollary \ref{c gross identity}) that $$\Delta(\widehat{a}) = \sum_{j=1}^{n}\alpha_j  \Delta(v_j) +  \sum_{k=1}^{m} \beta_k \Delta(w_k) = F_1 (\pi_1(\widehat{a})) + F_2 (\pi_2(\widehat{a})),$$ and by continuity $$\Delta(a) = F_1 (\pi_1(a)) + F_2 (\pi_2(a)),$$ for every $a\in S(C(K))$. Suppose $x\in X_1\cap X_2$ with $\|x\|=1$. By construction, there exist $a_1\in S(C(K_1))$ and $a_2\in S(C(K_2))$ satisfying $\Delta(a_1) = x = \Delta(a_2),$ and hence $a_1=a_2$, which is impossible because $C(K_1)\cap C(K_2)$. Therefore, $X_1\cap X_2=\{0\}$.\smallskip

We shall finally show that $X= X_1\oplus X_2$. Given $x\in X$, there exists $a= a_1+a_2$ in $C(K),$ with $a_j \in C(K_j),$ satisfying $$ x= \Delta(a) =F_1 (\pi_1({a})) + F_2 (\pi_2({a}))=  F_1 (a_1) + F_2 (a_2)\in X_1\oplus X_2.$$ The rest is clear.
\end{proof}

After presenting our first approach to obtain the final conclusion in the previous Theorem \ref{t Tingley CK}, we insert next a second approach which is closer to the arguments in \cite{Ding07}, \cite[Corollaries 5 to 7]{Liu2007}, and \cite{FangWang06}. This second approach conducts to a less conclusive result, we include it here for completeness and as a tribute to the pioneering works of G.G. Ding, R. Liu and X.N. Fang, J.H. Wang.\smallskip

We recall next a lemma taken from \cite{FangWang06}.

\begin{lemma}\label{l 2.1 FangWang}\cite[Lemma 2.1]{FangWang06} Let $X$ and $Y$ be real normed spaces. Suppose $\Delta : S(X) \to S(Y)$ is an onto isometry. If for any $x, y \in S(X)$, we have $$\| \Delta(y) -\lambda \Delta(x) \|\leq \| y-\lambda x \|,$$ for all $\lambda > 0$, then $\Delta$ can be extended to a surjective real linear isometry from $X$ onto $Y$. $\hfill\Box$
\end{lemma}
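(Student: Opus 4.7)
The plan is to build the positively homogeneous extension $T\colon X\to Y$ of $\Delta$, to verify that it is a surjective isometry fixing the origin, and then to invoke the Mazur--Ulam theorem to conclude that $T$ is real linear. Concretely, I would set $T(0)=0$ and $T(x)=\|x\|\,\Delta(x/\|x\|)$ for $x\in X\setminus\{0\}$. With this definition $T$ automatically restricts to $\Delta$ on $S(X)$, preserves norms ($\|T(x)\|=\|x\|$), and is positively homogeneous ($T(tx)=tT(x)$ for every $t\ge 0$).

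Two routine checks follow. First, $T$ is $1$-Lipschitz on all of $X$: for nonzero $x,y$ with $\|y\|\ge\|x\|$, setting $u=x/\|x\|$, $v=y/\|y\|$, and $\lambda=\|x\|/\|y\|\in(0,1]$, the hypothesis applied to $(v,u,\lambda)$ yields $\|\Delta(v)-\lambda\Delta(u)\|\le\|v-\lambda u\|$, and multiplying through by $\|y\|$ gives $\|T(y)-T(x)\|\le\|y-x\|$; the boundary cases involving the origin reduce to norm preservation. Second, $T$ is bijective: surjectivity writes any $w\in Y\setminus\{0\}$ as $T(\|w\|\,\Delta^{-1}(w/\|w\|))$, and injectivity follows because $T(x)=T(y)$ forces $\|x\|=\|y\|$ by norm preservation and then $\Delta(x/\|x\|)=\Delta(y/\|y\|)$, whence $x=y$.

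The main obstacle will be upgrading $T$ from $1$-Lipschitz to an honest isometry, since the hypothesis supplies only a one-sided inequality. My plan is to fix $x,y\in S(X)$ and study the two convex functions $f(\lambda):=\|\Delta(y)-\lambda\Delta(x)\|$ and $g(\lambda):=\|y-\lambda x\|$ on $[0,\infty)$: one already has $f\le g$ by hypothesis, $f(0)=g(0)=1$ by norm preservation, and $f(1)=g(1)$ by the sphere isometry property of $\Delta$. The strategy is to harvest the parallel hypotheses obtained by replacing $y$ with $-y$ and $x$ with $-x$, and then to pick, via Hahn--Banach, a norming functional for the extremal configuration $\|\Delta(y)-\Delta(x)\|=\|y-x\|$ in order to force the pointwise equality $f(\lambda)=g(\lambda)$ for all $\lambda>0$; rescaling this identity by positive factors then recovers $\|T(y)-T(x)\|=\|y-x\|$ for arbitrary $x,y\in X$. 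Once $T$ is a surjective isometry with $T(0)=0$, the Mazur--Ulam theorem gives that $T$ is affine, and vanishing at the origin forces it to be real linear, completing the construction of the required extension.
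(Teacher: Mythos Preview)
The paper does not actually prove this lemma: it is quoted from \cite{FangWang06} and closed with a $\Box$. Your overall architecture is correct and matches the standard one: define the positively homogeneous extension $T$, verify that it is norm-preserving, bijective, and $1$-Lipschitz, and then invoke Mazur--Ulam once $T$ is known to be a global isometry. The two ``routine checks'' are fine.

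The gap is in the ``main obstacle'' step. Substituting $-x$ or $-y$ gives you inequalities involving $\Delta(-x)$ and $\Delta(-y)$, but at this stage you do not know that $\Delta(-x)=-\Delta(x)$, so those substitutions do not feed back into $f(\lambda)=\|\Delta(y)-\lambda\Delta(x)\|$. Moreover, two convex functions satisfying $f\le g$, $f(0)=g(0)$, $f(1)=g(1)$ need not coincide on $(0,1)$ (e.g.\ $g(\lambda)=\lambda^{2}$ and $f(\lambda)=(1+\varepsilon)\lambda^{2}-\varepsilon\lambda$), so a bare convexity-plus-norming-functional argument along the lines you sketch cannot force $f\equiv g$. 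The missing idea is geometric and uses a \emph{third} sphere point: given $a,b\in X$ with $0<\|a\|<\|b\|=1$, extend the segment from $b$ through $a$ until it meets $S(X)$ again at some $z$ (the map $t\mapsto\|b+t(a-b)\|$ is $1$ at $t=0$, strictly less than $1$ at $t=1$, and tends to $\infty$, so such $z$ exists by the intermediate value theorem). Then $\|z-b\|=\|z-a\|+\|a-b\|$, while $\|T(z)-T(b)\|=\|z-b\|$ since $z,b\in S(X)$ and $\Delta$ is an isometry there; combining this with the $1$-Lipschitz bounds $\|T(z)-T(a)\|\le\|z-a\|$ and $\|T(a)-T(b)\|\le\|a-b\|$ and the triangle inequality forces both to be equalities. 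This gives $\|T(a)-T(b)\|=\|a-b\|$ in general, and Mazur--Ulam finishes the job exactly as you planned.
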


Let $\Delta : S(C(K))\to S(X)$ be a surjective isometry, where $K$ is a Stonean space and $X$ is a complex Banach space. Let $K_1$ and $K_2$ be the clopen subsets given by Corolary \ref{c characteristic of clopen sets 2 a}. We define a new mapping $\sigma: K\times C(K)\to \mathbb{C},$ given by $\sigma(t,a) = a(t),$ if $t\in K_1,$ and $\sigma(t,a) = \overline{a(t)},$ if $t\in K_2$. By a little abuse of notation, we write $\sigma (a(t)) := \sigma(t,a)$ ($(t,a)\in K\times C(K)$).\smallskip

Our next proposition is a generalization of \cite[Theorem 3.1]{FangWang06} for complex-valued functions.

\begin{proposition}\label{p evaluation sigma deltat} Let $\Delta : S(C(K))\to S(X)$ be a surjective isometry, where $K$ is a Stonean space and $X$ is a complex Banach space. Then for each $t_0\in K$ and each $\varphi\in \hbox{supp} (t_0,1)$ the identity $$\varphi\Delta(a) = \sigma(t_0,a)= \sigma(a(t_0)),$$ holds for every $a\in S(C(K))$.
\end{proposition}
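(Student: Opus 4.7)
The plan is to verify the identity first on the dense class of algebraic elements of $S(C(K))$ and then extend by continuity. The engine is Corollary \ref{c gross identity}, which expands $\Delta$ on an algebraic element as a linear combination of its values on the constituent partial isometries, together with Corollary \ref{c characteristic of clopen sets 2 a}, which describes how $\Delta$ twists scalars according to the dichotomy $K = K_1\cup K_2$.

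Fix $t_0\in K$ and $\varphi\in\hbox{supp}(t_0,1)$. The map $a\mapsto\varphi\Delta(a)$ is continuous on $S(C(K))$ since $\Delta$ is an isometry and $\|\varphi\|=1$; the map $a\mapsto\sigma(a(t_0))$ is either point evaluation at $t_0$ or its complex conjugate, hence also continuous. Because elements of $C(K)$ can be uniformly approximated by finite linear combinations of mutually orthogonal characteristic functions of clopen sets (\cite[Proposition 1.3.1]{S}), after refining the partition so that each clopen set is contained entirely in $K_1$ or in $K_2$, it suffices to verify the identity for algebraic elements of the form $\widehat{a}=\sum_{j=1}^{n}\alpha_j\odot v_j$, where $v_j=\gamma_j\chi_{_{A_j}}$, the $\gamma_j$ lie in $\mathbb{T}$, the $A_j$ are pairwise disjoint non-empty clopen sets each contained entirely in $K_1$ or in $K_2$, and $\max_j|\alpha_j|=\|\widehat{a}\|=1$.

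Corollary \ref{c gross identity} then gives $\Delta(\widehat{a})=\sum_{j=1}^{n}\alpha_j\Delta(v_j)$, so $\varphi\Delta(\widehat{a})=\sum_{j=1}^{n}\alpha_j\,\varphi\Delta(v_j)$. Since $v_j(t_0)=0$ whenever $t_0\notin A_j$, Proposition \ref{p kernel of the support} forces $\varphi\Delta(v_j)=0$ for every such $j$. Consequently, if $t_0\notin\bigcup_j A_j$, then $\varphi\Delta(\widehat{a})=0=\widehat{a}(t_0)=\sigma(\widehat{a}(t_0))$. Otherwise there is a unique index $j_0$ with $t_0\in A_{j_0}$; from $\chi_{_{A_{j_0}}}\in A(t_0,1)$ and the very definition of $\hbox{supp}(t_0,1)$, we get $\varphi\Delta(\chi_{_{A_{j_0}}})=1$. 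Corollary \ref{c characteristic of clopen sets 2 a} then gives $\Delta(v_{j_0})=\gamma_{j_0}\Delta(\chi_{_{A_{j_0}}})$ if $A_{j_0}\subset K_1$, and $\Delta(v_{j_0})=\overline{\gamma_{j_0}}\Delta(\chi_{_{A_{j_0}}})$ if $A_{j_0}\subset K_2$. In the first case $\varphi\Delta(\widehat{a})=\alpha_{j_0}\gamma_{j_0}=\widehat{a}(t_0)=\sigma(\widehat{a}(t_0))$; in the second case $\varphi\Delta(\widehat{a})=\alpha_{j_0}\overline{\gamma_{j_0}}=\overline{\overline{\alpha_{j_0}}\gamma_{j_0}}=\overline{\widehat{a}(t_0)}=\sigma(\widehat{a}(t_0))$.

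The identity therefore holds on a dense subset of $S(C(K))$ and, by the continuity of both sides, on all of $S(C(K))$. The main point to keep track of is how the $\odot$-product encodes the conjugation in $K_2$: the same coefficient $\alpha_{j_0}$ yields $\widehat{a}(t_0)=\alpha_{j_0}\gamma_{j_0}$ when $t_0\in K_1$ but $\widehat{a}(t_0)=\overline{\alpha_{j_0}}\gamma_{j_0}$ when $t_0\in K_2$, and this bookkeeping is exactly what forces the complex conjugation to appear on the right-hand side in the $K_2$ case. Everything else is a direct application of the preceding structural results.
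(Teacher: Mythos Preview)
Your proof is correct and follows essentially the same route as the paper's: approximate by algebraic elements, apply Corollary \ref{c gross identity} to expand $\Delta(\widehat{a})$, and then use continuity. Where the paper simply asserts ``it is easy to check'' that $\varphi\Delta(\widehat{a})=\sigma(\widehat{a}(t_0))$, you carry out this verification explicitly using Proposition \ref{p kernel of the support} and Corollary \ref{c characteristic of clopen sets 2 a}, and your bookkeeping of the $\odot$-product in the $K_2$ case is exactly what is needed.
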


\begin{proof} As in the proof of Theorem \ref{t Tingley CK}, every $a\in S(C(K))$ can be approximated in norm by an algebraic element of the form $$\widehat{a} = \sum_{j=1}^{n}\alpha_j \odot v_j +  \sum_{k=1}^{m} \beta_k \odot w_k= \sum_{j=1}^{n}\alpha_j  v_j +  \sum_{k=1}^{m} \beta_k \odot w_k,$$ where $v_1,\ldots, v_n$ and $w_1,\ldots, w_m$ are mutually orthogonal non-zero algebraic partial isometries in $C(K_1)$ and $C(K_2)$, respectively, $\alpha_1,\ldots, \alpha_n,$ $\beta_1,\ldots, \beta_m\in\mathbb{C}\backslash\{0\}$ with $\max\{|\alpha_j| : j\in\{1,\ldots,n\}\} \vee \max\{|\beta_k| : k\in\{1,\ldots,m\}\}=1$. Corollary \ref{c gross identity} implies that $$\Delta(\widehat{a}) = \sum_{j=1}^{n}\alpha_j  \Delta(v_j) +  \sum_{k=1}^{m} \beta_k \Delta(w_k).$$ It is easy to check that for $t_0\in K$ and $\varphi\in \hbox{supp} (t_0,1)$ we have $$\varphi\Delta(\widehat{a}) = \sum_{j=1}^{n}\alpha_j  \varphi \Delta(v_j) +  \sum_{k=1}^{m} \beta_k \varphi \Delta(w_k) =\sigma(t_0,\widehat{a}) = \sigma(\widehat{a}(t_0)).$$ We can easily deduce from the continuity of $\Delta$ and $\sigma$, and the norm density commented above, that $\varphi\Delta(a) = \sigma(t_0,a)= \sigma(a(t_0))$.
\end{proof}

\begin{proof}[Alternative proof to the final conclusion in Theorem \ref{t Tingley CK}.] In hypotheses of this theorem, let $\Delta : S(C(K))\to S(X)$ be a surjective isometry. By Proposition \ref{p evaluation sigma deltat}, for each $t_0\in K$ and each $\varphi\in \hbox{supp} (t_0,1)$ the identity $$\varphi\Delta(a) = \sigma(t_0,a)= \sigma(a(t_0)),$$ holds for every $a\in S(C(K))$, equivalently, $$\varphi (x) = \sigma(t_0,\Delta^{-1}(x))= \sigma(\Delta^{-1} (x) (t_0)),$$ for every $x\in S(X)$. Let us pick $x, y \in S(X)$, $\lambda > 0$ and $\varphi_t\in \hbox{supp} (t,1)$. Since $$\| \Delta^{-1} (y) -\lambda \Delta^{-1} (x) \| = \max_{t\in K} |\Delta^{-1} (y) (t) -\lambda \Delta^{-1} (x) (t) |$$ $$= \max_{t\in K_1} |\Delta^{-1} (y) (t) -\lambda \Delta^{-1} (x) (t) |\vee \max_{t\in K_2} |\Delta^{-1} (y) (t) -\lambda \Delta^{-1} (x) (t) |$$ $$= \max_{t\in K_1} |\sigma(\Delta^{-1} (y) (t)) -\lambda \sigma(\Delta^{-1} (x) (t)) |\vee \max_{t\in K_2} |\sigma(\Delta^{-1} (y) (t)) -\lambda \sigma(\Delta^{-1} (x) (t)) | $$ $$= \max_{t\in K_1} |\varphi_t(y) -\lambda \varphi_t (x) |\vee \max_{t\in K_2} |\varphi_t(y) -\lambda \varphi_t (x)  | \leq \| y-\lambda x \|,$$ we conclude from \ref{l 2.1 FangWang} (see \cite[Lemma 2.1]{FangWang06}) that $\Delta^{-1}: S(X) \to S(C(K))$ admits a unique extension to a surjective real isometry from $X$ to $C(K)$. The rest is clear.
\end{proof}

We have commented at the introduction that for any $\sigma$-finite measure space $(\Omega,\mu)$, the complex space, $L^{\infty} (\Omega,\mu),$ of all complex-valued measurable essentially bounded functions equipped with the essential supremum norm, is a commutative von Neumann algebra, and thus from the metric point of view of Functional Analysis, the commutative von Neumann algebra $L^{\infty} (\Omega,\mu)$ is (C$^*$-isomorphic) isometrically equivalent to some $C(K),$ where $K$ is a hyper-Stonean space. Consequently, the next result, which is an extension of a theorem due to D. Tan \cite{Ta:8} to complex-valued functions, is a corollary of our previous Theorem \ref{t Tingley CK}.

\begin{theorem}\label{t Tingley Tan} Let $(\Omega,\mu)$ be a $\sigma$-finite measure space, and let $X$ be a complex Banach space. Suppose $\Delta: S(L^{\infty} (\Omega,\mu))\to S(X)$ is a surjective isometry. Then there exists a surjective real linear isometry $T: L^{\infty} (\Omega,\mu)\to X$ whose restriction to $S(L^{\infty} (\Omega,\mu))$ is $\Delta$.  $\hfill\Box$
\end{theorem}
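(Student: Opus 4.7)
The plan is to deduce this result as an immediate transfer of Theorem~\ref{t Tingley CK} via the well-known Gelfand-type representation of commutative von Neumann algebras. Since $(\Omega,\mu)$ is $\sigma$-finite, the measure space is localizable, and the Banach space $L^{\infty}(\Omega,\mu)$, endowed with its natural C$^*$-algebra structure, is a commutative von Neumann algebra. By \cite[Proposition 1.18.1]{S} together with the fact that $C(K)$ is a dual Banach space (i.e.\ a von Neumann algebra) precisely when $K$ is hyper-Stonean (cf.\ \cite{Dix51} and \cite[Definition 1.14]{Tak}), there exist a hyper-Stonean compact Hausdorff space $K$ and a surjective isometric $*$-isomorphism $\Phi : L^{\infty}(\Omega,\mu)\to C(K)$. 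In particular, $\Phi$ restricts to a surjective isometry from $S(L^{\infty}(\Omega,\mu))$ onto $S(C(K))$.

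Given the surjective isometry $\Delta : S(L^{\infty}(\Omega,\mu))\to S(X)$, the plan is then to consider the composition
$$\widetilde{\Delta} := \Delta\circ \Phi^{-1}: S(C(K))\to S(X),$$
which is again a surjective isometry. Since every hyper-Stonean space is Stonean, Theorem~\ref{t Tingley CK} applies to $\widetilde{\Delta}$ and provides a surjective real linear isometry $\widetilde{T}: C(K)\to X$ extending $\widetilde{\Delta}$. The desired extension of $\Delta$ is then obtained by undoing the identification, namely
$$T := \widetilde{T}\circ \Phi : L^{\infty}(\Omega,\mu)\to X.$$
As a composition of a real linear surjective isometry with a complex linear surjective isometry, $T$ is itself a surjective real linear isometry, and by construction $T|_{S(L^{\infty}(\Omega,\mu))} = \widetilde{T}\circ \Phi|_{S(L^{\infty}(\Omega,\mu))} = \widetilde{\Delta}\circ \Phi = \Delta$.

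There is essentially no obstacle here beyond having Theorem~\ref{t Tingley CK} available; the entire content of the statement is packaged into the structural identification of $L^{\infty}(\Omega,\mu)$ with $C(K)$ for $K$ hyper-Stonean. The only point worth double-checking is that the identification $\Phi$ supplied by the structure theorem is genuinely an isometry (not merely a C$^*$-isomorphism), but this is automatic since $*$-isomorphisms of C$^*$-algebras are isometric. Hence Theorem~\ref{t Tingley Tan} is a direct corollary of Theorem~\ref{t Tingley CK}.
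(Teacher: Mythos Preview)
Your proposal is correct and follows exactly the same approach as the paper: the paper states this theorem as an immediate corollary of Theorem~\ref{t Tingley CK}, observing just before the statement that $L^{\infty}(\Omega,\mu)$ is a commutative von Neumann algebra and hence isometrically C$^*$-isomorphic to $C(K)$ for some hyper-Stonean $K$. Your write-up simply makes explicit the transfer via $\Phi$ that the paper leaves implicit in the $\Box$.
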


\begin{remark}\label{remark extreme points}{\rm The celebrated Mazur-Ulam theorem assures that every surjective isometry $F$ between two real normed spaces $X$ and $Y$ is an affine function. P. Mankiewicz established an amazing generalization of the Mazur-Ulam theorem by showing that every bijective isometry between convex sets in normed linear spaces with non-empty interiors, admits a unique extension to a bijective affine isometry between the corresponding spaces (see \cite[Theorem 5 and Remark 7]{Mank1972}). Tingley's problem asks if every surjective isometry between the unit spheres of two normed spaces admits an extension to a surjective real linear isometry between the spaces. Tingley's problem remains open for general Banach spaces. We have survey some positive solutions to Tingley's problem in the introduction. The reader could feel tempted to ask if the unit spheres can be replaced by a strictly smaller set. In some operator algebras the unit spheres have been successfully replaced by the spheres of positive operators (see \cite{MolTim2003,MolNag2012,Nagy2013,Nagy2017} and \cite{Per2017}).\smallskip

Let $\partial_e(\mathcal{B}_X)$ denote the set of all extreme points of the closed unit ball, $\mathcal{B}_X$, of a Banach space $X$. The set $\partial_e(\mathcal{B}_X)$ seems to be an appropriate candidate to replace the unit sphere of $X$. However, the answer under these weak conditions is not always positive. Consider, for example, the real Banach space $X=\RR \oplus_\infty \RR.$ It is easy to check that $\partial_e (\mathcal{B}_X)=\{ p_1=(1,1),p_2=(1,-1),p_3=(-1,1),p_4=(-1,-1) \}$, with $d(p_i,p_j)= \|p_i-p_j\|=2(1-\delta_{i,j}),$ for every $i,j\in\{1,\ldots,4\}$. We can establish a surjective isometry $\Delta:\partial_e (\mathcal{B}_X)\to \partial_e (\mathcal{B}_X) $ defined by
$$\Delta (p_1)= p_2,\ \Delta(p_2)=p_3, \ \Delta(p_3)=p_4, \hbox{ and } \Delta(p_4)=p_1.$$
If we could find an extension of $\Delta$ to a surjective real linear isometry $T: X\to X$, then there would exist a real matrix satisfying $
T = \left(
\begin{array}{rl}
a & b \\
c & d
\end{array}
\right).$ However, by assumptions $T(p_1)=p_2 \Rightarrow a+b=1$ and $T(p_4)=p_1 \Rightarrow -a-b=1,$ which is impossible.\smallskip

After exhibiting the previous counterexample, we provide a list of examples where the previous Tingley's problem for extreme points admits a positive answer. If $H$ and $K$ are Hilbert spaces, we know well that $\partial_e(\mathcal{B}_H) = S(H)$ and $\partial_e(\mathcal{B}_K)= S(K).$ So, in this setting the set of extreme points coincides with the whole unit sphere. G.G. Ding proves in \cite[Theorem 2.2]{Ding2002} that every surjective isometry $$\Delta: \partial_e(\mathcal{B}_H) = S(H)\to \partial_e(\mathcal{B}_K)= S(K)$$ admits an extension to a surjective real linear isometry from $H$ onto $K$.\smallskip

A similar example can be given in another context. Let $C_p(H)$ be the space of $p$-Schatten von Neumann operators on a complex Hilbert space $H$ equipped with its natural norm $\|a\|_p^p :=\hbox{tr}(|a|^p)$. It is known that $C_p(H)$ is uniformly convex (and hence strictly convex) for every $1<p<\infty$ {\rm(}compare the Clarkson-McCarthy inequalities \cite{McCarthy67}{\rm)}. In particular, $\partial_e(\mathcal{B}_{C_p(H)})= S(C_p(H)).$ A very recent theorem assures that for $2<p<\infty$, every surjective isometry $$\Delta: \partial_e(\mathcal{B}_{C_p(H)}) = S(C_p(H))\to \partial_e(\mathcal{B}_{C_p(H)})= S(C_p(H))$$ can be uniquely extended to a surjective real linear isometry on $C_p(H)$ (see \cite[Theorem 2.15]{FerJorPer2018}).\smallskip

We can also present an example of different nature. It is well known that in a finite von Neumann algebra $M$, the set of all extreme points of its closed unit ball is precisely the set $\mathcal{U}_{_M}$ of all unitary operators in $M$ (see \cite{ChoKijNak69,Mil1964,Sin67}). An outstanding theorem due to M. Hatori and L. Moln{\'a}r establishes that every surjective isometry between the unitary groups of two von Neumann algebras can be extended to a surjective real linear isometry between the corresponding von Neumann algebras (compare \cite[Corollary 3]{HatMol2014}). Consequently, if $N_1$ and $N_2$ are finite von Neumann algebras (we could consider $N_1=N_2= \mathbb{C}\oplus^{\infty}\mathbb{C}$ or $N_1= N_2= M_n(\mathbb{C})$, and many other examples), every surjective isometry $$\Delta: \partial_e(\mathcal{B}_{_{N_1}}) = \mathcal{U}_{_{N_1}}\to \partial_e(\mathcal{B}_{_{N_2}}) = \mathcal{U}_{_{N_2}}$$ can be uniquely extended to a surjective real linear isometry $T: N_1 \to N_2$.
}
\end{remark}

\medskip\medskip

\textbf{Acknowledgements} Authors partially supported by the Spanish Ministry of Economy and Competitiveness (MINECO) and European Regional Development Fund project no. MTM2014-58984-P and Junta de Andaluc\'{\i}a grant FQM375.\smallskip

\end{document}